\newtcolorbox{sgd}[2][]
{
  breakable,
  colframe = gray!50,
  colback  = gray!10,
  coltitle = gray!10!black,
  before skip = 10pt,
  after skip = 10pt,
  title    = \textbf{#2.},
  #1,
}
\newtcolorbox{pca}[2][]
{
  breakable,
  colframe = red!50,
  colback  = red!10,
  coltitle = red!10!black,
  before skip = 10pt,
  after skip = 10pt,
  title    = \textbf{Example: Local convergence for streaming $k$-PCA #2},
  #1,
}
\newif\ifdraft
\numberwithin{equation}{section}
\declaretheoremstyle[bodyfont=\it,qed=\qedsymbol]{noproofstyle}
\declaretheorem[name=Observation,numbered=no]{observation*}
\declaretheorem[numberlike=equation]{theorem}
\declaretheorem[name=Theorem,numbered=no]{theorem*}
\declaretheorem[numberlike=equation]{lemma}
\declaretheorem[name=Lemma,numbered=no]{lemma*}
\declaretheorem[name=Corollary,numbered=no]{corollary*}
\declaretheorem[name=Proposition,numbered=no]{proposition*}
\declaretheorem[name=Claim,numbered=no]{claim*}
\declaretheorem[name=Conjecture,numbered=no]{conjecture*}
\declaretheorem[name=Question,numbered=no]{question*}
\declaretheoremstyle[bodyfont=\it]{defstyle} 
\declaretheorem[numberlike=equation,style=defstyle]{definition}
\declaretheorem[unnumbered,name=Definition,style=defstyle]{definition*}
\declaretheorem[unnumbered,name=Example,style=defstyle]{example*}
\declaretheorem[unnumbered,name=Notation=defstyle]{notation*}
\declaretheorem[unnumbered,name=Construction,style=defstyle]{construction*}
\declaretheoremstyle[]{rmkstyle}
\newcommand{\N}{\mathbb{N}}
\newcommand{\Nz}{\mathbb{N}_{\geq0}}
\newcommand{\Real}{\mathbb{R}}
\newcommand{\Exp}{\mathbb{E}}
\newcommand{\Var}{\mathsf{Var}}
\newcommand{\gap}{\textsf{gap}}
\newcommand{\diag}{\textsf{diag}}
\newcommand{\tr}{\textsf{tr}}
\newcommand{\bg}{\mathbf{g}}
\newcommand{\bw}{\mathbf{w}}
\newcommand{\bx}{\mathbf{x}}
\newcommand{\cD}{\mathcal{D}}
\newcommand{\cF}{\mathcal{F}}
\newcommand{\cW}{\mathcal{W}}
\DeclarePairedDelimiter\ceil{\lceil}{\rceil}
\def\moverlay{\mathpalette\mov@rlay}
\def\mov@rlay#1#2{\leavevmode\vtop{%
   \baselineskip\z@skip \lineskiplimit-\maxdimen
   \ialign{\hfil$\m@th#1##$\hfil\cr#2\crcr}}}
\newcommand{\charfusion}[3][\mathord]{
    #1{\ifx#1\mathop\vphantom{#2}\fi
        \mathpalette\mov@rlay{#2\cr#3}
      }
    \ifx#1\mathop\expandafter\displaylimits\fi}
\author{
Chi-Ning Chou\thanks{School of Engineering and Applied Sciences, Harvard University, Cambridge, Massachusetts, USA. Supported by NSF awards CCF 1565264 and CNS 1618026. Email: \texttt{chiningchou@g.harvard.edu}.}
\and Juspreet Singh Sandhu\thanks{School of Engineering and Applied Sciences, Harvard University, Cambridge, Massachusetts, USA. Supported by DARPA ONISQ program award HR001120C0068. Email: \texttt{jus065@g.harvard.edu}.}
\and Mien Brabeeba Wang\thanks{MIT CSAIL, Cambridge, Massachusetts, USA. Supported by NSF Awards CCF-1810758, CCF-0939370, CCF-1461559 and Akamai Presidential Fellowship. Email: \texttt{brabeeba@mit.edu}.}
\and Tiancheng Yu\thanks{MIT LIDS, Cambridge, Massachusetts, USA. Supported by NSF BIGDATA grant 1741341. Email: \texttt{yutc@mit.edu}.}}
\title{A General Framework for Analyzing Stochastic Dynamics in Learning Algorithms}
\begin{document}
\maketitle
\begin{abstract}
One of the challenges in analyzing learning algorithms is the circular entanglement between the objective value and the stochastic noise. This is also known as the ``chicken and egg'' phenomenon and traditionally, there is no principled way to tackle this issue. People solve the problem by utilizing the special structure of the dynamic, and hence the analysis would be difficult to generalize.

In this work, we present a streamlined three-step recipe to tackle the ``chicken and egg'' problem and give a general framework for analyzing stochastic dynamics in learning algorithms. Our framework composes standard techniques from probability theory, such as stopping time and martingale concentration. We demonstrate the power and flexibility of our framework by giving a unifying analysis for three very different learning problems with the last iterate and the strong uniform high probability convergence guarantee. The problems are stochastic gradient descent for strongly convex functions, streaming principal component analysis, and linear bandit with stochastic gradient descent updates. We either improve or match the state-of-the-art bounds on all three dynamics.
\end{abstract}
\thispagestyle{empty}

\newpage
\setcounter{tocdepth}{2}
\tableofcontents
\thispagestyle{empty}

\newpage
\clearpage
\pagenumbering{arabic} 
\section{Introduction}\label{sec:intro}
Iterative methods are widely used in machine learning and stochastic optimization where the objective functions naturally induce \textit{stochastic processes}. For example, when an algorithm uses stochastic gradient descent (SGD) updates, the value of the loss function forms a stochastic process. Therefore, to study the performance of a learning algorithm, it usually suffices to understand the \textit{behavior} of the corresponding stochastic process.

There have been many successes in providing theoretical guarantees for various learning algorithms. However, as the learning algorithms nowadays become increasingly complicated, it is more and more challenging to conduct clean and tight theoretical analysis. Moreover, due to the lack of general principles for analysis, the existing theoretical studies are usually tailored to specific learning dynamics and hence are difficult to extend to other problems. One main challenge of the analysis, sometimes known as a \textit{``chicken and egg problem''}~\cite{harvey2019tight}, originates from the circular relationship between the improvement of the process and the historical stochasticity. This problem is ubiquitous and often causes the analysis to be complicated, ad hoc, and sub-optimal (see~\autoref{sec:running chicken and egg} and~\autoref{sec:SGD comparison}).

In this work, we propose a framework with an attempt to address the lack of unifying theoretical analysis for stochastic processes and the chicken and egg problem in learning algorithms. Before introducing our framework and comparing it with the previous works, let us start with a warm-up puzzle to emphasize the subtlety of analyzing stochastic processes.

\subsection{A warm-up puzzle about conditional expectation}
Consider a dice with $6$ faces, what is the expected number of times you need to roll this die before you get the number $1$? It's not hard to see that the expected number would be $6$. Now, let us slightly change the question by asking what is the expected number of times you need to roll this die before you get the number $1$
\textit{conditioned} on the event that you have only seen odd number in the process? Most people's first reaction would be $3$ because it seems that there are only $3$ possible numbers left after the conditioning, however, the answer is actually $3/2$. 

The reason why $3$ is not the correct answer is because the most natural way to calculate the conditional expectation is wrong. In particular, directly writing down an expression conditioning on \textit{a probability event} would mislead one to condition on a wrong probability space. 
It turns out that a more principle way to reasoning about conditioning (in stochastic processes) is to properly utilize the corresponding \textit{stopping time}. In fact, this is one of the main messages we want to convey in this paper: ``stopping time and stopped process elegantly reveal the structure of a probability space after conditioning''. This will be clearer after we define these concepts formally and apply them in the framework. Also, see~\autoref{app:puzzle} for more details on this puzzle.

Next, let us set up a running example for illustrating the chicken and egg problem and our framework.

\subsection{A running example: SGD for strongly convex functions}\label{sec:running example}
We pick stochastic gradient descent (SGD) for strongly convex functions as a running example because it is one of the simplest (but non-trivial) and most common methods in machine learning and optimization. While being well-studied, tight high-probability convergence guarantees of SGD for strongly convex functions turn out to be more complicated than expected. In particular, the previous analysis~\cite{rakhlin2012making,harvey2019tight,jain2019making} for the two common convergence types are quite different and specialized tools are required. The main challenges lie in the \textit{chicken and egg problem}, which we will explain after setting up the problem in the following.

\begin{sgd}{Setup of SGD for strongly convex functions}
Let $F$ be a convex function over a convex domain $\cW$ equipped with norm $\|\cdot\|$. We say $F$ is $\lambda$-strongly convex for $\lambda>0$ if for all $\bw,\bw'\in\cW$ and a subgradient $\bg$ at $\bw$,
\[
F(\bw')\geq F(\bw)+\bg^\top(\bw'-\bw)+\frac{\lambda}{2}\|\bw'-\bw\|^2 \, .
\]
Since $F$ is $\lambda$-strongly convex, there exists a unique $\bw^*$ that attains the minimum value of $F$.
For $G > 0$, a $G$-bounded stochastic gradient oracle on input $\bw$ returns a random $\hat{\bg}$, such that $\Exp[\hat{\bg}]$ is a subgradient of $F$ at $\bw$ and $\|\hat{\bg}\|\leq G$ almost surely. 

\vspace{0.5cm}
\begin{algorithm}[H] 
	\caption{SGD for strongly convex function}\label{alg:SGD for strongly convex}
	    \textbf{Input:} Time parameter $T\in\N$ and step size parameters $\{\eta_t\}_{t\in\N}$.\\
	    \textbf{Initialize:} $\bw_0\in\cW$.\\
	    \For{$t=1,\dots,T$}{
	    Query the stochastic gradient oracle of $F$ at $\bw_{t-1}$ and get $\hat{\bg}_t$.\\
	    Let $\bw_t=\textsf{Proj}_{\cW}(\bw_{t-1}-\eta_t\hat{\bg}_t)$ where $\textsf{Proj}_{\cW}$ is an orthogonal projection operator for $\cW$.}
	    \textbf{Output:} $\bw_T$.
\end{algorithm}
\vspace{0.5cm}
The SGD algorithm (see~\autoref{alg:SGD for strongly convex}) minimizes $F$ over $\cW$ by maintaining a vector $\bw_t$ at time $t$ and updating it with a stochastic gradient oracle weighted by a tunnable learning rate parameter $\eta_t$. The goal of the algorithm is to have $\bw_t\rightarrow\bw^*$ as $t\rightarrow\infty$.

The objective function for analyzing the SGD algorithm is naturally defined as $X_t := \|\bw_t-\bw^*\|^2$ and we would like to show that $X_t$ converges to $0$ with high probability. 
\end{sgd}

In the previous work, there are two types of high probability convergence guarantee that are of interest: the last iterate convergence and the strong uniform convergence. While Harvey et al.~\cite{harvey2019tight} and Rakhlin et al.~\cite{rakhlin2012making} proved either optimal or nearly optimal convergence rate for the two guarantees respectively, their analysis relied on specialized techniques and turned out to be quite different (see~\autoref{sec:SGD comparison} for a detailed comparison). In this work, we apply our framework and give a simple and unifying proof for the two convergence guarantees.

\begin{theorem}[Convergence of SGD for strongly convex functions]\label{thm:sgd intro}
Consider the above setting with learning rate $\eta_t=1/(\lambda t)$ for every $t\in\N$. For every $\delta>0$, we have
\begin{itemize}
\setlength\itemsep{0mm}
\item (last iterate convergence) $\Pr[X_T>\frac{500G^2\log(1/\delta)}{\lambda^2\cdot T}]<\delta$ for every $T\in\N$ and
\item (strong uniform convergence) $\Pr[\exists t\in\N,\ X_t>\frac{1000G^2(\log(1/\delta)+\log\log(t+1))}{\lambda^2\cdot t}]<\delta$.
\end{itemize}
\end{theorem}

For the simplicity of exposition, we do not optimize the constants in the analysis and focus on the last iterate convergence with parameters $\lambda=G=1$ in the rest of the introduction. Also, we focus on the scaling in the convergence rate and hence the constants are not optimized and are rounded to nice-looking integers.

\subsection{The chicken and egg problem}\label{sec:running chicken and egg}
In this paper, the chicken and egg problem refers to the situation where the improvement of the process is entangled with historical stochasticity. This often causes complications in the analysis and people usually developed specialized tools to tackle the specific problems they are interested in. To be more concrete, let us take a look at the chicken and egg problem in our running example. We start with an expectation analysis that shows $\Exp[X_T]=O(1/T)$.

\begin{sgd}{Expectation analysis for SGD}
Recall that the goal is to show that $X_T$ converges to $0$ with high probability. Due to the iterative nature of the process, the usual first step of the analysis is to find a recursion for $X_T$, i.e., upper bound $X_T$ with a function of $X_{t}$ for some $t<T$. In the running example, it is standard to rewrite the dynamic as follows. For every $t\geq3$,
\begin{align}
X_t&=\|\bw_t-\bw^*\|^2=\|\textsf{Proj}_{\cW}(\bw_{t-1}-\hat{\bg}_t/t)-\bw^*\|^2 \nonumber \leq \|\bw_{t-1}-\hat{\bg}_t/t-\bw^*\|^2 \nonumber\\
&=\left(1-\frac{2}{t}\right)\cdot X_{t-1}+\frac{2\left(X_{t-1}-\hat{\bg}_t^\top(\bw_{t-1}-\bw^*)\right)}{t}+\frac{\|\hat{\bg}_t\|^2}{t^2} \, . \label{eq:sgd recursion}
\end{align}
By taking expectation on both sides, for every $T\geq 3$, we have
\begin{align}
\Exp[X_T]&\leq\Exp\left[\left(1-\frac{2}{T}\right)\cdot X_{T-1}+\frac{2\left(X_{T-1}-\hat{\bg}_T^\top(\bw_{T-1}-\bw^*)\right)}{T}+\frac{\|\hat{\bg}_T\|^2}{T^2}\right] \label{eq:running example 1}\\
&\leq \left(1-\frac{2}{T}\right)\cdot\Exp[X_{T-1}]+\frac{1}{T^2} \nonumber\leq \frac{2}{T(T-1)}\cdot\Exp[X_2] + \sum_{t'=3}^T \frac{t'(t'-1)}{T(T-1)}\cdot\frac{1}{t'^2}=O\left(\frac{1}{T}\right) \nonumber
\end{align}
where the second inequality uses the strong convexity and the last approximation uses the boundedness property of the SGD oracle\footnote{A $G$-bounded SGD oracle for $\lambda$-strongly convex function guarantees $X_t\leq4G^2/\lambda^2$ for all $t\in\N$.}.
\end{sgd}

The chicken and egg problem shows up when we move on to proving high probability convergence guarantees.

\begin{sgd}{The chicken and egg problem in SGD}
Note that the second term in the right hand side of~\autoref{eq:running example 1} no longer disappears as it did in the expectation analysis. Nevertheless, we can still mimic the analysis by letting $M_{t}=\sum_{t'=3}^tt'(t'-1)\cdot(2\left(X_{t'-1}-\hat{\bg}_{t'}^\top(\bw_{t'-1}-\bw^*)\right)/t'+\|\hat{\bg}_{t'}\|^2/t'^2)$ for every $t\geq 3$ and obtain
\begin{equation}\label{eq:running example 2}
X_t \leq \frac{2}{t(t-1)}\cdot\left(X_{2} + M_{t}\right) \, ,\ \forall t\geq3 \, .
\end{equation}
To show $X_T=O(1/T)$ with high probability, by~\autoref{eq:running example 2} it suffices to show that $M_T=O(T)$ with high probability. However, to prove such bound for $M_T$, we also need to have a tight control on $X_{t}$ for every $t<T$ because the stochasticity in $M_T$ depends on $X_{t}$. In particular, a direct application of concentration inequality (e.g., Lemma~\ref{lem:martingale concentration}) requires $X_{t}=O(1/t)$.
Meanwhile, to properly analyze $X_{t}$, we again need to understand $M_{t}$ and this brings us back to where we were in the beginning. We tokenize such causality dilemma between showing $X_t=O(1/t)$ and bounding $M_t=O(t)$ as the chicken and egg problem in analyzing SGD for strongly convex functions. See~\autoref{fig:cgp in SGD} for a pictorial explanation. 

\begin{figure}[H]
    \centering
    \includegraphics[width=6cm]{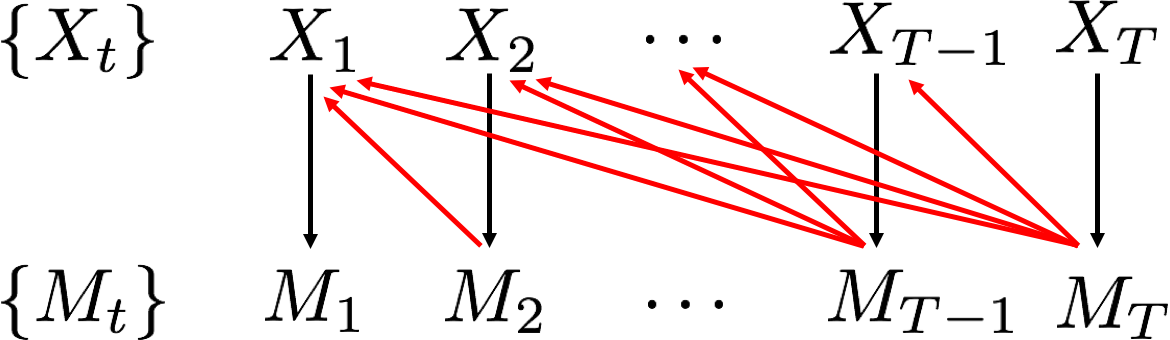}
    \caption{The chicken and egg problem in SGD. An arrow from $A$ to $B$ means that proving an upper bound for $A$ requires an upper bound for $B$.}
    \label{fig:cgp in SGD}
\end{figure}
\end{sgd}

\paragraph{Previous approach to resolve the chicken and egg problem.}
To overcome the ``chicken and egg'' problem, previous analysis usually either (i) only looks at the expectation and loses important local information (e.g.,~\cite{chen2019convergence,qiao2019rates,yu2019double,zou2019sufficient}), (ii) directly analyzes the process step-by-step which can cause overcomplication (e.g.,~\cite{AL17,allen2019convergence,richards2019optimal}), or (iii) focuses on a specific problem setting and hence is not easy to generalize.

\subsection{Tackling the chicken and egg problem using stopping time}

We develop a handy tool that systematically tackles the chicken and egg problem. The key idea is to design a \textit{stopping time} $\tau$ with the advantage that the stopped process\footnote{The notation $\cdot\wedge\cdot$ stands for taking the minimum of the two sides, i.e., $X_{t\wedge\tau}=\mathbf{1}_{t<\tau}X_t+\mathbf{1}_{t\geq\tau}X_{\tau}$.} $\{X_{t\wedge\tau}\}$. Mathematically, the stopped process $\{X_{t\wedge\tau}\}$ simulates the original process $\{X_t\}$ but stops at a certain value whenever some probability events happen. See~\autoref{sec:prelim} for a formal definition. For now, intuitively we use a stopped process to capture the typical behaviors of the original process $\{X_t\}$ and rule out the undesirable atypical events.
As a consequence, we can prove tighter bounds on the moment information of the stopped process and thus get a better convergence rate.

Now that we have shown the convergence of the stopped process, what can we say about the original process? We use a \textit{pull-out lemma} which provides a sufficient condition to extend the convergence guarantee from $\{X_{t\wedge\tau}\}$ to $\{X_t\}$ without introducing any extra factors to the convergence rate. 
Informally, the pull-out condition is of the following form:
\begin{lemma}[Informal,~\cite{CW20}]
Let $\{X_t\}_{t\in\N}$ be a stochastic process and let $\tau$ be a stopping time. For every $\delta\in(0,1)$, let $\{E_t\}_{t\in\N}$ be a sequence of probability (error) events such that $\Pr[\exists t\in\N,\ X_{t\wedge\tau}\in E_t]<\delta$. Suppose the following condition
\begin{align*}
&\text{``if the event $\{E_t\}$ does not happen up to time $t$}\Rightarrow\ \text{the stopping time $\tau$ won't stop at time $t$''}
\end{align*}
holds for every $t\in\N$, then we also have $\Pr[\exists t\in\N,\ X_t\in E_t]<\delta$.
\end{lemma}

While this may sound magical, it is actually not that surprising because this is exactly what the stopping time $\tau$ is designed for --- the stopped process typically would behave the same as the original process and hence the error probability should be upper bounded by that of the stopped process. See~\autoref{sec:framework improvement} and~\autoref{sec:SGD pull out} for more details.

\subsection{A general framework for analyzing stochastic dynamics}\label{sec:examples}

It turns out that our stopping time tool can handle problems much beyond the running example. We build up a framework to tackle the chicken and egg problems in stochastic processes and provide a recipe on how to apply it in~\autoref{sec:framework}. 
We would like to stress in advance that the technical tools in our framework are basic and it is the \textit{composition} of these tools that makes the framework powerful and flexible. 
The goal of this paper is to provide a simple recipe for future analysis on a wide-range of learning algorithms.
We instantiate the framework on two more examples and either improve or match the state-of-the-art bounds. In the following, $T$ and $\delta$ stand for the number of iterations and the failure probability respectively.

\paragraph{A textbook example: SGD for strongly convex functions.}
We start with a well-known textbook learning algorithm: \textit{stochastic gradient descent (SGD)} algorithm for (smooth) strongly convex functions. We first give an expository proof that matches the state-of-the-art $O(T^{-1}(\log\delta^{-1}+\log\log T))$ convergence rate~\cite{hazan2014beyond,rakhlin2012making} for \textit{strong uniform convergence}. To show the flexibility of the framework, we further use the same analysis with a different stopping time to achieve the optimal $O(T^{-1}\log\delta^{-1})$ convergence rate~\cite{jain2019making,harvey2019tight} for the \textit{last iterate convergence}. The previous analysis for the two convergence guarantees were different but here we can analyze the two under a unifying analysis. See~\autoref{sec:sgd} for details.

\paragraph{A non-convex example: Streaming $k$-PCA.}
Next, we move on to a classic dynamic with a non-convex structure: the \textit{streaming $k$-principle component analysis ($k$-PCA)} problem. Given i.i.d. samples from an unknown distribution, the goal is to use $O(d)$ space and output a vector that is close to the top-$k$ eigenspace of the distribution. For simplicity, we focus on the \textit{gap-dependent and local convergence setting}\footnote{There are other standard guarantees such as global convergence, gap-free convergence, exponential convergence etc. We leave it as an interesting future direction to apply our framework on these other guarantees and simplify/unify the previous proofs.}. The previous state-of-the-art analysis~\cite{AL17}\footnote{\cite{AL17} obtains $\log^5$ in their bound in the global gap-free setting. Using their analysis techniques on the local convergence would get $\log^3$ in the bound.} gets $O(T^{-1}(\log\delta^{-1}+\log^3(dT)))$ where $d$ is the dimension of the problem. We apply our framework and get $O(T^{-1}\log\delta^{-1})$ convergence rate. When $k=1$, this is the first high-probability result that matches the information-theoretic lower bound. See~~\autoref{app:pca} for details. 

\paragraph{An active learning example: Solving stochastic linear bandit with SGD update.}
Finally, we consider a problem with active dynamic where the updates are adaptive and dependant on the whole history: solving stochastic linear bandit with SGD update \cite{jun2017scalable,korda2015fast}. This problem is not only useful for designing scalable bandit algorithms but also serves as an important intermediate step towards analyzing model-free algorithms in linear parameterized Markov decision processes (MDPs)~\cite{jin2018q,jin2019provably}. We analyze an algorithm where previous technique cannot analyze and improve the state-of-the-art regret from $O(d(T\log^2 (T) \log (T/\delta))^{1/2})$ in \cite{jun2017scalable} to $O(d(T\log^2 (T) \log (1/\delta))^{1/2})$. See~\autoref{app:ql} for details.

\subsection{Related work}\label{sec:related work}
We focus on analyzing stochastic processes in learning algorithms which broadly appear in theoretical machine learning~\cite{moitra2018algorithmic,settles2009active,sutton2018reinforcement}, optimization theory~\cite{boyd2004convex,hazan2019introduction,shalev2014understanding}, statistical learning theory~\cite{hastie2009elements}, etc. There have been many beautiful results providing theoretical analysis on a wide range of important learning problems, e.g., principal component analysis~\cite{AL17}, non-negative matrix factorization~\cite{arora2016computing,lee1999learning}, topic models~\cite{arora2013practical,arora2012learning}, matrix completion~\cite{hardt2014understanding,jain2013low}, tensor decomposition~\cite{anandkumar2014tensor,ge2015escaping}, neural networks~\cite{allen2018convergence,du2018gradient,jacot2018neural}, continual learning~\cite{parisi2019continual,kirkpatrick2017overcoming}, etc. However, the lack of a unifying framework often makes the progress in analyzing frontier learning dynamics slow and sub-optimal. This paper attempts to propose a general framework for the future studies in new and complex learning dynamics.

The technical ingredients in the framework are standard, simple, and inspired by the stochastic approximation theory~\cite{Kushner_1997} and a recent analysis for streaming PCA~\cite{CW20}. For example, the stopping time technique or the martingale concentration have been widely applied in many other analysis~\cite{AL17,rakhlin2012making}. We emphasize that it is the \textit{composition} of these tools that makes our framework powerful and flexible, and the main contribution of this paper is to propose a general and streamlined recipe that future analysis of new and complex learning algorithms can easily adopt.

\subsection{Organization of the paper}
The rest of the paper is organized as follows. First, in~\autoref{sec:prelim} we introduce the relevant background knowledge in probability theory. Next, we formally explain our framework in~\autoref{sec:framework} and apply it to the example of SGD. Finally, we compile extra discussion and proofs for the other two problems in the Appendices.

\section{Preliminaries}\label{sec:prelim}
In this section, we formalize the ideas introduced in the previous section by setting up mathematical background on stochastic process in~\autoref{sec:martingale etc} and state the formal version as well as the proof of the pull-out lemma in~\autoref{sec:prelim pull out}.

\paragraph{Notations.}
We use the following notations. $\N=\{1,2,\dots\}$ and $\Nz=\{0,1,\dots\}$. $[T]=\{1,2,\dots,T\}$ and $[T_0:T_1]=\{T_0,\dots,T_1\}$ for every $T\in\N$ and $T_0<T_1\in\Nz$. We use $\{X_t\}_{t\in\Nz}$ to denote the sequence $\{X_0,X_1,\dots\}$ indexed by $\Nz$. We omit the subscript when the context is clear.

\subsection{Stochastic process, concentration inequality, and stopping time}\label{sec:martingale etc}
Stochastic processes are central objects in this paper. Here we introduce preliminary mathematical background in an intuitive language. See~\cite{L16} for a more formal exposition.

A (discrete) \textbf{stochastic process} $\{X_t\}_{t\in\Nz}$ is a sequence of real-valued random variable indexed by $\Nz$. A \textbf{filtration} $\{\cF_t\}_{t\in\Nz}$ for $\{X_t\}$ is a sequence of probability spaces and in this paper we consider $\cF_t$ to be the probability space generated by $X_0,X_1,\dots,X_t$ for every $t\in\Nz$.~\footnote{This can be formally defined via $\sigma$-algebra (see~\autoref{app:tools}). } 
One of the main technical tools for analyzing stochastic processes is the \textbf{concentration inequality}, which states that when one has a good control on the moments of $\{X_t\}$, then the deviation of $\{X_t\}$ is upper bounded with high probability. For example, the following is a variant of the seminal Freedman's inequality~\cite{freedman1975tail}.

\begin{lemma}[A variant of Freedman's inequality]\label{lem:martingale concentration}
Let $\{X_t\}_{t\in\Nz}$ be a stochastic process with filtration $\{\cF_t\}_{t\in\Nz}$. Let $T_0<T\in\Nz$ and $B,\sigma_t,\mu_t\geq0$ be some constants for all $t\in[T]$. Suppose for each $t=T_0+1,T_0+2,\dots,T$, $|X_{t}-X_{t-1}|\leq B$ almost surely, $\Var[X_{t}\, |\, \mathcal{F}_{t-1}]\leq\sigma_t^2$, and $\Exp[X_{t}-X_{t-1}\, |\, \mathcal{F}_{t-1}]\leq\mu_t$, then for every $\delta\in(0,1)$ we have
\[
\Pr\left[\exists t\in[T_0+1:T],\ X_t-X_0\geq 2\max\left\lbrace\sqrt{\sum_{t' = T_0 + 1}^{T_1}\sigma^2_{t'}\log\frac{1}{\delta}}, 2B\log\frac{1}{\delta}\right\rbrace + \sum_{t'=T_0+1}^{T_1}\mu_{t'}\right] < \delta \, .
\]
\end{lemma}
In general, it is difficult to obtain a tight bound on the moment conditions for a concentration inequality. Our framework uses a \textit{stopping time technique} to tackle this issue.
A \textbf{stopping time} $\tau$ is a random variable that takes values in $\Nz$ and is consistent\footnote{Formally, for every $t\in\Nz$, the event $\{\tau=t\}$ should be measurable in the probability space $\cF_t$.} with $\{\cF_t\}$.
For example, let $\{X_t\}$ be an adapted process, the most common stopping time for $\{X_t\}$ is of the form $\tau := \min_{X_t> \Lambda}\{t\}$ for some $\Lambda\in\Real$.
Namely, $\tau$ is the first time when $\{X_t\}$ becomes greater than $\Lambda$. For convenience, we would define the stopping time of this form by ``$\tau$ is the stopping time for $\{X_t> \Lambda\}$''.

Given an adapted process $\{X_t\}$ and a stopping time $\tau$, it is natural to consider the corresponding \textbf{stopped process} $\{X_{t\wedge\tau}\}_{t\in\Nz}$ where $t\wedge\tau=\min\{t,\tau\}$ is also a random variable. That is, $X_{t\wedge\tau}=\mathbf{1}_{\tau\geq t}X_t+\mathbf{1}_{\tau<t}X_{\tau}$ for every $t\in\Nz$. Two useful facts for the future: (i) $X_{t\wedge\tau} - X_{(t-1)\wedge\tau} = \mathbf{1}_{\tau\geq t}(X_t - X_{t-1})$ almost surely for every $t\in\N$. (ii) the stopped process of an adapted process is also adapted and hence concentration inequalities are applicable to a stopped process.

\subsection{The pull-out lemma}\label{sec:prelim pull out}
One of the key tool in our framework is the \textit{pull-out lemma} used in the improvement analysis. Here we restate the lemma and give a whole proof for the completeness of presentation.

\begin{restatable}[The pull-out lemma~\cite{CW20}]{lemma}{pulloutlemma}\label{lem:pull out}
Let $\{M_t\}_{t\in\N}$ be an adapted stochastic process and $\tau$ be a stopping time. For every $T\in\N$, $\Delta\in\Real$, and $\delta\in(0,1)$, suppose
\begin{enumerate}
\item $\Pr[\exists t\in[T],\ M_{t\wedge \tau} > \Delta]< \delta$ and
\item For every $t\in[T]$, $\Pr[\tau \geq t+1\ |\ \exists t'\in[t],\ M_{t'} \leq \Delta] = 1$.
\end{enumerate}
Then, we have
\[
\Pr[\exists t\in[T],\ M_{t} > \Delta]< \delta \, .
\]
\end{restatable}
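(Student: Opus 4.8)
The statement is really a clean logical/measure-theoretic fact, so I would prove it by a direct pointwise (sample-path) argument rather than by any further concentration. The plan is to show that the "bad" event for the stopped process and the "bad" event for the original process are \emph{equal}, after which the conclusion is immediate from hypothesis~1. Concretely, let $A = \{\exists t\in[T]:\ M_{t\wedge\tau} > \Delta\}$ and $A' = \{\exists t\in[T]:\ M_t > \Delta\}$; I claim $A' \subseteq A$, which together with $\Pr[A] < \delta$ gives $\Pr[A'] < \delta$.

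First I would unwind hypothesis~2. The statement $\Pr[\tau \geq t+1 \mid \exists t'\in[t],\ M_{t'}\leq\Delta] = 1$ is a slightly awkward way of saying: on the event $\{\exists t'\in[t],\ M_{t'}\leq\Delta\}$ we have $\tau \geq t+1$ almost surely, i.e.\ the stopping time has not fired through time $t$. (One should note this event is $\cF_t$-measurable since $\{M_t\}$ is adapted and $\tau$ is a stopping time, so the conditional statement is meaningful; if the conditioning event has probability zero the implication is vacuous on it.) Rephrasing: for each $t\in[T]$, almost surely, if $M_{t'}\leq\Delta$ for some $t'\le t$, then $\tau > t$, equivalently $t\wedge\tau = t$.

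Now I would do the sample-path containment. Fix a sample point $\omega$ in $A'$ (outside the relevant null sets), and let $s$ be the \emph{first} index in $[T]$ with $M_s(\omega) > \Delta$. If $s=1$ then $M_{1\wedge\tau}(\omega) = M_1(\omega) > \Delta$ trivially (since $1\wedge\tau \geq 1$ always, and actually hypothesis 2 is not even needed here — wait, one must be careful: $1\wedge\tau$ could be $0$ if $\tau=0$; but $M_0=0$ by the recursion normalization, and in any case we handle this by the same argument below, so let me just run the general argument). For the general case: since $s$ is the first exceedance, we have $M_{t'}(\omega)\leq\Delta$ for all $t'\in[s-1]$ — in particular, if $s\geq 2$, taking $t = s-1$, the event $\{\exists t'\in[s-1]:\ M_{t'}\leq\Delta\}$ holds at $\omega$ (e.g.\ $t'=s-1$ works), so by the rephrased hypothesis~2, $\tau(\omega) > s-1$, i.e.\ $\tau(\omega)\geq s$, hence $s\wedge\tau(\omega) = s$ and therefore $M_{s\wedge\tau}(\omega) = M_s(\omega) > \Delta$, so $\omega\in A$. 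If $s=1$, one invokes hypothesis~2 with $t=1$ and the event $\{M_1\le\Delta\}$ — but that event is false at $\omega$, so instead I would just observe $s=1$ cannot give the minimal exceedance unless $M_1(\omega)>\Delta$, and handle it by noting $\tau\ge 1$ is forced because... hmm, actually the cleanest fix is: strengthen the book-keeping by also using that on $\{M_0 = 0 \le \Delta\}$ (always true) hypothesis~2 with $t=0$ — but hypothesis~2 as stated ranges over $t\in[T]$, not $t=0$. So for $s=1$ I would argue separately that the intended reading of the lemma has $\tau\geq 1$ by convention (the process is observed starting at $t=1$), or simply note that in all applications $M_1 \le \Delta$ deterministically for the threshold parameters chosen, making $s\ge 2$; I will phrase the $s=1$ case as covered by the convention $0\wedge\tau = 0$ and $M_0 = 0 \le \Delta < M_1$ handled directly.

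\textbf{Main obstacle.} There is no real analytic difficulty — no martingale machinery, no inequality to optimize. The only thing to be careful about is the \emph{measurability and the boundary index}: making precise that hypothesis~2's conditional-probability-one statement yields the almost-sure pointwise implication "some early value $\le\Delta$ $\Rightarrow$ $\tau$ hasn't fired yet," and handling the first index $s=1$ (and the $\tau=0$ corner) gracefully given that the recursion normalizes $M_0=0$. So the write-up is: (1) restate hypothesis~2 pointwise a.s.; (2) prove $A'\subseteq A$ up to a null set via the first-exceedance-time argument; (3) conclude $\Pr[A']\le\Pr[A]<\delta$. The whole proof is a few lines once the pointwise reformulation of the second hypothesis is in hand.
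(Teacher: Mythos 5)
Your proof is correct, and it takes a somewhat different route from the paper's. The paper (in~\autoref{app:framework}) introduces the maximal process $M^*_t=\max_{0\le t'\le t}M_{t'}$ and an auxiliary stopping time $\xi$ for the event $\{M^*_{t\wedge\tau}>\Delta\}$, decomposes $\Pr[M^*_t>\Delta]$ over the three events $\{\tau\ge t\}$, $\{\tau<t,\ \xi\le\tau\}$, $\{\tau<t,\ \xi>\tau\}$, kills the last one using condition~2 (applied at the random time $t=\tau$), and rewrites the remaining two in terms of the stopped process so that condition~1 applies. You instead prove the pathwise inclusion $\{\exists t\in[T],\ M_t>\Delta\}\subseteq\{\exists t\in[T],\ M_{t\wedge\tau}>\Delta\}$ up to a null set, via the first exceedance index $s$: condition~2 at $t=s-1$ forces $\tau\ge s$, hence $M_{s\wedge\tau}=M_s>\Delta$. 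The underlying mechanism is the same (before the first exceedance, condition~2 keeps $\tau$ alive, so the stopped and unstopped processes agree at the first exceedance), but your version dispenses with the auxiliary stopping time and the case decomposition, and makes the null-set bookkeeping (a union of at most $T$ null sets coming from condition~2) explicit, which is arguably cleaner. Your hesitation over the case $s=1$ is unnecessary: since the lemma's processes are indexed by $\N$, the stopped process $M_{t\wedge\tau}$ is only meaningful under the convention $\tau\ge1$ (or $M_0=0$ adjoined, as in~\autoref{def:recursion}), and under that reading $1\wedge\tau=1$, so $M_{1\wedge\tau}=M_1>\Delta$ with no appeal to condition~2 at all; note the paper's own proof relies on the same implicit convention when it invokes condition~2 at time $t=\tau$, which requires $\tau\in[T]$.
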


For the completeness of the presentation, we provide a proof for the pull-out lemma as follows.

\begin{proof}
To simplify the notation, we use $\{M^*_t\}_{t\in\Nz}$ to denote the \textit{maximal process} of $\{M_t\}$ where $M^*_t:=\max_{0\leq t'\leq t}M_{t'}$. Note that the error event $\{\exists t'\in[t],\ M_{t'}>\Delta\}$ now becomes $\{M^*_t>\Delta\}$.

The main idea is to use an auxiliary stopping time $\xi$ for the event $\{M_{t\wedge\tau}^*>\Delta\}$ and decompose the error event $\{M_t^*>\Delta\}$ as follows. 
\begin{align*}
\Pr[M_{t}^*>\Delta]&=\Pr[M_{t}^*>\Delta,\ \tau\geq t]\\&+\Pr[M_{t}^*>\Delta,\ \tau<t,\ \xi\leq\tau]+\Pr[M_{t}^*>\Delta,\ \tau<t,\ \xi>\tau] \, .
\intertext{First, observe that the third term is zero because $\Pr[\tau<t,\ \xi>\tau]=0$. To see this, note that when $\tau<t$, we have $\tau=t\wedge\tau$ and hence $M^*_{t\wedge\tau}=M^*_\tau$. Also, when $\xi>\tau$, we have $M^*_\tau\leq\Delta$ (by the definition of $\xi$). Thus, by the second condition of the lemma statement, we have $\tau\geq\tau+1$, which is a contradiction. So the equation becomes the following.}
&=\Pr[M_{t}^*>\Delta,\ \tau\geq t]+\Pr[M_{t}^*>\Delta,\ \xi\leq\tau<t] \, .
\intertext{Next, observe that when $\tau\geq t$, we have $t=t\wedge\tau$. Also, if $\xi\leq\tau<t$ then $M_{t}^*,M_{t\wedge\tau}^*>\Delta$ according to the definition of $\xi$. Namely, we can turn the process into its stopped process in the above equation as follows.}
&=\Pr[M_{t\wedge\tau}^*>\Delta,\ \tau\geq t]+\Pr[M_{t\wedge\tau}^*>\Delta,\ \xi\leq\tau<t]\\
&\leq\Pr[M_{t\wedge\tau}^*>\Delta]<\delta
\end{align*}
where the last inequality is due to the first condition in the lemma statement. Thus, we have $\Pr[M_{t}^*>\Delta]<\delta$ as desired and complete the proof of the pull-out lemma.
\end{proof}

\section{Our General Framework}\label{sec:framework}
In the theoretical analysis of a learning algorithm, one usually identifies an \textit{objective function} to evaluate how well the algorithm performs. We use $\{X_t\}$ to denote the stochastic process induced by the objective function and the goal is to find the smallest possible rate function $r$ and show that $\{X_t\}$ converges with rate $r$.

\paragraph{User manual.}
Our framework consists of three main steps: the recursion analysis, the moment and concentration analysis, and the improvement analysis. When analyzing a learning algorithm, it often takes few iterations on the three steps to identify the final (and hopefully, optimal) analysis. We also provide a few tips for advanced usage of the framework in~\autoref{sec:framework advanced}.

\paragraph{Prologue: Continuous analysis.}
It is often not obvious how to analyze a discrete stochastic process directly. A general principle inspired by stochastic approximation theory~\cite{Kushner_1997} is to first understand the behaviors of the continuous analog, which is the limiting process by taking the learning rate to $0$. The guidance from the continuous dynamic can usually be very insightful and point to a good way to analyze the discrete stochastic process. See~\cite{CW20} for a nice example and discussion on a stochastic process that is necessary to analyze in two different ways in different regimes and how continuous analysis helps the authors to identify the right recursion to work with.

\subsection{Step 1: Recursion analysis}\label{sec:framewok recursion}
Guided by the continuous analysis, we investigate the local behavior of $\{X_t\}$ by approximating it with a well-studied dynamic. In general, this can be done by designing \textit{recursions} for $\{X_t\}$. For example, the simplest (one-step) recursion could be \textit{linearizing} $\{X_t\}$ as follows.
\[
X_t\leq H_t\cdot X_{t-1}+N_t \, ,\ \forall t\in\N
\]
where $H_t>0$ is a multiplicative factor and $N_t$ is a noise/minor term depending on both $X_{t-1}$ and the stochasticity at the $t$-th step. 

A recursion for $\{X_t\}$ dissects the dominating dynamic of $\{X_t\}$ from the minor stochastic noise. For example, we can unfold the above one-step linearization into the following recursion for $\{X_t\}$.\footnote{This is also known as the \textit{ODE trick} in~\cite{CW20}.}
\begin{equation}\label{eq:recursion}
X_t\leq \left(\prod_{t'=T_0+1}^tH_{t'}\right)\cdot X_{T_0}+M_t \, ,\ \forall\,0\leq T_0< t\in\N
\end{equation}
where $M_{t}=\sum_{t'=T_0+1}^t\prod_{t''=T_0+1}^{t'}H_{t''}^{-1}\cdot N_{t'}$.
Intuitively, $(\prod_{t'=T_0+1}^tH_{t'})\cdot X_{T_0}$ is the \textit{dominating term} that governs the dynamic of $\{X_t\}$ and $M_t$ is the \textit{minor term} that is expected to be relatively small with high probability. Formally, we define a recursion as follows.

\begin{definition}[Recursion]\label{def:recursion}
Let $\{X_t\}_{t\in\N}$ be a stochastic process with a filtration $\{\cF_t\}_{t\in\N}$. A recursion for $\{X_t\}$ is a tuple of adapted processes $(\{D_t\}_{t\in\N},\{M_t\}_{t\in\N})$ with the following properties. (i) $D_0=X_0$ and $M_0=0$. (ii) $X_t\leq D_t+M_t$ almost surely for every $t\in\N$.
\end{definition}

Intuitively, the dominating term $\{D_t\}$ tells us how fast the process $\{X_t\}$ converges and hence the task left is to show that the minor term $\{M_t\}$ is small. In general, a recursion analysis can dissect $\{X_t\}$ into a more complicated form rather than the above linear form $X_t\leq D_t+M_t$. Also, one could use more than one recursions in the analysis and sometimes designing multiple recursions for $\{X_t\}$ would lead to a tighter analysis (e.g.,~\cite{AL17,CW20,harvey2019tight}). In this paper, the framework only focuses on using a single (linear) recursion (i.e.,~\autoref{def:recursion}). The reason is that we already achieve state-of-the-art analysis for all our examples in this simplified setting and the principle in analyzing a single recursion can be easily generalized to multiple recursions. See~\autoref{app:framework} for more discussion on possible extensions.

\paragraph{User manual.}
The recursion step offers a huge design space for the analysis. Once we fix a recursion (or recursions), the next two steps of the analysis will be machinery. Linearization is the easiest way to identify an analyzable recursion for $\{X_t\}$. Especially,~\autoref{eq:recursion} provides a principle\footnote{Note that in a recursion we require the minor process $\{M_t\}$ does not depend on the future, a.k.a., being \textit{adaptive}. In general, this is non-trivial to achieve while linearization guarantees the resulting $\{M_t\}$ is adaptive.} way to dissect $\{X_t\}$ into a drifting term and a minor term. For convenience, we sometimes scale the minor term $\{M_t\}$ by a multiplicative factor in the analysis and that should be clear from the context.

\subsection{Step 2: Moment and concentration analysis}\label{sec:framework moment}
Let $(\{D_t\},\{M_t\})$ be a recursion for $\{X_t\}$. The goal is to show that the minor term $M_t$ is small (with high probability) and hence the dominating term $D_t$ would govern the dynamic of $\{X_t\}$. However, in general $M_t$ could depend on the value of $X_{t'}$ for some $t'<t$ and hence make the analysis challenging. This is the ``chicken and egg'' phenomenon mentioned in~\autoref{sec:intro}.

To resolve the ``chicken and egg'' problem and achieve a tighter analysis, we use a \textit{stopping time} technique to keep track of the \textit{local information on where $\{X_t\}$ is}. Hence, using the moment information of the stopped process of $\{M_{t}\}$, we are able to apply martingale concentration inequality and show that the stopped process of $\{M_t\}$ is dominated by $\{D_t\}$.
We call the collection of such moment bounds a \textit{moment profile} for a recursion $(\{D_t\},\{M_t\})$.

\begin{definition}[Moment profile]\label{def:moment profile}
Let $\{X_t\}_{t\in\Nz}$ be a stochastic process with a filtration $\{\cF_t\}_{t\in\Nz}$ and let $(\{D_t\}_{t\in\Nz},\{M_t\}_{t\in\Nz})$ be a recursion for $\{X_t\}$. Let $\{\Lambda_t\}_{t\in\Nz}$ be a sequence of non-negative thresholds, $\tau$ be the stopping time for the event $\{X_t\geq\Lambda_t\}$, and $\{M_{t\wedge\tau}\}_{t\in\Nz}$ be the stopped process of $\{M_t\}$. A moment profile for the recursion $(\{D_t\},\{M_t\})$ is a tuple of functions $(B,\mu,\sigma^2)$ with the following properties.
\begin{itemize}
\setlength\itemsep{0mm}
\item (Bounded difference) For every $t\geq1$, $|M_{t\wedge\tau}-M_{(t-1)\wedge\tau}|\leq B(t,\{\Lambda_t\})$ almost surely.
\item (Conditional expectation) For every $t\geq1$, $\Exp[M_{t\wedge\tau}-M_{(t-1)\wedge\tau}\ |\ \cF_{t-1}]\leq\mu(t,\{\Lambda_t\})$.
\item (Conditional variance) For every $t\geq1$, $\Var[M_{t\wedge\tau}-M_{(t-1)\wedge\tau}\ |\ \cF_{t-1}]\leq\sigma^2(t,\{\Lambda_t\})$.
\end{itemize}
If we started from time $T_0\in\N$ instead of time $0$, then the moment profile is denoted as $(B_{T_0},\mu_{T_0},\sigma^2_{T_0})$.
\end{definition}

Treating $\{\Lambda_t\}$ as free parameters \textit{isolates} the moment calculation from the potentially complicated global dynamic. Common choices for $\{\Lambda_t\}$ are (i) uniform threshold: $\Lambda_t=\Lambda$ for some $\Lambda>0$ and for all $t\in\N$, and (ii) linear threshold: $\Lambda_t=\Lambda/t$ for some $\Lambda>0$ and for all $t\in\N$. The choice of stopping time $\tau$ is very flexible and hence provides additional room for specialized techniques.

Given a moment profile $(B,\mu,\sigma^2)$ for a recursion $(\{D_t\},\{M_t\})$, we can apply a martingale concentration inequality (e.g.,~\autoref{lem:martingale concentration}) and get the following. For every $\delta\in(0,1)$ and $T\in\N$,
\begin{equation}\label{eq:concentration with stopping time}
\Pr\left[\exists t\in[T],\ M_{t\wedge\tau}-M_0 > \Delta \right] < \delta
\end{equation}
where $\Delta$ is the deviation and is a function of $(B,\mu,\sigma^2,\delta,T)$. In particular, by~\autoref{lem:martingale concentration} we have
\[
\Delta = 2\max\left\lbrace\sqrt{\sum_{t' = 1}^{T}\sigma(t',\{\Lambda_t\})\log\frac{1}{\delta}}, \max_{1\leq t'\leq T}2B(t',\{\Lambda_t\})\log\frac{1}{\delta}\right\rbrace + \sum_{t'=1}^{T}\mu(t',\{\Lambda_t\}) \, .
\]

\paragraph{User manual.}
It is convenient to think of the stopped martingale difference $M_{t\wedge\tau}-M_{(t-1)\wedge\tau}$ as $\mathbf{1}_{\{\tau>t-1\}}\cdot(M_t-M_{t-1})$. To calculate the moment profile, one expresses the three moment quantities of $M_t-M_{t-1}$ as a function of $\{\Lambda_t\}$ conditioning on the event $\{X_{t'}<\Lambda_{t'}\, |\, t'=1,2,\dots,t-1\}$.

\subsection{Step 3: Improvement analysis}\label{sec:framework improvement}
In Step 2, we use a moment profile $(B,\mu,\sigma^2)$ and apply a concentration inequality on the stopped process of the minor term $\{M_t\}$. Ideally, we would like to show that $\{M_t\}$ is small and hence $\{X_t\}$ is dominated by $\{D_t\}$. However, the concentration analysis only works for the stopped process $\{M_{t\wedge\tau}\}$ instead of the original minor process $\{M_t\}$. To resolve this issue, we provide a general and systematic way to \textit{pull out} the stopping time $\tau$ from a concentration inequality via~\autoref{lem:pull out}. 

\pulloutlemma*

Roughly speaking, the pull-out lemma can be proved by partitioning the probability space of the event $\{\exists t\in[T],\ M_t>\Delta\}$ into two parts. In the first part, $\{M_t\}$ agrees with the stopped process $\{M_{t\wedge\tau}\}$ and hence the first condition guarantees that the event happens with a small probability. In the second part, the second condition guarantees that $M_t>\Delta$ would never happen.

After pulling out the stopping time $\tau$ from the concentration inequality, we then have an upper bound on the minor process $\{M_t\}$ and hence can conclude that $\{D_t\}$ dominated the dynamic of $\{X_t\}$. This completes the convergence analysis for $\{X_t\}$.

\paragraph{User manual.}
When instantiating the pull-out lemma to a concrete dynamic, the two conditions in~\autoref{lem:pull out} become some inequalities over $\{\Lambda_t\}$. So the analysis boils down to finding a choice of $\{\Lambda_t\}$ such that (i) all the inequalities are satisfied and (ii) the deviation $\Delta$ is as small as possible.

\subsection{Advanced steps: Flexible analysis}\label{sec:framework advanced}

In the previous three main steps of our framework, we focus on analyzing a single recursion in a single interval. When working on more complicated dynamics, we can extend the basic framework in two ways to achieve a tighter analysis. First, perform an \textit{interval analysis} by dividing the time interval into small pieces and analyzing them separately. This enables tighter analysis in each interval with the cost of a union bound in the end. See~\autoref{sec:sgd} and for an example. Second, design \textit{multiple recursions} that dissect the dominating process better from the minor processes. For example, some previous works~\cite{AL17,CW20,harvey2019tight} use multiple recursions to achieve (nearly) optimal analysis. Nevertheless, the analysis is usually very complicated and specialized to the specific learning dynamic. Our framework, though presented for analyzing a single recursion, provides a general principle to analyze multiple recursions by applying the three-step recipe on each recursion respectively.

\section{Our Analysis of SGD for Strongly Convex Functions}\label{sec:sgd}
In this section, we are going to give a self-contained and unifying proof for two types of high probability convergence guarantees for our running example: SGD for strongly convex functions. Recall that $\{X_t\}$ is the stochastic process that captures how well the SGD algorithm performs and we would like to show that $X_t=O(1/t)$ with high probability. See~\autoref{sec:running example} to review the setup of the problem. We begin by restating the main theorem.

\begin{theorem}[High probability convergence of SGD for strongly convex functions]\label{thm: sgd main appendix}
Let $F$ be a $\lambda$-strongly convex function over some convex domain. An SGD algorithm with gradient bounded by $G>0$ almost surely has the following convergence rate. For every $\delta\in(0,1)$,
\begin{itemize}
\item if $\eta_t=1/(\lambda t)$, for every large enough $T\in\N$, we have the following last iterate convergence
\[
\Pr\left[ X_{T} > \frac{500G^2\log\frac{1}{\delta}}{\lambda^2T} \right] < \delta \, ;
\]
\item if $\eta_t=1/(\lambda t)$, we have the following strong uniform convergence
\[
\Pr\left[\exists  t\in\N,\ X_{t} > \frac{1000 G^2\left(\log\frac{1}{\delta}+2\log\log(t+1)\right)}{\lambda^2 t} \right] < \delta \, .
\]
\end{itemize}
\end{theorem}

\paragraph{Section structure.} In the rest of this section, we analyze a stopped process of $\{X_t\}$ in~\autoref{sec:SGD concentration of stopped} and complete the proof of~\autoref{thm: sgd main appendix} in~\autoref{sec:SGD pull out}. We end the section with a comparison of our proof with the previous work in~\autoref{sec:SGD comparison}.

\subsection{Concentration for the stopped process}\label{sec:SGD concentration of stopped}
Recall that in~\autoref{sec:running chicken and egg} we establish the following recursion for $\{X_t\}$.
\[
X_t\leq\frac{T_0(T_0-1)}{t(t-1)}\cdot(X_{T_0}+M_t)
\]
where $M_{t}=\sum_{t'=T_0+1}^t\frac{t'(t'-1)}{T_0(T_0-1)}\cdot\left(\frac{2(\lambda X_{t'-1}-\hat{\bg}_{t'}^\top(\bw_{t'-1}-\bw^*))}{\lambda t'}+\frac{\|\hat{\bg}_{t'}\|^2}{\lambda^2t'^2}\right)$ for all $t\geq T_0\geq3$. In order to obtain a good bound for $\{M_t\}$, one needs to control its moments well. Specifically, we use a stopping time $\tau$ for the event $\{X_t>\Lambda_t\}$ with parameters $\{\Lambda_t\}$ chosen later to get a tighter control on the stochasticity of $\{M_t\}$.

\begin{lemma}\label{lem:sgd moment details}
Consider the setting in~\autoref{thm: sgd main appendix}. Let $\{\Lambda_t\}$ be a positive sequence and $\tau$ be the stopping time of the event $\lbrace X_t > \Lambda_t \rbrace$. For every $100\leq T_0+1\leq t$, we have the following.
\begin{itemize}[leftmargin=*]
\item (Bounded difference) $|M_{t\wedge\tau}-M_{(t-1)\wedge\tau}|\leq \frac{15G^2t}{\lambda^2 T_0^2}$ almost surely.
\item (Conditional expectation) $\Exp\left[M_{t\wedge\tau}-M_{(t-1)\wedge\tau}\ |\ \cF_{t-1}\right]\leq\frac{2G^2}{\lambda^2T_0^2}$.
\item (Conditional variance) $\Var\left[M_{t\wedge\tau}-M_{(t-1)\wedge\tau} \ |\ \cF_{t-1}\right]\leq\frac{G^2t^2}{\lambda^2T_0^4}\cdot\left(50\Lambda_t+\frac{5G^2}{\lambda^2t^2}\right)$.
\end{itemize}
\end{lemma}
\begin{proof}
Recall that $M_{t\wedge\tau}-M_{(t-1)\wedge\tau}=\mathbf{1}_{\tau\geq t}\cdot(M_t-M_{t-1})$.
For bounded difference, we have
\begin{align}
|M_{t\wedge\tau}-M_{(t-1)\wedge\tau}| &= \left| \mathbf{1}_{\tau\geq t}\cdot \frac{t(t-1)}{T_0(T_0-1)}\cdot\left(\frac{2\left(\lambda X_{t-1}-\hat{\bg}_t^\top(\bw_{t-1}-\bw^*)\right)}{\lambda t} + \frac{\|\hat{\bg}_t\|^2}{\lambda^2t^2}\right) \right| \, . \label{eq: sgd moment diff}
\intertext{By the Cauchy-Schwarz inequality and the facts that $\|\hat{\bg}_{t}\|\leq G$ and $X_t \leq \frac{4G^2}{\lambda^2}$, we get}
&\leq \mathbf{1}_{\tau\geq t}\cdot \frac{t(t-1)}{T_0(T_0-1)} \cdot \left(\frac{2\lambda X_{t-1} + 2G\sqrt{X_{t-1}}}{\lambda t} + \frac{G^2}{\lambda^2t^2}\right) \,
\leq \frac{15G^2t}{\lambda^2T_0^2} \, . \nonumber
\end{align}
For conditional expectation, we apply the fact that $\Exp[\hat{\bg}_{t}^\top(\bw_{t-1}-\bw^*)\ |\ \cF_{t-1}]\geq\lambda X_{t-1}$ to \autoref{eq: sgd moment diff} and get
\begin{align*}
\Exp\left[M_{t\wedge\tau}-M_{(t-1)\wedge\tau}\ |\ \cF_{t-1}\right]\leq\frac{2G^2}{\lambda^2T_0^2} \, .
\end{align*}
For the conditional variance, we have by~\autoref{eq: sgd moment diff} and the definition of $\tau$,
\begin{align*}
\Var\left[M_{t\wedge\tau}-M_{(t-1)\wedge\tau} \ |\ \cF_{t-1}\right] &\leq \Exp\left[\mathbf{1}_{\tau \geq t}\cdot(M_t-M_{t-1})^2\ |\ \cF_{t-1}\right]\\
&\leq \frac{100}{99}\cdot\frac{t^2}{T_0^4}\cdot\left(\Exp\left[\mathbf{1}_{\tau\geq t}\cdot\frac{8\lambda^2X_{t-1}^2+16G^2X_{t-1}}{\lambda^2}+\frac{4G^4}{\lambda^4t^2} \, |\, \cF_{t-1}\right]\right)\\
&\leq\frac{G^2t^2}{\lambda^2T_0^4}\cdot\left(50\Lambda_t+\frac{5G^2}{\lambda^2t^2}\right)\, .
\end{align*}
This completes the proof of~\autoref{lem:sgd moment details}.
\end{proof}



\subsection{Convergence analysis of SGD}\label{sec:SGD pull out}
Now, we prove the two high-probability convergence guarantees for SGD. We also provide an unifying understanding of the proofs in~\autoref{fig:sgd}.

\paragraph{Last iterate convergence.}

\begin{proof}[Proof of item 1 of~\autoref{thm: sgd main appendix}]

In the analysis for last iterate convergence we consider a stopping time $\tau$ defined for the event $\{X_t>\Lambda_t\}$, where $\Lambda_t = \frac{T_1}{t^2}\Lambda$, for an appropriate $\Lambda$ chosen later. Let $T_0=100$ and let $(B_{T_0},\mu_{T_0},\sigma^2_{T_0})$ be the moment profile obtained from~\autoref{lem:sgd moment details}. By~\autoref{lem:martingale concentration}, we have
\[
\Pr\left[\exists T_0+1\leq t\leq T_1,\ |M_{t\wedge\tau}|>\Delta\right]<\delta\, ,
\]
where the deviation $\Delta$ can be upper bounded as follows.
\begin{align*}
\Delta&\leq2\max\left\{\sqrt{\sum_{t=T_0+1}^{T_1}\frac{G^2t^2(50T_1\Lambda/t^2+5G^2/(\lambda^2t^2))}{\lambda^2T_0^4}\log\frac{1}{\delta}},\ \max_{T_0+1\leq t\leq T_1}\frac{15G^2t}{\lambda^2T_0^2}\log\frac{1}{\delta}\right\}+\sum_{t=T_0+1}^{T_1}\frac{2G^2}{\lambda^2T_0^2}\\
&
\leq2\max\left\{\frac{G^2T_1\log\frac{1}{\delta}}{\lambda^2T_0^2}\sqrt{\frac{50\lambda^2\Lambda}{G^2\log\frac{1}{\delta}}+\frac{5}{T_1\log\frac{1}{\delta}}},\ \frac{15G^2T_1\log\frac{1}{\delta}}{\lambda^2T_0^2}\right\}+\frac{2G^2T_1}{\lambda^2T_0^2} \, .
\intertext{Let $\Lambda=\frac{500G^2\log\frac{1}{\delta}}{\lambda^2}$, we further have}
&\leq\frac{400G^2T_1\log\frac{1}{\delta}}{\lambda^2T_0^2} \, .
\end{align*}

Now, in order to \textit{pull out} the stopping time $\tau$ from the concentration inequality, we verify the pull-out condition stated in~\autoref{lem:pull out}, which, in this setting, is
\[
\Pr\left[X_{t'}<\frac{T_1^2}{t'^2}\Lambda,\ \forall T_0+1\leq t'\leq t\, |\, M_{t'}\leq\Delta,\ \forall T_0+1\leq t'\leq t\right]=1\, ,
\]
for every $T_0+1\leq t\leq T_1$. Observe that when $M_{t'}\leq\Delta$ for all $t'\leq t$, for every $T_0+1\leq t'\leq t$, by the recursion we have 
\begin{align*}
X_{t'}&=\prod_{t''=T_0+1}^{t'}(1-2\eta_{t''}\lambda)(X_{T_0}+M_{t'}) =\frac{T_0(T_0-1)}{t'(t'-1)}(X_{T_0}+M_{t'})\\
&\leq\frac{T_0^2}{t'^2}(X_{T_0}+\Delta) <\frac{T_0^2}{t'^2}\frac{500G^2T_1\log\frac{1}{\delta}}{\lambda^2T_0^2}=\frac{T_1\Lambda}{t'^2} \, .
\end{align*}
Thus, by the pull-out lemma (i.e.,~\autoref{lem:pull out}) we have
\[
\Pr\left[\exists T_0+1\leq t\leq T_1,\ |M_{t}|>\frac{500G^2T_1\log\frac{1}{\delta}}{\lambda^2T_0^2}\right]<\delta \, .
\]
Combine with the recursion and the fact that $X_{T_0}\leq\frac{4G^2}{\lambda^2}$ due to the $G$-boundedness of $F$, we have
\[
\Pr\left[X_{T_1}>\frac{500G^2\log\frac{1}{\delta}}{\lambda^2T_1}\right]<\delta \, .
\]
This completes the proof for the last iterate convergence of SGD.
\end{proof}

\paragraph{Strong uniform convergence.}
It turns out that to achieve tighter convergence rate for strong uniform convergence, we have to apply concentration analysis on different intervals and apply an union bound in the end. Here, we modularize the concentration analysis for a single interval into the following lemma.

\begin{lemma}\label{lem:sgd concentration details}
Consider the setting in~\autoref{thm: sgd main appendix}. For every $100\leq T_0\in\N$, let  $(B_{T_0},\mu_{T_0},\sigma^2_{T_0})$ be the moment profile for $\{X_t\}$ and $T_0$ from~\autoref{lem:sgd moment details}. If $\eta_t=1/\lambda t$, then for every $A_0>A_1>0$, $\delta'>0$, $\Lambda>0$, and $100\leq T_0<T_1\in\N$, let
\[
\Delta=\frac{G^2T_1\log\frac{1}{\delta'}}{\lambda^2T_0^2} \left(\sqrt{\frac{70T_1\Lambda\lambda^2}{G^2
\log\frac{1}{\delta'}}} + 50\right) \, .
\]
Suppose that we have $A_0 + \Delta < \Lambda$ and $\frac{T_0(T_0-1)}{T_1(T_1-1)} \Lambda < A_1$. Then, 
\[
\Pr\left[\exists{T_0 + 1\leq t\leq T_1}\, ,\ X_t > A_1 \ \middle| \ X_{T_0} < A_0\right] < \delta' \, .
\]
\end{lemma}
\begin{proof}
Let $\tau$ be the stopping time for the event $\{X_t>\Lambda\}$.
Given the moment profile for $\{X_t\}$ in~\autoref{lem:sgd moment details}, we apply~\autoref{lem:martingale concentration} and get
\[
\Pr\left[\exists T_0+1\leq t\leq T_1,\ M_{t\wedge\tau}<\Delta \right] < \delta'
\]
where
\begin{align*}
\Delta\leq\ &\max\left\{2\sqrt{\sum_{t' = T_0 + 1}^{T_1}\frac{G^2t'^2}{\lambda^2T_0^4}\left(50\Lambda+\frac{5G^2}{\lambda^2t'^2}\right)\log\frac{1}{\delta'}} , 30\frac{G^2T_1}{\lambda^2T_0^2}\log\frac{1}{\delta'}\right\} + \sum_{t'=T_0+1}^{T_1}\frac{2G^2}{\lambda^2 T_0^2}\\
\leq\ & \sqrt{\frac{70G^2T_1^3\Lambda\log\frac{1}{\delta'}}{\lambda^2T_0^4}} + \frac{G^2\sqrt{20T_1\log\frac{1}{\delta'}}}{\lambda^2T_0^2} +  30\frac{G^2T_1}{\lambda^2T_0^2}\log\frac{1}{\delta'} + \frac{2G^2T_1}{\lambda^2 T_0^2}\\
\leq\ &\frac{G^2T_1\log\frac{1}{\delta'}}{\lambda^2T_0^2} \left(\sqrt{\frac{70T_1\Lambda\lambda^2}{G^2
\log\frac{1}{\delta'}}} + 50\right) \, .
\end{align*}
By~\autoref{eq:sgd recursion}, for every $T_0+1\leq t\leq T_1$ we have
\[
X_{t}\leq\frac{T_0(T_0-1)}{t(t-1)}\cdot\left(X_{T_0}+M_t\right)
\]
and hence $A_0+\Delta<\Lambda$ implies the pull-out condition of~\autoref{lem:pull out}. Thus, we conclude that $\Pr[\exists{T_0 + 1\leq t\leq T_1}\, ,\ X_t > A_1 \, |\, X_{T_0} < A_0] < \delta'$ as desired.
\end{proof}

We now prove the strong uniform convergence bound for SGD.
\begin{proof}[Proof of item 2 in~\autoref{thm: sgd main appendix}]
Let $t_0=0$, $t_1=100$, $a_0=\frac{1000G^2\log\frac{1}{\delta}}{\lambda^2t_1}$. For each $i\geq2$, let $t_i=2t_{i-1}$. For each $i\geq1$, let
\[
\delta_i=\frac{\delta}{2i^2},\ a_i = \frac{1000 G^2\log\frac{1}{\delta_i}}{\lambda^2 t_i}\text{, and } \Lambda_i = 2a_{i-1} \, .
\]
Now, for each $i\in\N$, we invoke~\autoref{lem:sgd concentration details} with $A_0=a_{i-1}$, $A_1=a_i$, $T_0=t_{i-1}$, $T_1=t_i$, and $\delta'=\delta_i$.
Let us verify the two conditions. First, we verify the pull out condition as follows.
\begin{align*}
a_{i-1} + \Delta_i
&=a_{i-1} + \frac{G^2t_i\log\frac{1}{\delta_i}}{\lambda^2t_{i-1}^2} \left(\sqrt{\frac{70t_{i-1}\Lambda_i\lambda^2}{G^2
\log\frac{1}{\delta_i}}} + 50\right)\\
&= a_{i-1} + \frac{2\sqrt{70\cdot 2000}\cdot a_{i-1}}{1000} + \frac{200a_{i-1}}{1000} < 2a_{i-1} = \Lambda_i \, .
\end{align*}
Next, we verify the improvement condition $\prod_{t=t_{i-1}+1}^{t_i}(1-2\eta_t\lambda)(a_{i-1}+\Delta_i) < a_{i}$ as follows.
\[
 \frac{t_{i-1}(t_{i-1} - 1)}{t_i(t_i - 1)} \cdot 2a_{i-1} < \frac{2a_{i-1}}{4} \leq a_i \, .
\]
As $a_i\geq \frac{1000G^2(\log(1/\delta)+\log\log (t_{i-1}+1))}{\lambda^2t_{i}}$ for every $t_{i-1}+1\leq t\leq t_i$, by~\autoref{lem:sgd concentration details}, this implies that
\[
\Pr\left[\exists t_{i-1} + 1\leq t\leq t_i \, ,\ X_{t} > \frac{1000 G^2\left(\log\frac{1}{\delta}+\log\log (t+1)\right)}{\lambda^2 t} \, \middle|\, X_{t_{i-1}}\leq a_{i-1} \right] < \delta_i \, .
\]
Also, since $\sum_{i=1}^\infty\delta_i\leq\delta$ and $X_{t}\leq \frac{4G^2}{\lambda^2}$ for all $t\leq t_1$ due to the $G$-boundedness, by union bound we have
\[
\Pr\left[\exists  t\in\N,\ X_{t} > \frac{1000 G^2\left(\log\frac{1}{\delta}+\log\log (t+1)\right)}{\lambda^2 t} \right] < \delta \, .
\]
This completes the proof for the strong uniform convergence of SGD.
\end{proof}

\begin{figure}[ht]
    \centering
    \includegraphics[width=15cm]{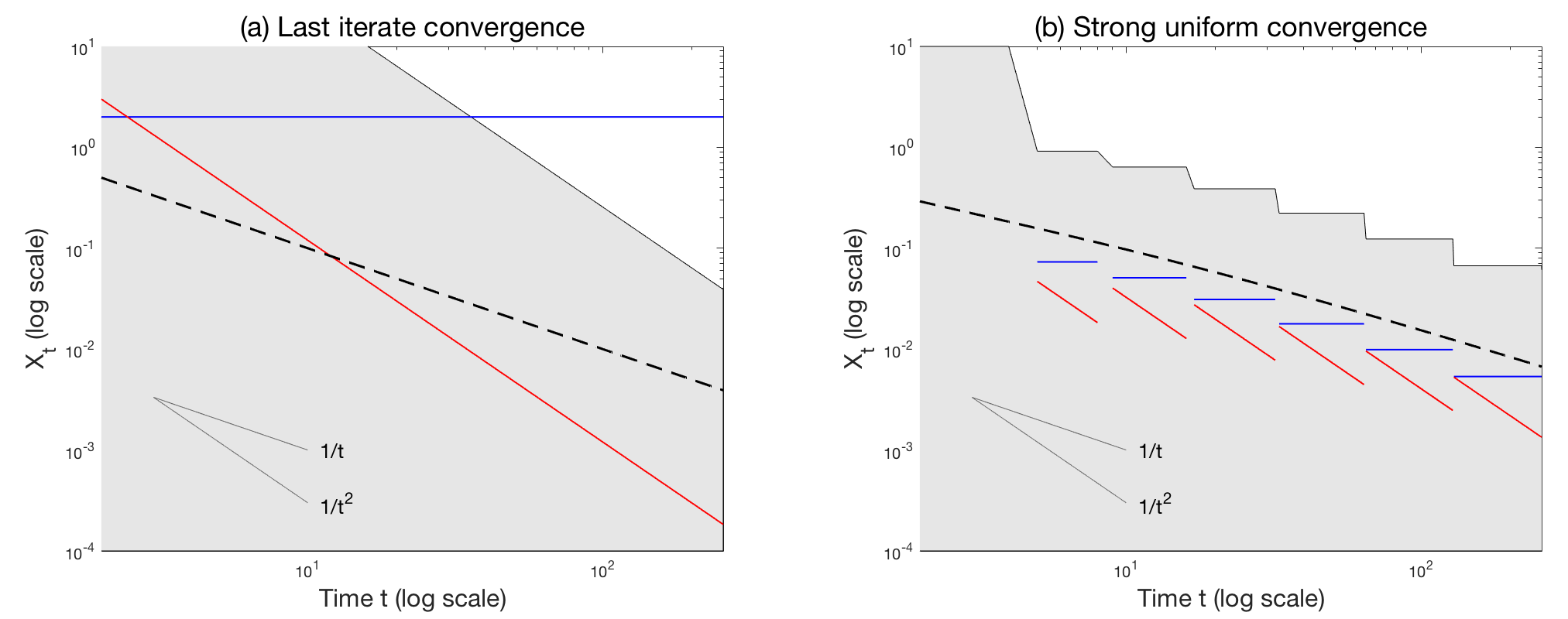}
    \caption{\textbf{A unifying analysis for the convergence of SGD.} The $x$-axis is for the time (in log scale) and the $y$-axis is for the value of $X_t$ (in log scale). The gray area denotes the regime where $\{X_t\leq\Lambda_t\}$. The {\color{blue} blue} line represents the deviation bound $\{\Delta_t\}$ for $\{M_{t\wedge\tau}\}$ while the {\color{red} red} line represents the upper bound for $\{X_{t\wedge\tau}\}$. The pull-out condition is equivalent to ``the red line lying within the gray area''. The dashed line represents the desired convergence speed $1/t$ and $\log\log(t)/t$ respectively. On the other hand, the decreasing speed of the red line is $O(1/t^2)$. In the last iterate convergence (\autoref{fig:sgd}~(a)), the goal is to show that the red line is below the dashed line \textit{at the last time point},. Thus, we allow the beginning of the red line to be higher and can analyze the process in a single interval. As for the strong uniform convergence (\autoref{fig:sgd}~(b)), the goal is to show that the red line is \textit{always} below the dashed line. This is impossible to achieve in a single interval because the red line is steeper than the dashed line. Nevertheless, we overcome this via an interval analysis by paying an extra $\log\log(t)$ union bound factor.}
    \label{fig:sgd}
\end{figure}

\subsection{Comparison with previous analysis}\label{sec:SGD comparison}
The last iterate convergence and the strong uniform convergence of SGD with the same rates of our~\autoref{thm: sgd main appendix} had been proved in~\cite[Theorem~7.5]{harvey2019tight} and~\cite[Proposition~1]{rakhlin2012making} respectively. Conceptually, our proof and techniques improve their analysis in two ways. First, the analysis of~\cite{harvey2019tight} and~\cite{rakhlin2012making} used different recursions for $\{X_t\}$. Namely, to show different types of convergence guarantees, the previous work had to deal with different noise processes while our proof provides a unifying understanding (see~\autoref{fig:sgd}). Second, both~\cite{harvey2019tight} and~\cite{rakhlin2012making} required specialized techniques (i.e., Lemma~4.1 of~\cite{harvey2019tight} and Lemma~3 of~\cite{rakhlin2012making}) in the heart of their proofs. Concretely, to tackle the chicken and egg problem, they had to prove new concentration inequalities specialized to the recursion they were using. On the contrary, one does not need to prove any new concentration inequality when using our framework. The ``stopping time + pull-out lemma'' package reduces the chicken and egg problem to properly designing a stopping time and verifying the pull-out condition. See~\autoref{sec:framework} for more detailed explanations and user manuals of our framework.

Finally, we want to highlight that although we do not improve the state-of-the-art\footnote{In fact, we believe the current rate is information-theoretically optimal.} convergence rate of SGD, we use our framework to improve the convergence rate analysis for other problems such as streaming PCA (see~\autoref{app:pca}). It is an interesting future direction to systematically apply our framework to other learning algorithms and get a simpler and more unifying proof as well as improve the convergence rate.

\paragraph*{Acknowledgement}
We thank Kai-Min Chung for useful discussion in the early stage of this work and thank Boaz Barak and Suvrit Sra for helpful comments on a draft of this paper. We also thank Madhu Sudan for telling us the puzzle in the introduction.
CC is supported by NSF awards CCF 1565264 and CNS 1618026. JS is supported by DARPA ONISQ program award HR001120C0068. MW is supported by NSF Awards CCF-1810758, CCF-0939370 and CCF-1461559. TY is supported by NSF BIGDATA grant 1741341.

\bibliography{mybib}
\bibliographystyle{alpha}

\newpage
\appendix
\begin{center}
    \Huge\textsc{Appendix}
\end{center}
\begin{itemize}
\setlength\itemsep{0mm}
    \item \autoref{app:tools} provides sufficient tools and mathematical backgrounds.
    \item \autoref{app:puzzle} provides details on the puzzle in the introduction.
    \item \autoref{app:framework} provides an in-depth discussion on the framework.
\item \autoref{app:pca} provides the details on the example of local convergence of $k$-PCA.
    \item \autoref{app:ql} provides the details on the example of linear bandit with SGD updates.
\end{itemize}

\section{Tools and Preliminaries}\label{app:tools}

\subsection{Common convergence guarantees for learning dynamics}\label{sec:convergence types}
Let $\{X_t\}$ be a stochastic process. In this paper, we focus on the case where $\{X_t\}$ takes non-negative values and the goal is to show that $X_t\rightarrow0$ as $t\rightarrow\infty$. Here, we define two common variants of high probability convergence guarantee.

\begin{definition}[Last iterate convergence]\label{def:convergence last iterate}
Let $\delta\in(0,1)$ and $r:\N\times(0,1)\rightarrow\Real$ be a rate function. For every stochastic process $\{X_t\}$, we say $\{X_t\}$ has last iterate convergence with rate $r$ if $\Pr[X_t>r(t,\delta)]<\delta$ for every $t\in\N$.
\end{definition}

\begin{definition}[Strong uniform convergence]\label{def:convergence strong uniform}
Let $\delta\in(0,1)$ and $r:\N\times(0,1)\rightarrow\Real$ be a rate function. For every stochastic process $\{X_t\}$, we say $\{X_t\}$ has strong uniform convergence with rate $r$ if $\Pr[\exists t\in\N,\ X_t>r(t,\delta)]<\delta$.
\end{definition}

\subsection{Matrix norms and inequalities}
As many common potential functions are defined as the norm of certain matrix, here we provide some common matrix norms and inequalities that will be useful. 

\begin{definition}[Matrix norms]\label{def:matrix norms}
Let $A\in\Real^{n\times m}$.
\begin{itemize}
\item The Frobenius norm of $A$ is defined as
\[
\|A\|_F:=\sqrt{\tr(A^\top A)} \, .
\]
\item The operator norm of $A$ is defined as
\[
\|A\|:=\sup_{\bx\in\Real^n\backslash\{0^n\}}\frac{\|A\bx\|_2}{\|\bx\|_2} \, .
\]
\item The Schatten $p$ norm of $A$ for some $p\geq1$ is defined as
\[
\|A\|_p:=\tr(|A|^p)^{1/p}
\]
where $|A|:=\sqrt{A^\top A}$.
\item The matrix inner product of $\bx\in\Real^n$ and $\mathbf{y}\in\Real^m$ is defined as
\[
\langle\bx,\mathbf{y}\rangle_A:=\bx^\top A\mathbf{y} \, .
\]
When $A$ is a square matrix, the $A$-norm of $\bx$ is defined as
\[
\|\bx\|_A:=\sqrt{\langle\bx,\bx\rangle_A} \, .
\]
\end{itemize}
\end{definition}

Note that $\|A\|_F=\|A\|_2$ and $\|A\|_p^p=\sum_{i=1}^{n\wedge m}\sigma_i(A)^p$ where $\sigma_1(A)\geq\cdots\geq\sigma_n(A)\geq0$ are the singular values of $A$. Therefore, we have $\|AA^\top\|_1 = \|A\|_F^2$ and $\|A\|_\infty = \|A\|$.

\begin{lemma}[Matrix inequalities]\label{lem:matrix inequalities}
Let $A\in\Real^{n\times m}$ and $B\in\Real^{m\times k}$.
\begin{itemize}
\item $\|AB\|_F\leq\|A\|\|B\|_F$.
\item $\|A\|\leq\|A\|_F\leq\sqrt{m}\|A\|$.
\item (Matrix Cauchy-Schwarz inequality): $|\tr(AB)|\leq\|A\|_F\|B\|_F$.
\item (Matrix H\"{o}lder inequality): $|\tr(AB)|\leq\|A\|_p\|B\|_q$ for every $p,q\geq1$ such that $1/p+1/q=1$.
\item (Matrix AM-GM inequality): $|2\tr(AB)|\leq \tr(A^\top A) + \tr(B^\top B)=\|A\|_F^2+\|B\|_F^2$.
\end{itemize}
\end{lemma}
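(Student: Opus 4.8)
The plan is to prove each of the five inequalities in \autoref{lem:matrix inequalities} by reducing it to a standard fact, proceeding in an order that lets later items reuse earlier ones. Throughout I write $\sigma_1(\cdot)\ge\sigma_2(\cdot)\ge\cdots$ for singular values and use the identity $\|A\|_F^2=\sum_i\sigma_i(A)^2$, $\|A\|=\sigma_1(A)$, and $\|A\|_p^p=\sum_i\sigma_i(A)^p$ already recorded in the excerpt.

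First I would handle $\|AB\|_F\le\|A\|\|B\|_F$. Writing $B$ column by column as $B=[\bb_1\ \cdots\ \bb_k]$, we get $\|AB\|_F^2=\sum_j\|A\bb_j\|_2^2\le\sum_j\|A\|^2\|\bb_j\|_2^2=\|A\|^2\|B\|_F^2$ directly from the definition of the operator norm. Next, $\|A\|\le\|A\|_F\le\sqrt{m}\|A\|$: the left inequality is $\sigma_1(A)^2\le\sum_i\sigma_i(A)^2$, and the right is $\sum_i\sigma_i(A)^2\le(\mathrm{rank}\,A)\sigma_1(A)^2\le m\,\sigma_1(A)^2$ since there are at most $\min\{n,m\}\le m$ nonzero singular values. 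For the matrix Cauchy--Schwarz inequality $|\tr(AB)|\le\|A\|_F\|B\|_F$, I would view $\tr(AB)=\sum_{i,j}A_{ij}B_{ji}=\langle A^\top,B\rangle$ as the Euclidean inner product of the matrices $A^\top$ and $B$ (both in $\Real^{m\times n}$ when $AB$ is square, say $A\in\Real^{n\times m}$, $B\in\Real^{m\times n}$) and apply the ordinary Cauchy--Schwarz inequality, using $\|A^\top\|_F=\|A\|_F$. The matrix H\"older inequality $|\tr(AB)|\le\|A\|_p\|B\|_q$ with $1/p+1/q=1$ is the one genuinely nontrivial item; I would invoke von Neumann's trace inequality $|\tr(AB)|\le\sum_i\sigma_i(A)\sigma_i(B)$ and then apply the scalar H\"older inequality to the two nonnegative sequences $(\sigma_i(A))_i$ and $(\sigma_i(B))_i$, giving $\sum_i\sigma_i(A)\sigma_i(B)\le(\sum_i\sigma_i(A)^p)^{1/p}(\sum_i\sigma_i(B)^q)^{1/q}=\|A\|_p\|B\|_q$. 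Finally, the matrix AM-GM inequality $|2\tr(AB)|\le\|A\|_F^2+\|B\|_F^2$: here one must read $\tr(AB)$ with $A,B$ of compatible shapes so that $A^\top$ and $B$ have the same shape; then $0\le\|A^\top-B\|_F^2=\|A\|_F^2-2\langle A^\top,B\rangle+\|B\|_F^2=\|A\|_F^2-2\tr(AB)+\|B\|_F^2$ gives $2\tr(AB)\le\|A\|_F^2+\|B\|_F^2$, and applying the same to $-B$ yields the absolute value. (Alternatively, the AM-GM inequality follows from Cauchy--Schwarz plus the scalar AM-GM step $2ab\le a^2+b^2$.)

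The main obstacle is the H\"older inequality, specifically the need for von Neumann's trace inequality $|\tr(AB)|\le\sum_i\sigma_i(A)\sigma_i(B)$, which is the only ingredient not reducible to elementary column-wise manipulation or the definitions quoted in the excerpt. If one prefers a self-contained route, I would instead prove H\"older via interpolation or via the dual characterization $\|A\|_p=\sup\{|\tr(AX)|:\|X\|_q\le1\}$, but the cleanest presentation simply cites von Neumann's inequality as a standard fact (as the paper does for other standard tools). A minor bookkeeping point worth flagging: the lemma as stated writes $A\in\Real^{n\times m}$ and $B\in\Real^{m\times k}$, so $AB\in\Real^{n\times k}$ and $\tr(AB)$ only makes sense when $k=n$; in each trace inequality I would silently take $k=n$ (equivalently, interpret $\tr(AB)$ whenever the product is square), which is how these inequalities are used elsewhere in the paper.
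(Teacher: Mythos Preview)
The paper does not prove this lemma; it is stated as a collection of standard matrix inequalities and used as a black box elsewhere. Your proposal correctly supplies the standard arguments for all five items, including the appropriate flag about the dimension constraint $k=n$ needed for $\tr(AB)$ to make sense, so it fills in exactly what the paper leaves implicit.
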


\section{Details on the Puzzle about Conditional Expectation}\label{app:puzzle}
Let us first calculate the expected number of times to roll a dice before getting the number $1$. Let $T$ be this number and we have
\begin{equation}\label{eq:puzzle 1}
\Exp[T] = \sum_{t=1}^\infty t\cdot\Pr[T=t] = \sum_{t=1}^\infty t\cdot\left(1-\frac{1}{6}\right)^{t-1}\frac{1}{6} = 6 \, .
\end{equation}
Now, let us calculate the expectation of $X$ again conditioned on the dice only outputs an odd number. Let $E$ be the event of the dice only outputting an odd number and repeat the calculation as follows.
\begin{equation}\label{eq:puzzle 2}
\Exp[T\, |\, E] = \sum_{t=1}^\infty t\cdot\Pr[T=t\, |\, E] = \sum_{t=1}^\infty t\cdot\left(1-\frac{1}{3}\right)^{t-1}\frac{1}{3} = 3 \, .
\end{equation}
But we claimed in the introduction that the right answer should be $3/2$, what's wrong with the above calculation!?
It turns out that there are two ways to interpret ``conditioning on the event that you have only seen odd numbers'': (i) the dice never outputs an even number or (ii) we terminate the throwing process and redo it if there's an even number showing up. For (i),~\autoref{eq:puzzle 2} faithfully captures the scenario and does the calculation. However, apparently the question is asking about interpretation (ii). 

So what would be the right way to analyze this conditional expectation? Notice that stopping time is the right formalism for modeling this probability event. Namely, let $\{X_t\}_{t\in\N}$ be the outcomes of the dice and define $\tau:=\min_{X_t=1,\forall t'<t,\ X_{t'}\neq1}\{t\}$ and $\xi:=\min_{X_t\text{ even}, \forall t'<t,\ X_{t'}\text{ odd}}\{t\}$. Now we can rewrite the conditional expectation as $\Exp[\tau\, |\, \tau<\xi]$. Furthermore, note that
\[
\Exp[\tau\, |\, \tau<\xi] \overset{\text{(i)}}{=} \Exp[\tau\wedge\xi\, |\, \tau<\xi] \overset{\text{(ii)}}{=} \Exp[\tau\wedge\xi] \overset{\text{(iii)}}{=} \frac{3}{2}
\]
where (i) holds because in the conditional probability space $\tau<\xi$ we have $\tau=\tau\wedge\xi$ with probability $1$; (ii) holds by the symmetry of the dice\footnote{Concretely, let $\tau_i:=\min_{X_t=i,\forall t'<t,\ X_{t'}\neq1}\{t\}$ for each $i\in[6]$. Notice that by symmetry we have $\Exp[\tau\wedge\xi\, |\, \tau=\tau\wedge\xi]=\Exp[\tau\wedge\xi\, |\, \tau_i=\tau\wedge\xi]$ for all $i\in\{1,2,4,6\}$. As the events $\{\tau_i=\tau\wedge\xi\}_{i\in\{1,2,4,6\}}$ partition the whole probability space, we have $\Exp[\tau\wedge\xi\, |\, \tau<\xi]=\Exp[\tau\wedge\xi]$ as desired.}; (iii) holds due to the same calculation of~\autoref{eq:puzzle 1}. To sum up, the usage of stopping time not only captures the right conditional probability space but also helps identifying symmetry and avoiding complications.


\section{Details on the Framework}\label{app:framework}
In this section, we elaborate on the framework and provide complete proofs for the lemmas and propositions in the main article.

\subsection{Why non-linearity in the dynamic creates entanglement?}
In this subsection, we talk about how non-linearity in the stochastic updates can create the entanglement between the noise and the process. Consider the following general update rule
\[
X_t = f(X_{t-1}, N_t).
\]
By doing Taylor expansion on $f$, we have
\[
X_t = a_0(N_t) + a_1(N_t)X_{t-1} + a_2(N_t)X_{t-1}^2 + \dotsb \, . 
\]
For $f$ to be nonlinear, either for $i\geq 1$, $a_i(N_t)$ is not a constant or for $i>1$, $a_i(N_t)\neq 0$. If $a_i(N_t)$ is not constant, then the update rule is entangled already by looking at $a_i(N_t)X_{t-1}^i$. So it suffices to assume that only $a_0(N_t)$ depends on $N_t$. Now if $i > 1$ and $a_i(N_t)\neq 0$, we can unfold the expression one more step and get
\[
a_iX_{t-1}^i = a_i(a_0(N_{t-1}) + a_1(N_{t-1})X_{t-2} + a_2(N_{t-1})X_{t-2}^2 + \dotsb)^i = ia_ia_0(N_{t-1})(a_iX_{t-2}^i)^i + \dotsb \, .
\]
We can see that $a_0(N_{t-1})$ and $X_{t-2}$ are entangled together. 
\subsection{Continuous analysis}
In this subsection, we talk about how continuous analysis can serve as a guide on how to analyze the discrete dynamic. This subsection is not needed for the use of the framework, so the reader is welcome to skip it for the first time reading through this paper. The continuous analysis helps us in three ways: 
\begin{enumerate}
    \item Determine the intrinsic behaviors of the dynamic.
    \item Give us a guide on how to analyze the dynamic.
    \item Help us to write down a closed-form solution of the dynamic that is adapted.
\end{enumerate}

\paragraph{Determine the intrinsic behaviors of the dynamic.} One important thing to do is to understand the intrinsic behaviors of the dynamic first. For example, if at the continuous limit, the system is chaotic, it is probably useless to analyze further since we can't hope the discrete dynamic to be any better than chaos. Therefore, one natural way to study the stochastic system is to consider its continuous limit and then study the corresponding random dynamical system to characterize different fixed points, saddle points, limit cycles, etc. Only by fully understanding the continuous counterpart, we can cope with the intrinsic behaviors of the dynamic in the discrete setting. This leads us to our second point.

\paragraph{Give us a guide on how to analyze the dynamic.} By looking at the continuous system as a random dynamical system, we can obtain a strategy to analyze the dynamic. For example, even without the noise, it is not obvious how to analyze
\[
X_t = X_{t-1} + \eta X_{t-1}(1-X_{t-1}) \, .
\]
However, at the continuous limit, this gives us
\[
dX_t =  X_{t-1}(1-X_{t-1})dt
\]
which has a stable fixed point at $1$ and an unstable fixed point at $0$. This suggests that around $0$, we should linearize at $0$
\[
X_t = (1 + \eta(1-X_{t-1}))X_{t-1} \, .
\]
On the other hand, when the dynamic gets closer to $1$, we should linearize at $1$,
\[
X_t - 1 = (1 - \eta X_{t-1})(X_{t-1}-1) \, .
\]
We recommend~\cite{CW20} for a detailed discussion on related ideas.

\paragraph{Help us to write down a closed-form solution of the dynamic that is adapted.} In a stochastic differential equation, we solve the dynamic by writing it down as a stochastic integral. This suggests that to solve a stochastic difference equation, we should write it as a linear combination of the noise. This is how~\autoref{eq:recursion} appears. Recall that given a linear dynamic
\[
X_t = H_t\cdot X_{t-1} + N_t \, ,
\]
\autoref{eq:recursion} gives us
\[
X_t = \prod_{i=1}^tH_t \left(X_0 + \sum_{i=1}^t\frac{N_i}{\prod_{j=1}^iH_j}\right) \, .
\]
Comparing with the continuous counterpart, we have
\[
\frac{dX(t)}{dt} = H(t)X(t) + N(t)
\]
and
\[
X(t) = e^{H(t)}\left(X(0) + \int_{0}^te^{-H(s)}N(s)ds\right) 
\]
which we can see the correspondence easily. Furthermore, some readers might argue that~\autoref{eq:recursion} is simply an unrolling recursion to write down a closed-form solution, but we claim that it is a very special closed-form solution in which the summation of the noise term is adapted. 

Notice that $\sum_{i=1}^t\prod_{j=1}^iH_j^{-1}N_i$ in~\autoref{eq:recursion} does not depend on the future event. This is naturally true because it is the discrete counterpart of a stochastic integral. However, if we write the closed-form solution as
\[
X_t = \prod_{i=1}^tH_t X_0 + \sum_{i=1}^t\prod_{j=i+1}^tH_jN_i \, ,
\]
the process $\sum_{i=1}^t\prod_{j=i+1}^tH_jN_i$ is no longer adapted and hence we are not able to apply martingale concentration technique. In the situation where we need to write down closed form solution not from a linear approximation, translating how the continuous counterpart writes down the stochastic integral will help us to write down a closed form solution with adapted noise.

\subsection{Recursion analysis}
In this section, we provide the proof of \textit{unfolding the recursion}, \textit{a.k.a.,} the ODE trick in~\cite{CW20}.
\begin{lemma}\label{lem:ODE trick}
Let $\{X_t\}_{t\geq\Nz}$, $\{N_t\}_{t\in\N}$, and $\{H_t\}_{t\in\N}$ be sequences of random variables with the following dynamic
\begin{equation}\label{eq:ODE trick 1}
X_{t} \leq H_t\cdot X_{t-1}+N_t
\end{equation}
for all $t\in\N$. Then for all $T_0,t\in\Nz$ such that $T_0<t$, we have
\[
X_t \leq D_t\cdot(X_{T_0}+M_t)
\]
where $D_t=\prod_{t'=T_0+1}^tH_{t'}$ and $M_t=\sum_{t'=T_0+1}^tD_{t'}^{-1}\cdot N_{t'}$.
\end{lemma}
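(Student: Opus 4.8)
The plan is to prove the bound by induction on $t$ with the starting index $T_0$ held fixed, simply unrolling the one-step inequality \autoref{eq:ODE trick 1} one step at a time. It is cleanest to anchor the induction at $t=T_0$: there $D_{T_0}=\prod_{t'=T_0+1}^{T_0}H_{t'}$ is an empty product, hence $D_{T_0}=1$, and $M_{T_0}=\sum_{t'=T_0+1}^{T_0}D_{t'}^{-1}N_{t'}$ is an empty sum, hence $M_{T_0}=0$, so the claimed bound reads $X_{T_0}\le 1\cdot(X_{T_0}+0)$, which holds with equality. (The statement is only asserted for $t>T_0$, but this trivial case is a convenient anchor.)

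For the inductive step, assume the bound holds at time $t-1$ for some $t>T_0$, i.e.\ $X_{t-1}\le D_{t-1}(X_{T_0}+M_{t-1})$. Applying \autoref{eq:ODE trick 1} at time $t$ and then the induction hypothesis gives $X_t\le H_t X_{t-1}+N_t\le H_t D_{t-1}(X_{T_0}+M_{t-1})+N_t$, where the second inequality requires $H_t>0$ (or at least $H_t\ge 0$) so that multiplying the induction hypothesis through by $H_t$ preserves the inequality direction. Then use the two telescoping identities $H_t D_{t-1}=\prod_{t'=T_0+1}^{t}H_{t'}=D_t$ and $M_{t-1}+D_t^{-1}N_t=M_t$ to rewrite $X_t\le D_t(X_{T_0}+M_{t-1})+N_t = D_t\bigl(X_{T_0}+M_{t-1}+D_t^{-1}N_t\bigr)=D_t(X_{T_0}+M_t)$, which is exactly the claim at time $t$. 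This closes the induction.

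There is essentially no hard step here — the argument is a one-line induction, and it is the discrete analog of solving the linear recursion by an ``integrating factor'' $D_t^{-1}$, exactly as in the continuous picture. The only point worth stating explicitly (and it is implicit in the surrounding setup, where $H_t$ is introduced as a positive multiplicative factor) is that each $H_{t'}$ must be strictly positive: positivity makes $D_{t'}^{-1}$ in the definition of $M_t$ well-defined, and, more importantly, it is what guarantees that multiplying the induction hypothesis by $H_t$ does not flip the inequality. In the write-up I would simply carry the standing assumption $H_{t'}>0$ from the recursion step of the framework throughout the proof.
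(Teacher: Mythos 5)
Your induction is correct and is essentially the paper's own argument in a different dress: the paper divides the one-step bound by $D_{t'}$ and telescopes from $T_0+1$ to $t$, which is exactly your ``integrating factor'' unrolling phrased non-inductively. Your explicit remark that $H_{t'}>0$ is needed (both to define $D_{t'}^{-1}$ and to preserve the inequality when multiplying/dividing) is a point the paper's proof uses implicitly, so flagging it is fine but does not change the approach.
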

\begin{proof}
For each $T_0+1\leq t'\leq t$, dividing~\autoref{eq:ODE trick 1} with $D_{t'}$ on both sides, we have
\[
\frac{X_{t'}}{D_{t'}} = \frac{X_{t'-1}}{D_{t'-1}} + \frac{N_{t'}}{D_{t'}} \, .
\]
We get the desiring expression by telescoping the above equation from $t'=T_0+1$ to $t$.
\end{proof}

\subsection{Improvement analysis}

Here, we instantiate the improvement analysis into the following proposition that could be convenient to apply in future analysis.

\begin{restatable}[Improvement analysis]{proposition}{concentration}\label{prop:concentration}
Let $\{X_t\}$ be a stochastic process described in~\autoref{eq:recursion} for some $\{D_t\}$ and $\{M_t\}$. Let $\Lambda>0$, $\tau$ be the stopping time for the event $\{X_t>\Lambda\}$, and $(B,\mu,\sigma^2)$ be a moment profile for $\{X_t\}$ and $\Lambda$. For every $T_1\in\N$ and $\delta'>0$, let
\[
\Delta=\Delta\left(B,\mu,\sigma^2,\Lambda,T_1,\delta'\right)
\]
be the deviation from a concentration inequality\footnote{For example, $\Delta=\sqrt{2\sum_{t' = T_0 + 1}^{T_1}B_{T_0}^2(t', \Lambda)\log(1/\delta')}+ \sum_{t'=T_0+1}^{T_1}\mu_{T_0}(t', \Lambda)$ if we used Azuma's inequality. See~\autoref{app:framework} and~\cite{Fan2006} for more examples.} such that $\Pr\left[\max_{1\leq t\leq T_1}M_{t\wedge\tau} > \Delta\right]<\delta' \, .$ \\If we have 
\begin{itemize}
\item (Improvement condition) $D_{T_1}\cdot(A_0+\Delta)\leq A_1$ and
\item (Pull-out condition) $D_t\cdot(A_0+\Delta)\leq\Lambda$ for every $1\leq t\leq T_1$.
\end{itemize}
Then we have 
\[\Pr\left[\exists t\in [T_1],\ X_t > D_t\cdot(A_0+\Delta)\ \middle|\ X_{0}\leq A_0\right]<\delta' \, .
\]
In particular, the above implies $\Pr[X_{T_1}>A_1\ |\ X_{0}\leq A_0]<\delta'$. Also, the proposition can naturally extend to starting from $T_0\in\N$ instead of $0$.
\end{restatable}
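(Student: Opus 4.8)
The plan is to instantiate the three-step recipe exactly as set up in the framework and then combine the pieces with the pull-out lemma (\autoref{lem:pull out}). First I would work conditionally on the event $\{X_0 \le A_0\}$ throughout. By the recursion (\autoref{eq:recursion}), for every $t \in [T_1]$ we have $X_t \le D_t \cdot X_0 + M_t \le D_t \cdot (A_0 + M_t)$ on this event, using that $\{D_t\}$ is a nonnegative (in fact, here monotone) multiplicative factor and $X_0 \le A_0$. So it suffices to control $M_t$ uniformly: if $M_t \le \Delta$ for all $t \in [T_1]$, then $X_t \le D_t(A_0 + \Delta)$ for all $t \in [T_1]$, which is the conclusion. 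The task therefore reduces to showing $\Pr[\exists t \in [T_1],\ M_t > \Delta \mid X_0 \le A_0] < \delta'$.

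The second step is to upgrade the hypothesized concentration bound $\Pr[\max_{1 \le t \le T_1} M_{t \wedge \tau} > \Delta] < \delta'$ — which is stated for the \emph{stopped} process — to the same bound for the \emph{un-stopped} process $\{M_t\}$. This is precisely the job of the pull-out lemma, whose first hypothesis is exactly the given stopped-process concentration bound. For the second hypothesis of \autoref{lem:pull out}, I need to check that for every $t \in [T_1]$, whenever $M_{t'} \le \Delta$ for all $t' \le t$, the stopping time $\tau$ (for the event $\{X_t > \Lambda\}$) has not yet fired, i.e. $\tau \ge t+1$. Here is where the pull-out condition $D_t \cdot (A_0 + \Delta) \le \Lambda$ for all $t \in [T_1]$ enters: on the event $\{X_0 \le A_0\} \cap \{M_{t'} \le \Delta \ \forall t' \le t\}$, the recursion gives $X_{t'} \le D_{t'}(A_0 + \Delta) \le \Lambda$ for every $t' \le t$, so indeed $X_{t'}$ never exceeds $\Lambda$ up to time $t$ and $\tau \ge t+1$. (One should be slightly careful about whether the stopping time is defined with $>$ or $\ge$ and whether the pull-out lemma's condition is stated with the closed event $\{M_{t'} \le \Delta\}$; this matches \autoref{lem:pull out} and \autoref{def:moment profile} as stated.) Applying the pull-out lemma then yields $\Pr[\exists t \in [T_1],\ M_t > \Delta \mid X_0 \le A_0] < \delta'$, and combined with the first paragraph this gives the main conclusion $\Pr[\exists t \in [T_1],\ X_t > D_t(A_0 + \Delta) \mid X_0 \le A_0] < \delta'$.

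The final step is the ``in particular'' clause: taking $t = T_1$ and using the improvement condition $D_{T_1} \cdot (A_0 + \Delta) \le A_1$, the event $\{X_{T_1} > A_1\}$ is contained in $\{X_{T_1} > D_{T_1}(A_0 + \Delta)\}$, hence has conditional probability less than $\delta'$. The extension to an arbitrary starting time $T_0$ in place of $0$ is immediate by re-indexing: apply everything to the shifted process with $D_t = \prod_{t'=T_0+1}^t H_{t'}$ and $M_t = \sum_{t'=T_0+1}^t D_{t'}^{-1} N_{t'}$ as in \autoref{lem:ODE trick}, and use the moment profile $(B_{T_0}, \mu_{T_0}, \sigma^2_{T_0})$ from \autoref{def:moment profile}.

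\textbf{Main obstacle.} The only genuinely delicate point is verifying the second hypothesis of the pull-out lemma — namely that the pull-out condition on $\{\Lambda\}$ really does force $\tau$ not to stop on the good event — and making sure the conditioning on $\{X_0 \le A_0\}$ is handled consistently (the pull-out lemma as stated is unconditional, so I would either fold the conditioning into the process definition or observe that conditioning on an $\mathcal{F}_0$-measurable event preserves the adaptedness and martingale structure and hence the lemma applies verbatim). Everything else is bookkeeping with the recursion inequality $X_t \le D_t(X_0 + M_t)$.
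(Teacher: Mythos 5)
Your proposal is correct and follows essentially the same route as the paper's own proof: verify the two hypotheses of the pull-out lemma (the given stopped-process concentration bound, and the recursion plus the pull-out condition forcing $\tau \ge t+1$ on the good event), pull out the stopping time to bound $\{M_t\}$ uniformly, and then read off the conclusion from the recursion and the improvement condition. The only difference is that you explicitly flag the conditioning on the $\mathcal{F}_0$-measurable event $\{X_0 \le A_0\}$, a point the paper's proof handles implicitly; your treatment is, if anything, slightly more careful.
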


\begin{proof}
We would like to apply the pull-out lemma, \textit{i.e.,}~\autoref{lem:pull out}, on $\{M_{t\wedge\tau}\}_{t\in\N}$ and thus have to verify the following two conditions. First, note that $\{M_{t\wedge\tau}\}_{t\in\N}$ forms a martingale. Thus, due to the martingale concentration inequality and the choice of $\Delta$, we have
\[
\Pr\left[\max_{1\leq t\leq T_1}M_{t\wedge\tau}>\Delta\right]<\delta'
\]
and thus we satisfy the first condition of the pull-out lemma. Next, for every $1\leq t\leq T_1$, suppose $\max_{1\leq t'\leq t}M_{t'}\leq\Delta$, then we have the following from the recursion formula (see~\autoref{eq:recursion}).
\[
X_{t}=D_t\cdot\left(X_{0}+M_t\right)\leq D_t\cdot(A_0+\Delta)\leq\Lambda
\]
where the last inequality is from the pull-out condition in the proposition statement. Thus, by the choice of $\tau$, we have
\[
\Pr\left[\tau>t\ \Bigg|\ \max_{1\leq t'\leq t}M_{t'}\leq\Delta\right]=1 \, . 
\]
The above two satisfy the second condition of the pull-out lemma as desired. As a result, by the pull-out lemma (see~\autoref{lem:pull out}), we have pulled out the stopping time as follows.
\[
\Pr\left[\max_{1\leq t\leq T_1}M_t>\Delta\right]<\delta' \, .
\]
Finally, by the recursion formula, we have
\[\Pr\left[\exists t\in [T_1],\ X_t > D_t\cdot(A_0+\Delta)\ \middle|\ X_{0}\leq A_0\right]<\delta' \, .
\]
In particular, by combining with the improvement condition in the proposition statement, we have
\[
\Pr[X_{T_1}>A_1\ |\ X_{0}\leq A_0]<\delta'
\]
as desired.
\end{proof}

\subsection{Interval analysis}

Now, we are going to see how to systematically implement a tight interval analysis using~\autoref{prop:concentration}. Recall that the proposition gives the improvement guarantee $\Pr[X_{T_1}>A_1\ |\ X_{T_0}\leq A_0]<\delta'$ as long as the two conditions are satisfied. Recall that we start from $X_0$ and want to see how small $X_T$ could be with high probability. As the first try, we can invoke~\autoref{prop:concentration} by setting $T_0=0$, $T_1=T$, and $A_0=X_0$ then see what is the smallest $A_1$ we can get. Nevertheless, such analysis in general would not be tight because it does not use the local information.

To fully leverage the improvement analysis, we perform an interval analysis by designing sequences $\{a_i\}$, $\{t_i\}$, $\{\delta_i\}$ of length $\ell$ (or $\ell+1$) such that we invoke~\autoref{prop:concentration} by setting $T_0=t_{i-1}$, $T_1=t_i$, $A_0=a_{i-1}$, $A_1=a_i$, and $\delta'=\delta_i$ for each $i=1,2,\dots,\ell$. Namely, in the $i$-th interval, we would like to show $\Pr[X_{t_i}>A_i\ |\ X_{t_{i-1}}\leq A_{i-1}]<\delta_i$ and thus by union bound we would have $\Pr[X_{t_\ell}>A_\ell\ |\ X_{t_0}\leq A_0]<\sum_i\delta_i$.

Note that with the general recipe as above, in principle one can get the tightest bound by solving the following optimization problem.
\begin{center}
    \fbox{%
    \parbox{0.65\textwidth}{%
	\begin{align*}
		& \underset{\ell\in\N,\{t_i\},\{a_i\},\{\delta_i\},\{\Lambda_i\}}{\text{minimize}}
		& & a_\ell \\
		& \text{subject to}
		& & a_i\geq\prod_{t=t_{i-1}+1}^{t_i}H_t\cdot(a_{i-1}+\Delta_i) \, ,\ \forall i=1,2,\dots,\ell\\
		& & & a_{i-1}+\Delta_i\leq\Lambda_i \, ,\ \forall i=1,2,\dots,\ell\\
		& & & \Pr[X_0>a_0]<\delta_0 \\
		& & & \sum_{i}\delta_i\leq\delta \\
		& & & 0=t_0<t_1<\cdots<t_\ell=T \, .
	\end{align*}
    }%
}
\end{center}
However, in general the above optimization problem might be complicated to solve optimally by hands. Thus, we provide some common ways to set the intervals as a principle to implement interval analysis.

\paragraph{How to set $\{a_i\}$.}
For simplicity, let us focus on the setting where the goal is moving from $X_0$ to $\epsilon$ where $X_0\gg\epsilon>0$. Namely, $a_0=X_0$ and $a_\ell=\epsilon$. The principle here can be easily adapted to other settings.
We provide three common ways of setting $\{a_i\}$: the \textit{greedy improvement}, the \textit{multiplicative improvement}, and the \textit{polynomial improvement}. See~\autoref{table:interval a} for a summary. 
\begin{table}[ht]
\centering
\def\arraystretch{2}
\begin{tabular}{|c|c|c|}
\hline
Type           & $\{a_i\}$                                                                    & \# intervals                                                \\ \hline
Greedy       & Pick $a_i$ as small as possible                                                  & Problem-dependent                                             \\ \hline
Multiplicative & $a_i=\frac{a_{i-1}}{2}=2^{-i}\cdot X_0$                                                      & $\ceil*{\log\frac{X_0}{\epsilon}}$         \\ \hline
Polynomial     & $a_i=\tfrac{\epsilon}{4}\cdot\left(\frac{4a_{i-1}}{\epsilon}\right)^{\frac{3}{4}}=\tfrac{\epsilon}{4}\cdot(\tfrac{4X_0}{\epsilon})^{(\frac{3}{4})^i}$ & $\ceil*{\frac{\log\log\frac{4X_0}{\epsilon}}{\log\frac{4}{3}}}$     \\ \hline
\end{tabular}
\caption{Three common ways of setting $\{a_i\}$ in an interval analysis. We specify how to pick $a_i$ according to $a_{i-1}$ in the second column and calculate the number of intervals in the third column. Note that the constants used here are arbitrarily chosen and can be further optimized during the implementation.}
\label{table:interval a}
\end{table}

\paragraph{How to set $\{\delta_i\}$.}
In general, a handy way to set $\{\delta_i\}$ is setting $\delta_i=\delta/(2i^2)$. First, note that $\sum_i\delta_i\leq\delta$ as desired. Second, in high probability bound, we usually get $\log\tfrac{1}{\delta'}$ dependency from the martingale concentration. In such case, we have $\log\tfrac{1}{\delta_i}=\log\tfrac{1}{\delta}+2\log i+1$ where $\log\tfrac{1}{\delta}$ is essential from concentration and $\log i$ is the cost of union bound. See~\autoref{app:pca} for concrete examples.

\section{Details on Streaming \texorpdfstring{$k$}{}-PCA}\label{app:pca}
Let $\cD$ be a distribution over the unit sphere in $\Real^n$ and $\Sigma=\Exp_{\bx\sim\cD}[\bx\bx^\top]$ be its covariance matrix. Given a sequence of i.i.d. samples $\bx_1,\dots,\bx_T$ from $\cD$, the goal of streaming $k$-PCA is to output a $k$ dimensional subspace that is close to the top $k$ eigenspace of $A$ using $O(nk)$ space a given $1\leq k\leq n$. 

We analyze the following Oja's algorithm~\cite{Oja82} which maintains a matrix $W_t\in\Real^{n\times k}$ at time $t$.

\vspace{0.5cm}
\begin{algorithm}[H] 
	\caption{Oja's algorithm for streaming $k$-PCA}\label{alg:PCA}
	    \textbf{Input:} Time parameter $T\in\N$, learning rate $\{\eta_t\}_{t\in\N}$, initial matrix $W_0\in\Real^{n\times k}$, and sequence of $\bx_1,\dots,\bx_T\sim\cD$.\\
	    \For{$t=1,\dots,T$}{
        Let $W_t=(1-\eta_t\bx_t\bx_t^\top)W_{t-1}$.}
	    \textbf{Output:} $QR(W_t)$, an orthonormal basis of the column space of $W_t$.
\end{algorithm}
\vspace{0.5cm}

To measure how well $W_t$ converges to the top $k$ eigenspace, it is standard to use the following objective function.
\begin{equation}\label{eq:pca potential app}
X_t=\|Z^\top QR(W_t)\|_F^2=\|Z^\top W_t(V^\top W_t)^{-1}\|_F^2
\end{equation}
where $QR(\cdot)$ stands for the QR decomposition and $V$ (resp. $Z$) is an orthogonal basis for the eigenspace corresponds to eigenvalues $\lambda_1,\dots,\lambda_k$ (resp. $\lambda_{k+1},\dots,\lambda_n$). Denote $\Sigma=\Exp_{\bx\sim\cD}[\bx\bx^\top]$, $\Sigma_{\leq k}=V\diag(\lambda_1,\dots,\lambda_k)V^\top$, and $\Sigma_{>k}=Z\diag(\lambda_{k+1},\dots,\lambda_n)Z^T$. The goal is to show that $X_t$ converges to $0$ efficiently.

There are two common convergence guarantees for $k$-PCA. The \textit{local convergence} which starts from a good initialization such that $X_0\leq1$ and the \textit{global convergence} where $W_0$ is randomly chosen. On the other hand, it is also common to consider the following two eigengap settings: the \textit{gap-dependent setting} which assumes $\gap=\lambda_k-\lambda_{k+1}>0$ and the \textit{gap-free setting} where the goal is showing that $W_t$ is close to the top eigenspace corresponds to eigenvalue $\geq\lambda_k-\rho$ for some parameter $\rho>0$. In this paper, since the goal is to demonstrate the power of the proposed framework, we focus on the simplest non-trivial setting: the local convergence for gap-dependent $k$-PCA. Specifically, we apply the framework and prove the following theorem.

\begin{theorem}[Local convergence for gap-dependent streaming $k$-PCA]\label{thm:pca details}
Let $\cD$ be a distribution over the unit sphere in $\Real^n$, $\lambda_1\geq\lambda_2\geq\dots\geq\lambda_n\geq0$ be the eigenvalues of its covariance matrix, and let $W_t$ be the output of the Oja's algorithm at time $t\in\N$. Let $\lambda=\lambda_1+\cdots+\lambda_k$ and $\gap=\lambda_k-\lambda_{k+1}$.
For every $\delta\in(0,1)$, there exists an adaptive learning rate such that we have the following strong uniform convergence.
\[
\Pr\left[\exists t\in\N,\ \|Z^\top W_t(V^\top W_t)^{-1}\|_F^2>\frac{1500\lambda(\log\frac{1}{\delta}+2\log\log (t+1))}{\gap^2t}\right]<\delta \, .
\]
Also, there exists an adaptive learning rate such that we have the following last iterate convergence. For every $t\in\N$
\[
\Pr\left[\|Z^\top W_t(V^\top W_t)^{-1}\|_F^2>\frac{2000\lambda\log\frac{1}{\delta}}{\gap^2t}\right]<\delta \, .
\]
\end{theorem}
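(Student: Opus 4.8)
The plan is to run the three-step framework of \autoref{sec:framework} on the potential $X_t=\|Z^\top W_t(V^\top W_t)^{-1}\|_F^2$, following the same template that worked for SGD in \autoref{sec:sgd}, with the eigengap $\gap=\lambda_k-\lambda_{k+1}$ playing the role of the strong-convexity parameter $\lambda$ in the SGD proof, and $\lambda=\lambda_1+\dots+\lambda_k$ playing the role of the squared gradient bound $G^2$. Throughout we work in the local regime $X_0\le 1$.

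\textbf{Step 1 (recursion).} Write $Q_t=W_t(V^\top W_t)^{-1}$, and first argue that $V^\top W_t$ stays well conditioned (so $X_t$ is well defined) as long as $X_{t-1}$ has not left a fixed constant ball; this will be guaranteed \emph{a posteriori} by the stopping time. Expanding the Oja update $W_t=(I-\eta_t\bx_t\bx_t^\top)W_{t-1}$ and linearizing $X_t$ around the fixed point $0$, one uses $\Exp_{\bx\sim\cD}[\bx\bx^\top]=\Sigma$, $V^\top\Sigma V=\diag(\lambda_1,\dots,\lambda_k)$ and $Z^\top\Sigma Z=\diag(\lambda_{k+1},\dots,\lambda_n)$ to see that the conditional drift of $X_t$ contracts by a factor $1-2\eta_t\gap+O(\eta_t^2)$ and picks up an $O(\eta_t^2\lambda)$ second-order term, while the stochastic fluctuation has conditional variance $O(\eta_t^2\lambda X_{t-1})+O(\eta_t^2\lambda^2 X_{t-1}^2)$. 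This gives a one-step recursion $X_t\le H_tX_{t-1}+N_t$ with $H_t=1-2\eta_t\gap+O(\eta_t^2)$ and $N_t$ an explicit $\cF_t$-measurable noise term. Choosing $\eta_t=\Theta(1/(\gap\, t))$ after a constant burn-in $T_0$ so that $H_t\le 1-c/t$ with $c\ge 2$, \autoref{lem:ODE trick} unfolds this into $X_t\le D_t(X_{T_0}+M_t)$ with $D_t=\prod_{t'=T_0+1}^tH_{t'}=O((T_0/t)^2)$ and $M_t=\sum_{t'=T_0+1}^tD_{t'}^{-1}N_{t'}$ adapted.

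\textbf{Step 2 (moment profile and concentration).} Let $\{\Lambda_t\}$ be nonnegative thresholds and $\tau$ the stopping time for $\{X_t>\Lambda_t\}$. On $\{\tau\ge t\}$ we have $X_{t-1}\le\Lambda_{t-1}$, which is exactly what disentangles the ``chicken and egg'': the stopped increment $M_{t\wedge\tau}-M_{(t-1)\wedge\tau}$ then has conditional variance $\sigma^2(t,\{\Lambda_t\})=O(\eta_t^2\lambda\Lambda_t+\eta_t^2)/D_t^2$, conditional mean $\mu(t,\{\Lambda_t\})=O(\eta_t^2\lambda)/D_t$, and bounded difference $B(t,\{\Lambda_t\})=O(\eta_t(\lambda+\sqrt{\lambda\Lambda_t}))/D_t$ — the direct analogue of \autoref{lem:sgd moment details}. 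Plugging in $\eta_t=\Theta(1/(\gap t))$ and $D_t\asymp(T_0/t)^2$ yields closed forms polynomial in $t,T_0,\Lambda_t$; feeding them into the Freedman-type \autoref{lem:martingale concentration} (equivalently \autoref{prop:concentration}) bounds the stopped minor process $M_{t\wedge\tau}$ by an explicit deviation $\Delta$.

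\textbf{Step 3 (improvement) and the main obstacle.} For the last-iterate bound I would use a single interval $[T_0,T]$ with the linear-in-$1/t^2$ threshold $\Lambda_t=T\Lambda/t^2$, exactly as in the SGD last-iterate proof: the pull-out condition of \autoref{lem:pull out} becomes $D_t(X_{T_0}+\Delta)\le\Lambda_t$, and $\Lambda=\Theta(\lambda\log(1/\delta)/\gap^2)$ satisfies it using $X_{T_0}\le 1$ and $D_t\le(T_0/t)^2$, after which the recursion gives $\Pr[X_T>2000\lambda\log(1/\delta)/(\gap^2T)]<\delta$. For the strong uniform bound the single-interval ``red curve'' $\asymp(T_0/t)^2$ is too steep relative to the target $1/t$, so I would invoke the interval analysis of \autoref{sec:framework advanced}: doubling intervals $T_i=2T_{i-1}$, a uniform threshold $\Lambda_t\equiv\Lambda_i$ within interval $i$, and failure budget $\delta_i=\delta/(2i^2)$, so \autoref{prop:concentration} gives the improvement $X_{T_i}\le a_i$ with $a_i=\Theta(\lambda\log(1/\delta_i)/(\gap^2T_i))$, and a union bound over $i$ (plus the almost-sure bound $X_t=O(1)$ on the burn-in) yields $\Pr[\exists t,\ X_t>30000\lambda(\log(1/\delta)+2\log\log(t+1))/(\gap^2t)]<\delta$, the $\log\log t$ being precisely the cost of the $\delta_i$ union bound. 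The genuinely PCA-specific difficulty is entirely in Step 1: extracting a clean \emph{linear} recursion for $X_t=\|Z^\top W_t(V^\top W_t)^{-1}\|_F^2$ from the nonlinear Oja update — controlling $(V^\top W_t)^{-1}$, separating the $\gap$-contraction from the $\bx_t$-dependent cross terms, bounding the $O(\eta_t^2)$ higher-order terms uniformly, and (crucially for the framework) arranging the decomposition so that $\{M_t\}$ is adapted and its conditional variance scales with $X_{t-1}$ rather than a constant, since otherwise the rate degrades below $\lambda/(\gap^2t)$. Steps 2 and 3 are then essentially mechanical transcriptions of the SGD analysis with $(\lambda,G^2)$ replaced by $(\gap,\lambda)$.
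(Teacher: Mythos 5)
Your outline follows the paper's skeleton quite closely: linearize $X_t=\|Z^\top W_t(V^\top W_t)^{-1}\|_F^2$ (the paper does this via Sherman--Morrison, getting $X_t\le(1-2\eta_t\gap)X_{t-1}+N_t$ with conditional mean $O(\eta_t^2\lambda)$ and conditional variance $O(\eta_t^2\lambda\Lambda)$ under the stopping time, exactly the moment profile you predict), then Freedman plus the pull-out lemma, with a $\Lambda_t\propto 1/t^2$ threshold for the last iterate and an interval analysis with budget $\delta/(2i^2)$ for the strong uniform bound. However, there is a genuine gap in the learning-rate schedule and the treatment of the early phase. You propose $\eta_t=\Theta(1/(\gap\, t))$ ``after a constant burn-in $T_0$'', transplanting the SGD setup. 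In SGD this is legitimate because the projection step and the $G$-bounded oracle give the almost-sure bound $X_t\le 4G^2/\lambda^2$, which is what the proof uses both during the burn-in and at interval starts. Oja's iteration has no projection and no almost-sure bound: $X_t$ is only controlled through the stopping time, and the moment profile (\autoref{lem:pca moment details}) additionally requires $\eta_t\le 1/4$, which your schedule violates for all $t\lesssim 1/\gap$. Even if you cap the rate, a constant rate $\eta$ drives the process toward a noise floor of order $\eta\lambda/\gap$, so the local regime $X_t\lesssim 1$ is only preserved when $\eta\lesssim\gap/\lambda$; with $\eta_t\asymp 1/(\gap t)$ this forces a burn-in of length $\Theta(\lambda\log(1/\delta)/\gap^2)$, not a constant, and during that burn-in you have no way to certify $X_{T_0}\le 1$ without running the very stopping-time argument you are deferring.

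This is precisely why the theorem is stated for an \emph{adaptive} learning rate and why the paper does not use $1/(\gap t)$: it takes a piecewise-constant schedule $\eta_t=\gamma_i/(2\gap)$ with $\gamma_1\asymp\gap^2/(\lambda\log(1/\delta))$ (small from the start), halving $\gamma_i$ across intervals whose lengths $\asymp\lambda\log(i^2/\delta)2^i/\gap^2$ double, so that each interval contracts $X$ by a constant factor while the threshold $\Lambda_i=2a_{i-1}$ keeps the moments small; the first interval's length is exactly the $\lambda\log(1/\delta)/\gap^2$ appearing in the final rate, and the last-iterate proof reuses the same geometric schedule with $\Lambda_t\propto T/t_i^2$. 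Your Steps 2 and 3 are therefore not a purely mechanical transcription of SGD with $(\lambda,G^2)\mapsto(\gap,\lambda)$: you must either adopt the paper's small-initial-rate piecewise-constant schedule (or an equivalent shifted rate $\eta_t\asymp \gap/(\lambda\log(1/\delta)+\gap^2 t)$) and re-verify the pull-out and improvement conditions for it, or supply a separate argument that keeps $X_t$ in the local regime during your burn-in --- neither of which is in the proposal as written. The remaining deferred item, the derivation of the linear recursion itself, matches the paper's \autoref{lem:pca linearization} in form, so that part of the plan is sound, though note the bounded difference there is $\eta_t(20\Lambda+12\sqrt{\Lambda}+32\eta_t)$ with no $\lambda$ factor, slightly different from the $O(\eta_t(\lambda+\sqrt{\lambda\Lambda_t}))$ you guessed.
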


\paragraph{Comparison.} The convergence rate of the Oja's algorithm is a well-studied problem~\cite{AL17,DOR15,Shamir16}. For the gap-dependent local convergence, the previous state-of-the-art analysis~\cite{AL17} gives $O(\frac{\lambda(\log\delta^{-1}+\log^3t)}{\gap^2t})$ convergence rate while in~\autoref{thm:pca details} we improve to $O(\frac{\lambda(\log\delta^{-1}+\log\log t)}{\gap^2t})$. Note that there is an information-theoretic lower bound $\Omega(\frac{k\lambda_k}{\gap^2t})$. See~\cite{AL17} for more comparisons with other previous works as well as other settings.

\paragraph{Section structure.} In the rest of this section, we provide the recursion analysis in~\autoref{app:pca moment}, the moment and concentration analysis in~\autoref{sec: pca concentration}, and the improvement analysis in~\autoref{app:pca improvement}.

\subsection{Recursion analysis}\label{app:pca moment}
Here we provide the details on the recursion analysis. The proof of this step looks relatively lengthy because the objective function $X_t$ has an inverse term $(V^\top W_t)^{-1}$. Conceptually, the proofs are straightforward by properly rearranging the terms and applying matrix inequalities (see~\autoref{lem:matrix inequalities}).

\begin{lemma}[Linearization for $k$-PCA]\label{lem:pca linearization}
For any $t\in \N$, we have
\[
X_t \leq (1 - 2\eta_t\gap)X_{t-1} + N_t
\]
with
\begin{align*}
N_t &= 2\eta_t\cdot\left(-\tr(Y_{t-1}^\top Y_{t-1}B_t)+\Exp[\tr(Y_{t-1}^\top Y_{t-1}B_t)]+\tr(Y_{t-1}^\top C_t)-\Exp[\tr(Y_{t-1}^\top C_t)]\right)\\
&+\frac{2\eta_t^2a_t}{1+\eta_ta_t}\cdot\left(\tr(Y_{t-1}^\top Y_{t-1}B_t)-\tr(Y_{t-1}^\top C_t)\right)+\frac{2\eta_t^2}{(1+\eta_ta_{t})^2}\cdot\left(\|Y_{t-1}B_t\|_F^2 +\|C_t\|_F^2\right)
\end{align*}
where
\begin{align*}
&Y_t=Z^\top W_t(V^\top W_t)^{-1}\, ,\ 
&a_t = \bx_t^\top W_{t-1}(V^\top W_{t-1})^{-1}V^\top \bx_t\, ,\\
&B_t = V^\top \bx_t\bx_t^\top W_{t-1}(V^\top W_{t-1})^{-1}\, ,\ 
&C_t = Z^\top \bx_t\bx_t^\top W_{t-1}(V^\top W_{t-1})^{-1} \, .
\end{align*}
In particular, we get the following recursion
\begin{equation}
\label{eq:pca recursion}
X_t\leq\prod_{t'=T_0+1}^t(1-2\eta_{t'}\gap)\cdot(X_{T_0}+M_t)
\end{equation}
where
\[M_t=\sum_{t'=T_0+1}^t\prod_{t''=T_0+1}^{t'}(1-2\eta_{t''}\gap)^{-1}N_{t'}.\]
\end{lemma}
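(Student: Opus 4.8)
The plan is to track the matrix $Y_t := Z^\top W_t(V^\top W_t)^{-1}$, for which $X_t = \|Y_t\|_F^2 = \tr(Y_t^\top Y_t)$, obtain an exact one-step identity for $Y_t$, and then read off the drift by taking conditional expectations. For \textbf{Step 1}, since the columns of $V$ and $Z$ together form an orthonormal basis of $\Real^n$ we have $VV^\top + ZZ^\top = I$, hence $W_{t-1}(V^\top W_{t-1})^{-1} = V + ZY_{t-1}$. Projecting the Oja update onto $V$ and $Z$ (and using that $V^\top W_t$ stays invertible in the local regime) I expect
\[
V^\top W_t = (I + \eta_t B_t)(V^\top W_{t-1}), \qquad Z^\top W_t = (Y_{t-1} + \eta_t C_t)(V^\top W_{t-1}),
\]
with $a_t,B_t,C_t$ exactly as in the statement. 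One then observes that $B_t = (V^\top\bx_t)\bigl(\bx_t^\top(V+ZY_{t-1})\bigr)$ is rank one, so $B_t^2 = a_tB_t$ and $C_tB_t = a_tC_t$; Sherman--Morrison gives $(I+\eta_tB_t)^{-1} = I - \tfrac{\eta_t}{1+\eta_t a_t}B_t$, and cancelling with the two rank-one identities collapses everything to
\[
Y_t = Y_{t-1} + \frac{\eta_t}{1+\eta_t a_t}\bigl(C_t - Y_{t-1}B_t\bigr).
\]

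\textbf{Step 2} is to square this and isolate $N_t$. Taking $\|\cdot\|_F^2$ yields the exact identity
\[
X_t = X_{t-1} + \frac{2\eta_t}{1+\eta_t a_t}\bigl(\tr(Y_{t-1}^\top C_t) - \tr(Y_{t-1}^\top Y_{t-1}B_t)\bigr) + \frac{\eta_t^2}{(1+\eta_t a_t)^2}\,\|C_t - Y_{t-1}B_t\|_F^2 .
\]
I would then (i) write $\tfrac{2\eta_t}{1+\eta_t a_t} = 2\eta_t - \tfrac{2\eta_t^2 a_t}{1+\eta_t a_t}$ so that the correction reproduces the middle group of $N_t$; (ii) bound $\|C_t - Y_{t-1}B_t\|_F^2 \le 2\|Y_{t-1}B_t\|_F^2 + 2\|C_t\|_F^2$ to get the last group of $N_t$; and (iii) add and subtract the $\cF_{t-1}$-conditional expectations inside $2\eta_t(\tr(Y_{t-1}^\top C_t) - \tr(Y_{t-1}^\top Y_{t-1}B_t))$, splitting it into exactly the first (martingale-difference) bracket of $N_t$ plus the deterministic remainder $2\eta_t\bigl(\Exp[\tr(Y_{t-1}^\top C_t)\mid\cF_{t-1}] - \Exp[\tr(Y_{t-1}^\top Y_{t-1}B_t)\mid\cF_{t-1}]\bigr)$.

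\textbf{Step 3} handles the eigengap drift and the unfolding. It remains to show the deterministic remainder is $\le -2\eta_t\gap\,X_{t-1}$, i.e.\ $\Exp[\tr(Y_{t-1}^\top Y_{t-1}B_t)\mid\cF_{t-1}] - \Exp[\tr(Y_{t-1}^\top C_t)\mid\cF_{t-1}] \ge \gap\,\|Y_{t-1}\|_F^2$. Since $B_t,C_t$ are linear in $\bx_t\bx_t^\top$ with $\Exp[\bx_t\bx_t^\top\mid\cF_{t-1}] = \Sigma = V\diag(\lambda_1,\dots,\lambda_k)V^\top + Z\diag(\lambda_{k+1},\dots,\lambda_n)Z^\top$, and since $W_{t-1}(V^\top W_{t-1})^{-1} = V + ZY_{t-1}$ with $V^\top Z = 0$, one gets $\Exp[B_t\mid\cF_{t-1}] = \diag(\lambda_1,\dots,\lambda_k)$ and $\Exp[C_t\mid\cF_{t-1}] = \diag(\lambda_{k+1},\dots,\lambda_n)\,Y_{t-1}$. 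Substituting, the left side equals $\sum_{i,j}(\lambda_j - \lambda_{k+i})\,(Y_{t-1})_{ij}^2 \ge (\lambda_k-\lambda_{k+1})\|Y_{t-1}\|_F^2$, which is the claim. This gives $X_t \le (1-2\eta_t\gap)X_{t-1} + N_t$, and \autoref{eq:pca recursion} then follows from the ODE trick (\autoref{lem:ODE trick}) with $H_{t'} = 1 - 2\eta_{t'}\gap$.

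The main obstacle is the bookkeeping in Steps 1--2: carrying the inverse $(V^\top W_t)^{-1}$ correctly through the update, and organizing the Sherman--Morrison expansion so that the a priori messy terms collapse into precisely the three groups that define $N_t$ — in particular lining up the cancellations $B_t^2 = a_tB_t$, $C_tB_t = a_tC_t$ with the split of $\tfrac{2\eta_t}{1+\eta_t a_t}$. The eigengap estimate in Step 3, the invertibility of $V^\top W_t$, and the final unfolding are routine given the matrix inequalities (\autoref{lem:matrix inequalities}) and \autoref{lem:ODE trick} already in the paper.
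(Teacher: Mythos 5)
Your proposal is correct and follows essentially the same route as the paper's proof: Sherman--Morrison together with the rank-one identity $C_tB_t=a_tC_t$ to obtain the exact update $Y_t=Y_{t-1}+\frac{\eta_t}{1+\eta_ta_t}\left(C_t-Y_{t-1}B_t\right)$, expansion of the Frobenius norm with an AM--GM bound on the quadratic term, the split $\frac{2\eta_t}{1+\eta_ta_t}=2\eta_t-\frac{2\eta_t^2a_t}{1+\eta_ta_t}$ plus centering to isolate the martingale part, the eigengap drift via $\Exp[\bx_t\bx_t^\top\mid\cF_{t-1}]=\Sigma$, and unrolling via \autoref{lem:ODE trick}. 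The only cosmetic difference is that you compute $\Exp[B_t\mid\cF_{t-1}]$ and $\Exp[C_t\mid\cF_{t-1}]$ as explicit diagonal matrices and estimate the gap entrywise, whereas the paper stays in trace form with $\Sigma_{\leq k}$ and $\Sigma_{>k}$; both give the same drift $-2\eta_t\gap\, X_{t-1}$.
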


\begin{proof}
First by Sherman-Morisson formula, we have
\[
(V^\top W_t)^{-1} = (V^\top W_{t-1})^{-1} - \frac{\eta_t}{1+\eta_t a_t}(V^\top W_{t-1})^{-1}B_{t} \, .
\]
Now we have
\begin{align}
Y_{t} &= Z^\top W_{t-1}(V^\top W_t)^{-1} + \eta_tZ^\top \bx_t\bx_t^\top W_{t-1}(V^\top W_t)^{-1}\nonumber\\
&= Y_{t-1} - \frac{\eta_t}{1+\eta_ta_{t}}Y_{t-1}B_{t} + \eta_tC_{t} - \frac{\eta_t^2}{1+\eta_ta_t}C_{t}B_{t} \, . \nonumber
\intertext{Because $C_tB_t = a_tC_t$, we have}
&= Y_{t-1} - \frac{\eta_t}{1+\eta_ta_{t}}Y_{t-1}B_{t} + \left(\eta_t - \frac{\eta_t^2a_t}{1+\eta_ta_t}\right)C_{t}\nonumber\\
&=  Y_{t-1} - \frac{\eta_t}{1+\eta_ta_{t}}Y_{t-1}B_{t} + \frac{\eta_t}{1+\eta_ta_{t}}C_{t}\label{eq: inverse 1} \, .
\end{align}
Next, expand the square of the Frobenius norm of~\autoref{eq: inverse 1} as follows.
\begin{align}
\|Y_t\|_F^2 & =\tr(Y_t^\top Y_t ) = \tr(Y_{t-1}^\top Y_{t-1}) - \frac{2\eta_t}{1+\eta_ta_{t}}\cdot\left( \tr(Y_{t-1}^\top Y_{t-1}{B_t}) + \tr(Y_{t-1}^\top{C_t} )\right) \nonumber\\&+ \frac{\eta_t^2}{(1+\eta_ta_{t})^2}\cdot\left(\|Y_{t-1}B_t\|_F^2 - 2\tr(Y_{t-1}^\top{C_t}B_t) + \|C_t\|_F^2\right) \, .\nonumber
\intertext{By AM-GM inequality for matrix (see~\autoref{lem:matrix inequalities}), we have}
&\leq \tr(Y_{t-1}Y_{t-1}^\top ) - \frac{2\eta_t}{1+\eta_ta_{t}}\cdot\left(\tr(Y_{t-1}^\top Y_{t-1}{B_t}) + \tr(Y_{t-1}^\top{C_t})\right) \nonumber\\&+ \frac{2\eta_t^2}{(1+\eta_ta_{t})^2}\cdot\left(\|Y_{t-1}B_t\|_F^2 + \|C_t\|_F^2\right) \, .\nonumber
\intertext{Plus and minus $2\eta_t\cdot\left(\tr(C_t^\top Y_{t-1})-\tr(Y_{t-1}^\top Y_{t-1}{B_t})\right)$, we have }
&\leq \tr(Y_{t-1}Y_{t-1}^\top ) - 2\eta_t\cdot\left( \tr(Y_{t-1}^\top Y_{t-1}{B_t}) - \tr(Y_{t-1}^\top{C_t})\right) \nonumber\\
&+\frac{2\eta_t^2a_t}{1+\eta_ta_t}\cdot\left(\tr(Y_{t-1}^\top Y_{t-1}B_t)-\tr(C_t^\top Y_{t-1})\right)+\frac{2\eta_t^2}{(1+\eta_ta_{t})^2}\cdot\left(\|Y_{t-1}B_t\|_F^2 +\|C_t\|_F^2\right) \, .\label{eq: linearization 1}
\end{align}
Finally, let us calculate the expectation of the second term as follows. Recall that $\Exp_{\bx\sim\cD}[\bx\bx^\top]=\Sigma$ is the covariance matrix.
\begin{align}
\Exp[\tr(Y_{t-1}^\top Y_{t-1}B_t) | \mathcal{F}_{t-1}] &= \tr(Y_{t-1}^\top Y_{t-1}V^\top \Sigma W_{t-1}(V^\top W_{t-1})^{-1})\nonumber\\
&= \tr(Y_{t-1}^\top Y_{t-1}\Sigma_{\leq k}V^\top W_{t-1}(V^\top W_{t-1})^{-1})\nonumber\\
&= \tr(Y_{t-1}^\top Y_{t-1}\Sigma_{\leq k})\geq \lambda_k\tr(Y_{t-1}^\top Y_{t-1})\label{eq: exp top eigen}
\intertext{and}
\Exp[\tr(Y_{t-1}^\top C_t) | \mathcal{F}_{t-1}] &= \tr(Y_{t-1}^\top Z^\top \Sigma W_{t-1}(V^\top W_{t-1})^{-1})\nonumber\\
&= \tr(Y_{t-1}^\top \Sigma_{> k}Z^\top W_{t-1}(V^\top W_{t-1})^{-1})\nonumber\\
&= \tr(Y_{t-1}^\top \Sigma_{> k}Y_{t-1})\leq \lambda_{k+1}\tr(Y_{t-1}^\top Y_{t-1}) \, .\label{eq: exp bottom eigen}
\end{align}
Combining~\autoref{eq: linearization 1},~\autoref{eq: exp top eigen}, and~\autoref{eq: exp bottom eigen}, as $X_t=\|Y_t\|_F^2$, we have
\[
X_t\leq (1-2\eta_t\gap)\cdot X_{t-1} + N_t
\]
as desired. By unrolling the recursion, we obtained~\autoref{eq:pca recursion}.
\end{proof}

\subsection{Moment and concentration analysis}
\label{sec: pca concentration}
In this section, we give the following lemma provides the moment profile for $\{X_t\}$.

\begin{lemma}[Moment profile for $k$-PCA]\label{lem:pca moment details}
Consider the setting in~\autoref{thm:pca details} with learning rate $0<\eta_t\leq1/10$ for all $t\in\N$. Let $0.01\geq \Lambda_t > 0$ and $\tau$ is the stopping time for the event $\lbrace X_t > \Lambda_t \rbrace$. For every $T_0+1\leq t\leq T$, we have the following.
\begin{itemize}
\item (Bounded difference) $\left|\mathbf{1}_{\tau\geq t}N_t\right|  \leq\eta_t\cdot(5\sqrt{\Lambda_t}+\eta_t)$.
\item (Conditional expectation) $\left|\Exp\left[\mathbf{1}_{\tau\geq t}N_t\, |\, \mathcal{F}_{t-1}\right]\right|\leq 2\eta_t^2\lambda$.
\item (Conditional variance) $\left|\Var\left[\mathbf{1}_{\tau\geq t}N_t\, |\, \mathcal{F}_{t-1}\right]\right|\leq 45\eta_t^2\lambda\Lambda + 30\eta_t^4\lambda$.
\end{itemize}
Specifically, if $\eta_t = \gamma/ 2\gap$, the following functions $(B_{T_0},\mu_{T_0},\sigma^2_{T_0})$ form a moment profile for $\{M_t\}$ and $\Lambda$

\[
B_{T_0}(t,\Lambda) = \frac{10\gap\gamma\sqrt{\Lambda} + \gamma^2}{4\gap^2(1-\gamma)^{t-T_0}}\, ,~\mu_{T_0}(t,\Lambda)= \frac{\lambda\gamma^2}{2\gap^2 (1-\gamma)^{t-T_0}}\,  \text{, and }
\sigma_{T_0}^2(t,\Lambda) =  \frac{12\gamma^2\lambda\Lambda\gap^2 + 2\gamma^4\lambda}{\gap^4(1-\gamma)^{2(t-T_0)}}.\, 
 \]
\end{lemma}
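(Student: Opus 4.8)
The plan is to start from the explicit expression for $N_t$ in \autoref{lem:pca linearization} and bound, term by term, the stopped increment $\mathbf{1}_{\tau\ge t}N_t$; the ``in particular'' part then follows by substituting $\eta_t=\gamma/(2\gap)$. The workhorse is a geometric rewriting: since the columns of $[V\ Z]$ form an orthonormal basis of $\Real^n$, the oblique projection $P_{t-1}:=W_{t-1}(V^\top W_{t-1})^{-1}$ equals $V+ZY_{t-1}$, and writing $\bx_t=Vu_t+Zv_t$ with $\|u_t\|^2+\|v_t\|^2=\|\bx_t\|^2=1$ yields the clean forms $a_t=\|u_t\|^2+v_t^\top Y_{t-1}u_t$ and the rank-one matrices $B_t=u_t(u_t+Y_{t-1}^\top v_t)^\top$, $C_t=v_t(u_t+Y_{t-1}^\top v_t)^\top$. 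Two facts about the stopping time are used throughout: $\{\tau\ge t\}\in\cF_{t-1}$, and on this event $\|Y_{t-1}\|\le\|Y_{t-1}\|_F=\sqrt{X_{t-1}}\le\sqrt\Lambda$. Combined with $\|u_t\|,\|v_t\|\le1$ this gives $|a_t|\le1+\sqrt\Lambda$, $\|B_t\|_F\le\|u_t\|(\|u_t\|+\sqrt\Lambda\|v_t\|)$, $\|C_t\|_F\le\|v_t\|(\|u_t\|+\sqrt\Lambda\|v_t\|)$, and (using also $\eta_t\le1/4$) $1+\eta_t a_t\ge1/2$, so $(1+\eta_t a_t)^{-1}$, $(1+\eta_t a_t)^{-2}$ and $a_t/(1+\eta_t a_t)$ are all $O(1)$.

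\paragraph{Bounded difference.} On $\{\tau\ge t\}$ we feed these estimates into the five pieces of $N_t$, using the matrix Cauchy--Schwarz, H\"older and AM--GM inequalities of \autoref{lem:matrix inequalities} and $X_{t-1}\le\Lambda$. The leading $2\eta_t(\cdot)$ block is $O(\eta_t)\cdot(\Lambda+\sqrt\Lambda)$ since $|\tr(Y_{t-1}^\top Y_{t-1}B_t)|\le\|Y_{t-1}\|_F^2\|B_t\|$, $|\tr(Y_{t-1}^\top C_t)|\le\|Y_{t-1}\|_F\|C_t\|_F$, and the conditional expectations inside it obey the same almost-sure bounds; the $\eta_t^2 a_t/(1+\eta_t a_t)$ block is $O(\eta_t^2)\cdot(\Lambda+\sqrt\Lambda)$; the $\eta_t^2/(1+\eta_t a_t)^2$ block is $O(\eta_t^2)\cdot(1+\Lambda)$ because $\|Y_{t-1}B_t\|_F^2\le\Lambda\|B_t\|^2$ and $\|C_t\|_F^2\le(1+\sqrt\Lambda)^2$. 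Collecting and fixing constants gives $|\mathbf{1}_{\tau\ge t}N_t|\le\eta_t(20\Lambda+12\sqrt\Lambda+32\eta_t)$.

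\paragraph{Conditional expectation (the crux).} $N_t$ was built so that its leading $2\eta_t(\cdot)$ block is a martingale increment: $\Exp[-\tr(Y_{t-1}^\top Y_{t-1}B_t)+\Exp[\tr(Y_{t-1}^\top Y_{t-1}B_t)\mid\cF_{t-1}]\mid\cF_{t-1}]=0$, and likewise for the $C_t$ part. Since $\{\tau\ge t\}\in\cF_{t-1}$, conditioning annihilates that block, leaving only the two $\eta_t^2$ blocks, bounded by $O(\eta_t^2)\,\Exp[\,|\tr(Y_{t-1}^\top Y_{t-1}B_t)|+|\tr(Y_{t-1}^\top C_t)|+\|Y_{t-1}B_t\|_F^2+\|C_t\|_F^2\mid\cF_{t-1}\,]$. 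The spectral quantity $\lambda=\lambda_1+\cdots+\lambda_k$ enters precisely here, via
\[
\Exp\!\left[P_{t-1}^\top\bx_t\bx_t^\top P_{t-1}\,\middle|\,\cF_{t-1}\right]=P_{t-1}^\top\Sigma P_{t-1}=\diag(\lambda_1,\dots,\lambda_k)+Y_{t-1}^\top\diag(\lambda_{k+1},\dots,\lambda_n)Y_{t-1},
\]
whose trace is $\lambda+\tr(Y_{t-1}^\top\diag(\lambda_{k+1},\dots,\lambda_n)Y_{t-1})\le\lambda(1+\Lambda)$. Writing $B_t=V^\top\bx_t\bx_t^\top P_{t-1}$, $C_t=Z^\top\bx_t\bx_t^\top P_{t-1}$ and using $\|V^\top\bx_t\|^2,\|Z^\top\bx_t\|^2\le1$ gives $\Exp[\|B_t\|_F^2\mid\cF_{t-1}],\Exp[\|C_t\|_F^2\mid\cF_{t-1}]\le\lambda(1+\Lambda)$, hence $\Exp[\|Y_{t-1}B_t\|_F^2\mid\cF_{t-1}]\le\Lambda\lambda(1+\Lambda)$ and, via matrix H\"older, $\Exp[|\tr(Y_{t-1}^\top Y_{t-1}B_t)|\mid\cF_{t-1}]$ and $\Exp[|\tr(Y_{t-1}^\top C_t)|\mid\cF_{t-1}]$ are both $O(\lambda(1+\Lambda))$. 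Summing with $\Lambda\le1$ collapses everything to $\eta_t^2\lambda$ times a constant-degree polynomial in $\Lambda$, giving $|\Exp[\mathbf{1}_{\tau\ge t}N_t\mid\cF_{t-1}]|\le\eta_t^2\lambda(52\Lambda+4)$. Spotting the exact cancellation and carrying the factor $\lambda$ (rather than the trivial $\tr\Sigma=1$) through the second-moment estimates is the main obstacle; everything else is mechanical.

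\paragraph{Conditional variance and specialization.} Bound $\Var[\mathbf{1}_{\tau\ge t}N_t\mid\cF_{t-1}]\le\Exp[\mathbf{1}_{\tau\ge t}N_t^2\mid\cF_{t-1}]$ and expand the square. The dominant contribution is $4\eta_t^2\Exp[(\tr(Y_{t-1}^\top Y_{t-1}B_t))^2+(\tr(Y_{t-1}^\top C_t))^2\mid\cF_{t-1}]$, which by $(\tr(Y_{t-1}^\top Y_{t-1}B_t))^2\le\Lambda^2\|B_t\|_F^2$, $(\tr(Y_{t-1}^\top C_t))^2\le\Lambda\|C_t\|_F^2$ and the identity above is $O(\eta_t^2\lambda(\Lambda^2+\Lambda))$; the residual $\eta_t^3$ and $\eta_t^4$ terms are absorbed using the almost-sure bounds from the bounded-difference step and AM--GM, yielding $\Var[\mathbf{1}_{\tau\ge t}N_t\mid\cF_{t-1}]\le\eta_t^2\lambda(512\Lambda^2+624\Lambda+512\eta_t^2)$. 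For the ``in particular'' part, $\eta_t=\gamma/(2\gap)$ makes $\prod_{t''=T_0+1}^{t'}(1-2\eta_{t''}\gap)=(1-\gamma)^{t'-T_0}$, so $M_{t\wedge\tau}-M_{(t-1)\wedge\tau}=\mathbf{1}_{\tau\ge t}(1-\gamma)^{-(t-T_0)}N_t$; multiplying the three bounds by $(1-\gamma)^{-(t-T_0)}$ (and by its square for the variance) and simplifying with $\Lambda\le1$ and $\eta_t\le1/4$ (so that $\eta_t(20\Lambda+12\sqrt\Lambda+32\eta_t)\le40\eta_t$, $52\Lambda+4\le56$, and $512\Lambda^2+624\Lambda\le1136\Lambda$) reproduces exactly the stated $B_{T_0,T_1}$, $\mu_{T_0,T_1}$, $\sigma^2_{T_0,T_1}$, i.e.\ a moment profile for $\{M_t\}$ in the sense of \autoref{def:moment profile}.
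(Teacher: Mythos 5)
Your proposal follows essentially the same route as the paper's proof: start from the $N_t$ of \autoref{lem:pca linearization}, use $\{\tau\ge t\}\in\cF_{t-1}$ and $X_{t-1}\le\Lambda$ on that event to get almost-sure control of $a_t$, $B_t$, $C_t$, observe that the leading $2\eta_t(\cdot)$ block is centered so conditioning annihilates it, bring in $\lambda$ through $\Exp[\|B_t\|_F^2\,|\,\cF_{t-1}],\Exp[\|C_t\|_F^2\,|\,\cF_{t-1}]\le 2\lambda$ (your $P_{t-1}=V+ZY_{t-1}$, $\bx_t=Vu_t+Zv_t$ parametrization is a cleaner way of organizing the same computation), and finally specialize $\eta_t=\gamma/(2\gap)$, $\Lambda\le1$ to read off $(B_{T_0},\mu_{T_0},\sigma^2_{T_0})$. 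The bounded-difference, variance, and specialization paragraphs are sound. (Like the paper, your bounds $1+\eta_ta_t\ge 1/2$ and $|a_t|\le 2$ implicitly use $\Lambda\le1$; this is harmless since that is how the lemma is invoked.)

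One step in your conditional-expectation paragraph is not justified as stated. After discarding the $a_t/(1+\eta_ta_t)$ factor as $O(1)$, you claim that $\Exp[|\tr(Y_{t-1}^\top Y_{t-1}B_t)|\,|\,\cF_{t-1}]$ and $\Exp[|\tr(Y_{t-1}^\top C_t)|\,|\,\cF_{t-1}]$ are $O(\lambda(1+\Lambda))$ ``via matrix H\"older.'' The pointwise H\"older bound gives only $|\tr(Y_{t-1}^\top Y_{t-1}B_t)|\le\Lambda\|B_t\|_F$, and taking conditional expectations then yields $\Lambda\,\Exp[\|B_t\|_F\,|\,\cF_{t-1}]\le\Lambda\sqrt{\lambda(1+\Lambda)}$, which scales like $\sqrt{\lambda}$, not $\lambda$, and would spoil the stated $\eta_t^2\lambda(52\Lambda+4)$ bound when $\lambda$ is small. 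The claim itself is true, but you need one of two fixes: either keep the $a_t$ factor and use $|a_t|\le\|B_t\|_F$ so that $\Exp[|a_t\tr(Y_{t-1}^\top Y_{t-1}B_t)|\,|\,\cF_{t-1}]\le\Lambda\,\Exp[\|B_t\|_F^2\,|\,\cF_{t-1}]\le 2\lambda\Lambda$, together with the AM--GM split $|\tr(Y_{t-1}^\top C_t)|\le\tfrac12\|V^\top\bx_t\|_2^2+\tfrac32\tr(Y_{t-1}^\top Z^\top\bx_t\bx_t^\top ZY_{t-1})$ whose conditional expectation is $\tfrac12\lambda+\tfrac32\lambda_{k+1}X_{t-1}$ (this is exactly what the paper does); or apply Cauchy--Schwarz inside the expectation, writing each trace as an inner product of two random vectors (e.g.\ $\tr(Y_{t-1}^\top C_t)=\langle Y_{t-1}^\top v_t,\,u_t+Y_{t-1}^\top v_t\rangle$) each of whose conditional second moments carries a factor $\lambda$, so the product is $O(\lambda)$. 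With either repair the rest of your argument goes through and matches the lemma's constants.
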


\begin{proof}
Let us start with three useful facts we are going to heavily apply throughout the proof. First, $I=VV^\top+ZZ^\top$ because $V$ and $Z$ form an orthonormal eigenbasis for the covariance matrix. Second, the following show that $\mathbf{1}_{\tau\geq t}a_t$ is small almost surely.
\begin{align*}
|\mathbf{1}_{\tau\geq t}a_t|&=|\mathbf{1}_{\tau\geq t}\bx_t^\top(VV^\top+ZZ^\top)W_{t-1}(V^\top W_{t-1})^{-1}V^\top \bx_t|\\
&\leq\|V^\top\bx_t\|_2^2+|\mathbf{1}_{\tau\geq t}\bx_t^\top ZY_{t-1}V^\top\bx_t| \, .
\intertext{Because $\|\bx_t\|_2=1$, we have}
&\leq1+\|\mathbf{1}_{\tau\geq t}Y_{t-1}\|_F\leq 1.1 \, .
\end{align*}
Third, let $A_t = x_t^\top W_{t-1}(V^\top W_{t-1})^{-1}$. We have
\begin{align}
\|\mathbf{1}_{\tau\geq t}A_{t-1}\|_2 &= \|\mathbf{1}_{\tau\geq t}x_t^\top(VV^\top + ZZ^\top)W_{t-1}(V^\top W_{t-1})^{-1}\|_2\nonumber\\
&\leq \|x_t^\top V\|_2 + \|\mathbf{1}_{\tau\geq t}x_t^\top ZY_{t-1}\|_2\nonumber\\
&\leq 1 + \|\mathbf{1}_{\tau\geq t}Y_{t-1}\|_F < 1.1 \, .
\end{align}
Thus, as $\eta_t\leq 0.1$, we have
\begin{align*}
\mathbf{1}_{\tau\geq t}\frac{2\eta_t^2a_t}{1+\eta_ta_t}\leq3\eta_t^2 \ \text{ and }\ \mathbf{1}_{\tau\geq t}\frac{2\eta_t^2}{(1+\eta_ta_t)^2}\leq3\eta_t^2 \, .
\end{align*}
\begin{itemize}
\item (Bounded difference) First, observe that $\|\mathbf{1}_{\tau\geq t}B_t\|_F$ and $\|\mathbf{1}_{\tau\geq t}C_t\|_F$ are small almost surely. Concretely, by Cauchy-Schwarz inequality and the fact that $B_t, C_t$ are rank $1$ matrix, we have
\begin{align*}\label{eq: BC bounded diff}
 &\|\mathbf{1}_{\tau\geq t}B_t\|_F = \|\mathbf{1}_{\tau\geq t}B_t\| = \|V^\top x_t\|_2\|x_t^\top A_{t-1}\|_2 < 1.1\, ,\\ &\|\mathbf{1}_{\tau\geq t}C_t\|_F =\|\mathbf{1}_{\tau\geq t}C_t\| = \|Z^\top x_t\|_2\|x_t^\top A_{t-1}\|_2 < 1.1\, .
\end{align*}
Thus, by matrix Cauchy-Schwarz inequality (see~\autoref{lem:matrix inequalities}), we have
\begin{align*}
|\mathbf{1}_{\tau\geq t}\tr(Y_{t-1}^\top C_t)|&\leq\|\mathbf{1}_{\tau\geq t}Y_{t-1}\|_F\cdot\|\mathbf{1}_{\tau\geq t}C_t\|_F\leq 1.1\sqrt{\Lambda_t} \, .
\end{align*}
By matrix Holder's inequality (see~\autoref{lem:matrix inequalities}), the fact that $\|AA^\top\|_1 = \|A\|_F^2, \|A\|_\infty = \|A\|$ and for rank $1$ matrix $\|AA^T\| = \|A\|^2 = \|A\|_F^2$, we have

\begin{align*}
&|\mathbf{1}_{\tau\geq t}\tr(Y_{t-1}^\top Y_{t-1}B_t)|\leq\mathbf{1}_{\tau \geq t}\|Y_{t-1}^\top Y_{t-1}\|_1\|B_t\|_\infty = \mathbf{1}_{\tau \geq t}\|Y_{t-1}\|_F^2\|B_t\|_F\leq1.1\Lambda_t  \, \text{, and}\\
&\|\mathbf{1}_{\tau\geq t}Y_{t-1}B_t\|_F^2\leq\mathbf{1}_{\tau \geq t}\|Y_{t-1}Y_{t-1}^\top\|_1\|B_tB_t^\top\|_\infty\leq \mathbf{1}_{\tau \geq t}\|Y_{t-1}\|_F^2\|B_t\|^2_F\leq1.21\Lambda_t  \, .
\end{align*}

By triangle inequality and $\eta_t\leq1/10$, we have
\[
|\mathbf{1}_{\tau\geq t}N_t|\leq\eta_t\cdot(5\sqrt{\Lambda_t}+\eta_t)   \, .
\]

\item (Conditional expectation) By reusing the inequalities in the calculation of bounded difference, we have
\begin{align*}
&\left|\mathbf{1}_{\tau\geq t}\Exp\left[\frac{2\eta_t^2a_t}{1+\eta_ta_t}\cdot\tr(Y_{t-1}^\top Y_{t-1}B_t)+\frac{2\eta_t^2}{(1+\eta_ta_t)^2}\cdot(\|Y_{t-1}B_t\|_F^2+\|C_t\|_F^2)\, |\, \cF_{t-1}\right]\right|\\
\leq\ &\eta_t^2\Lambda\cdot\left(3\Exp\left[\|\mathbf{1}_{\tau\geq t}a_tB_t\|_F\, |\, \cF_{t-1}\right]+3\Exp\left[\|\mathbf{1}_{\tau\geq t}B_t\|_F^2\, |\, \cF_{t-1}\right]+3\Exp\left[\|\mathbf{1}_{\tau\geq t}C_t\|_F^2\, |\, \cF_{t-1}\right] \right) \, .
\end{align*}
Note that $a_t^2=\tr(B_t^\top B_t)$ and thus $|a_t|=\|B_t\|_F$. Namely, $\|a_tB_t\|_F=\|B_t\|_F^2$. Also, by matrix inequalities (see~\autoref{lem:matrix inequalities}), we have
\begin{align*}
\Exp[\|\mathbf{1}_{\tau\geq t}B_t\|_F^2\, |\, \cF_{t-1}]&\leq \Exp[\mathbf{1}_{\tau\geq t}\|x_t^\top A_{t-1}\|_F^2\|V^\top x_t\|_F^2\, |\, \cF_{t-1}]\\
&\leq 1.21\Exp[\|V^\top x_t\|_F^2\, |\, \cF_{t-1}] = 1.21\lambda
\end{align*}
and
\begin{align}
\Exp[\|\mathbf{1}_{\tau\geq t}C_t\|_F^2 | \mathcal{F}_{t-1}] &\leq \Exp[\mathbf{1}_{\tau\geq t}\|x_tA_{t-1}\|_F^2\|Z^\top x_t\|_F^2 | \mathcal{F}_{t-1}]\nonumber\\
&\leq \Exp[\mathbf{1}_{\tau\geq t}\|x_tA_{t-1}\|_F^2 | \mathcal{F}_{t-1}]\nonumber\\
&= \mathbf{1}_{\tau\geq t}Tr(A_{t-1}^T\Sigma A_{t-1})\nonumber\\
&= Tr(A_{t-1}^T \Sigma_{\leq k}  A_{t-1}) + \mathbf{1}_{\tau\geq t}Tr(A_{t-1}^T\Sigma_{> k} A_{t-1})\nonumber\\
&\leq 1.21\lambda_k + 1.21\lambda_{k+1} \leq 2.5\lambda \, .\nonumber
\end{align}

As for the $\tr(Y_{t-1}^\top C_t)$ term, use the identity $I=VV^\top+ZZ^\top$, we have
\begin{align*}
|\tr(Y_{t-1}^\top C_t)|&=|\tr(Y_{t-1}^\top Z^\top\bx_t\bx_t^\top W_{t-1}(V^\top W_{t-1})^{-1})|\\
&=|\tr(Y_{t-1}^\top Z^\top\bx_t\bx_t^\top(VV^\top+ZZ^\top)W_{t-1}(V^\top W_{t-1})^{-1})|\\
&\leq|\tr(Y_{t-1}^\top Z^\top\bx_t\bx_t^\top V)|+|\tr(Y_{t-1}^\top Z^\top\bx_t\bx_t^\top ZY_{t-1})| \, .
\intertext{By the matrix AM-GM inequality, we have}
&\leq\frac{1}{2}\|V^\top\bx_t\|_2^2+\frac{3}{2}\tr(Y_{t-1}^\top Z^\top\bx_t\bx_t^\top ZY_{t-1}) \, .
\end{align*}
Thus,
\begin{align*}
\Exp\left[\mathbf{1}_{\tau\geq t}\left|\frac{2\eta_t^2a_t}{1+\eta_ta_t}\tr(Y_{t-1}^\top C_t)\right|\, |\, \cF_{t-1}\right]&\leq \mathbf{1}_{\tau\geq t} 1.5\eta_t^2\cdot\Exp\left[\|V^\top\bx_t\|_2^2+3\tr(Y_{t-1}^\top Z^\top\bx_t\bx_t^\top ZY_{t-1}) \, |\, \cF_{t-1}\right]\\
&\leq \mathbf{1}_{\tau\geq t}1.5\eta_t^2\cdot(\lambda+3\lambda_{k+1}X_{t-1})\\
&\leq 1.5\eta_t^2\cdot(\lambda+3\lambda_{k+1}\Lambda)\, \leq 1.6\eta_t^2\lambda .
\end{align*}
To sum up, we have
\[
\Exp[|\mathbf{1}_{\tau\geq t}N_t| \, |\, \cF_{t-1}]\leq2\eta_t^2\lambda \, .
\]

\item (Conditional variance) Let us start with a rough estimation as follows.
\begin{align*}
\Var[ |\mathbf{1}_{\tau\geq t}N_t|\, |\, \cF_{t-1}]&\leq17\eta_t^2\mathbf{1}_{\tau\geq t}\cdot\Exp\left[\tr(Y_{t-1}^\top Y_{t-1}B_t)^2+\tr(Y_{t-1}^\top C_t)^2\, |\, \cF_{t-1}\right]\\
&+11\eta_t^4\mathbf{1}_{\tau\geq t}\cdot\Exp\left[\|Y_{t-1}B_t\|_F^4+ \|C_t\|_F^4 \, |\, \cF_{t-1}\right] \, .
\end{align*}
By reusing the previous calculation, we have
\begin{align*}
&\Exp[\mathbf{1}_{\tau\geq t}\tr(Y_{t-1}^\top Y_{t-1}B_t)^2\, |\, \cF_{t-1}]\leq \Exp[\mathbf{1}_{\tau\geq t}\|B_t\|_F^2\|Y_{t-1}\|_F^4\, |\, \cF_{t-1}]\leq 2\lambda \Lambda_t^2\leq 0.02\lambda\Lambda_t \, ,\\
&\Exp[\mathbf{1}_{\tau\geq t}\tr(Y_{t-1}^\top C_t)^2\, |\, \cF_{t-1}]\leq \Exp[\mathbf{1}_{\tau\geq t}\|Y_{t-1}\|_F^2\|C_t\|_F^2\, |\, \cF_{t-1}]\leq 2.5\lambda \Lambda\, ,\\
&\Exp[\mathbf{1}_{\tau\geq t}\|Y_{t-1}B_t\|_F^2\, |\, \cF_{t-1}]\leq\Exp[\mathbf{1}_{\tau\geq t}\|Y_{t-1}\|_F^2\|B_t\|^2_F\, |\, \cF_{t-1}]\leq 2\lambda \Lambda \, \text{, and}\\
&\Exp[\mathbf{1}_{\tau\geq t}\|C_t\|_F^2\, |\, \cF_{t-1}]\leq2.5\lambda \, .
\end{align*}

Thus, we have
\[
\Var[\|\mathbf{1}_{\tau\geq t}N_t|\, |\, \cF_{t-1}]\leq 45\eta_t^2\lambda\Lambda+ 30\eta_t^4\lambda \, .
\]
\end{itemize}
\end{proof}

\subsection{Improvement analysis}\label{app:pca improvement}

\subsubsection{Strong uniform convergence}

\begin{lemma}[Improvement analysis for $k$-PCA]\label{lem:pca concentration details}
Let $\{X_t\}$ be the stochastic process described in~\autoref{eq:pca potential app}. For every $A_0>A_1>0$, $\delta'>0$, $0<\Lambda\leq1$, and $1\leq T_0<T_1\in\N$, let
\[
\Delta=\frac{\gamma\lambda\log\frac{1}{\delta'}}{\gap^2 (1-\gamma)^{T_1 - T_0}}\left(\sqrt{\frac{48\gap^2\Lambda}{\gamma\lambda\log\frac{1}{\delta'}}} + \sqrt{\frac{8\gamma}{\lambda\log\frac{1}{\delta'}}} + 12\right) \, .
\]
Suppose that we have $A_0 + \Delta < \Lambda$ and $(1-\gamma)^{T_1 - T_0} \Lambda < A_1$. Then, 
\[
\Pr\left[\exists{T_0 + 1\leq t\leq T_1} \, ,\ X_t > (1-\gamma)^{t - T_0} \Lambda \ \middle| \ X_{T_0} \leq A_0\right] < \delta' \, .
\]
In particular, the above implies $\Pr[X_{T_1}>A_1\ |\ X_{T_0}\leq A_0]\leq\delta'$.
\end{lemma}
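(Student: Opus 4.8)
The plan is to read this off the generic improvement proposition (\autoref{prop:concentration}), instantiated with the PCA recursion \autoref{eq:pca recursion} and the moment profile of \autoref{lem:pca moment details}, for the constant learning rate $\eta_t\equiv\gamma/(2\gap)$. With this choice $1-2\eta_t\gap=1-\gamma$, so the dominating factor is $D_t=\prod_{t'=T_0+1}^{t}(1-2\eta_{t'}\gap)=(1-\gamma)^{t-T_0}$; assuming $\gamma$ is small enough that $\eta_t\le 1/4$ (which holds in the eventual applications since $\gamma\ll\gap\le 1$), the three functions $B_{T_0},\mu_{T_0},\sigma_{T_0}^2$ of \autoref{lem:pca moment details} form a moment profile for the stopping time $\tau$ of $\{X_t>\Lambda\}$. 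From here there are only two things to do: bound the deviation that \autoref{lem:martingale concentration} produces for this profile by the closed form stated for $\Delta$, and observe that the two hypotheses on $\Delta$ are precisely the pull-out and improvement conditions of \autoref{prop:concentration}.

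First I would compute the deviation. By \autoref{lem:martingale concentration} it equals
\[
2\max\Bigl\{\sqrt{\textstyle\sum_{t'=T_0+1}^{T_1}\sigma_{T_0}^2(t',\Lambda)\log\tfrac1{\delta'}},\ 2\max_{t'}B_{T_0}(t',\Lambda)\log\tfrac1{\delta'}\Bigr\}+\sum_{t'=T_0+1}^{T_1}\mu_{T_0}(t',\Lambda),
\]
and each piece is a geometric sum: $\sum_{t'=T_0+1}^{T_1}(1-\gamma)^{-2(t'-T_0)}\le(1-\gamma)^{-2(T_1-T_0)}/(1-(1-\gamma)^2)\le(1-\gamma)^{-2(T_1-T_0)}/\gamma$, and similarly with exponent $-(t'-T_0)$. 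Substituting $\eta_t=\gamma/(2\gap)$ (so $\eta_t^2\lambda=\gamma^2\lambda/(4\gap^2)$), splitting $\sqrt{1136\Lambda+512\eta_t^2}\le\sqrt{1136\Lambda}+\sqrt{512}\,\eta_t$, bounding the $\max$ by a sum, and pulling out the common factor $\gamma\lambda\log(1/\delta')/(\gap^2(1-\gamma)^{T_1-T_0})$, one obtains an upper bound of exactly the stated form for $\Delta$, with the constants $568,128,94$ absorbing the numerical factors $1136,512,40,56$ together with the $2$'s from \autoref{lem:martingale concentration} and the geometric-series constants. I expect this bookkeeping to be the only real work, and the subtle point is ensuring that after summation every term carries no worse than the single power $(1-\gamma)^{-(T_1-T_0)}$ — this is what forces the $1/\gamma$ from the geometric series to cancel against one of the two factors of $\eta_t$.

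Finally I would invoke \autoref{prop:concentration} started from $T_0$ rather than $0$. Since $D_t=(1-\gamma)^{t-T_0}$ is decreasing and $1-\gamma<1$, the pull-out condition $D_t(A_0+\Delta)\le\Lambda$ for all $T_0+1\le t\le T_1$ follows from $A_0+\Delta<\Lambda$, and the improvement condition $D_{T_1}(A_0+\Delta)\le A_1$ follows from $A_0+\Delta<\Lambda$ together with $(1-\gamma)^{T_1-T_0}\Lambda<A_1$. Hence \autoref{prop:concentration} gives $\Pr[\exists t\in[T_0+1:T_1],\ X_t>D_t(A_0+\Delta)\mid X_{T_0}\le A_0]<\delta'$; because $D_t(A_0+\Delta)\le D_t\Lambda=(1-\gamma)^{t-T_0}\Lambda$, the event appearing in the lemma is contained in this one, which yields the first displayed bound, and specializing to $t=T_1$ with $(1-\gamma)^{T_1-T_0}\Lambda<A_1$ gives $\Pr[X_{T_1}>A_1\mid X_{T_0}\le A_0]\le\delta'$, as claimed.
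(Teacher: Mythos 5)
Your proposal is correct and follows essentially the same route as the paper: compute the deviation from the moment profile of \autoref{lem:pca moment details} via \autoref{lem:martingale concentration} using geometric sums, then feed it into \autoref{prop:concentration}, whose pull-out and improvement conditions are exactly $A_0+\Delta<\Lambda$ and $(1-\gamma)^{T_1-T_0}\Lambda<A_1$. The only bookkeeping caveat is that to land on the stated constants you need the slightly sharper geometric bound $\sum_{s=1}^{S}(1-\gamma)^{-2s}\leq(1-\gamma)^{-2S}/(2\gamma)$ (since $1-(1-\gamma)^2=\gamma(2-\gamma)\geq 2\gamma(1-\gamma)$) rather than the $1/\gamma$ bound you quote, together with $\lambda\geq\gap$ to absorb the bounded-difference term into the $94$; with your looser $1/\gamma$ the first term comes out as $\sqrt{1136}$ instead of $\sqrt{568}$, which would not sit below the stated $\Delta$.
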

\begin{proof}

\[
B_{T_0}(t,\Lambda) = \frac{10\gap\gamma\sqrt{\Lambda} + \gamma^2}{4\gap^2(1-\gamma)^{t-T_0}}\, ,~\mu_{T_0}(t,\Lambda)= \frac{\lambda\gamma^2}{2\gap^2 (1-\gamma)^{t-T_0}}\,  \text{, and }
\sigma_{T_0}^2(t,\Lambda) =  \frac{12\gamma^2\lambda\Lambda\gap^2 + 2\gamma^4\lambda}{\gap^4(1-\gamma)^{2(t-T_0)}}.\, 
 \]

Given the moment profile $(B_{T_0},\mu_{T_0},\sigma^2_{T_0})$ in~\autoref{lem:pca moment details}, we apply~\autoref{lem:martingale concentration} and get the following bound for the deviation. 
\begin{align*}
 &2\max\left\lbrace\sqrt{\sum_{t = T_0 + 1}^{T_1}\sigma_{T_0}^2(t, \Lambda)\log\frac{1}{\delta'}}, 2\max_{T_0+1\leq t\leq T_1}B_{T_0}(t,\Lambda)\log\frac{1}{\delta'}\right\rbrace + \sum_{t'=T_0+1}^{T_1}\mu_{T_0}(t, \Lambda)\\
 =&\ \max\left\{2\sqrt{\sum_{t = T_0 + 1}^{T_1}\frac{12\gamma^2\lambda\Lambda\gap^2 + 2\gamma^4\lambda}{\gap^4(1-\gamma)^{2(t-T_0)}}\log\frac{1}{\delta'}} , \frac{10\gap\gamma\sqrt{\Lambda} + \gamma^2}{\gap^2(1-\gamma)^{t-T_0}}\log\frac{1}{\delta'}\right\} + \sum_{t=T_0+1}^{T_1}\frac{\lambda\gamma^2}{2\gap^2 (1-\gamma)^{t-T_0}} \, .\\
  \leq&\ \max\left\{2\sqrt{\frac{12\gamma\lambda\Lambda\gap^2 + 2\gamma^3\lambda}{\gap^4(1-\gamma)^{2(T_1-T_0)}}\log\frac{1}{\delta'}} , \frac{10\gap\gamma\sqrt{\Lambda} + \gamma^2}{\gap^2(1-\gamma)^{T_1-T_0}}\log\frac{1}{\delta'}\right\} + \frac{\lambda\gamma}{\gap^2 (1-\gamma)^{T_1-T_0}} \, .\\
 \leq& \frac{\gamma\lambda\log\frac{1}{\delta'}}{\gap^2 (1-\gamma)^{T_1 - T_0}}\left(\sqrt{\frac{48\gap^2\Lambda}{\gamma\lambda\log\frac{1}{\delta'}}} + \sqrt{\frac{8\gamma}{\lambda\log\frac{1}{\delta'}}} + 12\right)
\end{align*}

Next, by~\autoref{lem:martingale concentration} we get the desiring improvement inequalities.
\end{proof}

Finally, we perform an interval analysis and complete the proof of~\autoref{thm:pca details}.
\begin{proof}
Let $\ell=\infty$ and let $t_0 = 0, a_0 = 1$. For each $i\geq1$, let
\[
\delta_i=\frac{\delta}{2i^2},\ a_i = 2^{-i},\ \gamma_i = \frac{a_{i-1}\gap^2}{1500\lambda \log\frac{1}{\delta_i}},\ t_i = t_{i-1} + \ceil*{\frac{-2}{\log(1 - \gamma_i)}},\ \Lambda_i = 2a_{i-1} \, ,
\]
and $\eta_t=\gamma_i/2\gap$ for every $t_{i-1}+1\leq t\leq t_i$.
Let $T = t_{\ell}$. Observed that due to the choice of the parameters we have $a_\ell \leq \epsilon$ and $1/5\leq(1-\gamma_i)^{t_i-t_{i-1}}\leq1/4$. Now, for each $i=1,2,\dots,\ell$, we invoke~\autoref{lem:pca concentration details} with $A_0=a_{i-1}$, $A_1=a_i$, $T_0=t_{i-1}$, $T_1=t_i$, and $\delta'=\delta_i$.
Let us verify the two conditions. First, we verify the pull out condition as follows.
\begin{align*}
a_{i-1} + \Delta_i
&=a_{i-1} + \frac{\gamma_i\lambda\log\frac{1}{\delta_i}}{\gap^2(1-\gamma_i)^{t_i-t_{i-1}}}\left(\sqrt{\frac{48\gap^2\Lambda}{\gamma\lambda\log\frac{1}{\delta'}}} + \sqrt{\frac{8\gamma}{\lambda\log\frac{1}{\delta'}}} + 12\right)\\
&\leq a_{i-1} + \frac{5a_{i-1}}{1500}\cdot\left(\sqrt{48\times 1500}+13\right)\\
& < 2a_{i-1} < \Lambda_i \, .
\end{align*}
For the improvement condition, we have
\[
 (1-\gamma)^{t_i - t_{i-1}} \cdot 2a_{i-1} < \frac{2a_{i-1}}{4} = a_i \, .
\]
By~\autoref{lem:pca concentration details} and union bounding over the intervals, we have
\[
\Pr[\exists i\in\N,\ t_{i-1}+1\leq t\leq t_i \text{ s.t. } X_{t}> \Lambda_i] < \sum_{i=1}^\infty\delta_i\leq\delta \, .
\]
To have an explicit upper bound for the convergence rate, note that 
\[
\ceil*{\frac{-2}{\log(1 - \gamma_i)}}\leq 1 +\frac{2}{\gamma_i}=\frac{3000\lambda(\log\frac{1}{\delta}+\log 2i^2)\cdot 2^{i}}{\gap^2} \, .
\]
So $t_i \leq \frac{3000\lambda(\log\frac{1}{\delta}+\log 2i^2)2^{i+1}}{\gap^2}$. Now, for every $t\in\N$, let $i=\ceil*{\log\tfrac{t\gap^2}{6000\lambda(\log\tfrac{1}{\delta}+2\log\log(t+2))}}$. We have $t\geq t_{i-1}$. We also have $\Lambda_i=2a_{i-1}=2^{-i-2}\leq\frac{1500\lambda(\log\frac{1}{\delta}+2\log\log(t+2))}{\gap^2 t}$. Finally, the following strong uniform convergence holds.
      \[
\Pr\left[\exists t\in\N,\ X_t>\frac{1500\lambda(\log\tfrac{1}{\delta}+2\log\log (t+1))}{\gap^2t}\right] < \delta \, .
\]
This completes the first part of~\autoref{thm:pca details}.

\end{proof}

\subsubsection{Last iterate convergence}
Let $t_0=0$, $\gamma_1=\frac{\gap^2}{100000\lambda \log\frac{1}{\delta}}$, $t_1=\ceil*{\frac{-2}{\log(1 - \gamma_1)}}$, and $a_1 = 1$. For each $i\geq2$, let
\[
a_i = 2^{-i}a_0,\ \gamma_i = 2^{-i}\gamma_0,\ t_i = 2^it_0 \, ,
\]
and $\eta_t=\gamma_i/2\gap$ for every $t_{i-1}+1\leq t\leq t_i$.

Let us first see some basic properties from the above choice of parameters:
\begin{itemize}
\item For every $i\in\N$, we have
\[
\prod_{t'=t_0+1}^{t_i}(1-2\eta_{t'}\gap)\leq\frac{1}{4^i}=\frac{t_0^2}{t_i^2} \, .
\]
\item For every $i\in\N$, we have
\[
\sum_{t=t_{i-1}+1}^{t_i}\frac{1}{\prod_{t'=t_0+1}^t(1-2\eta_{t'}\gap)}\leq\frac{4^i}{\gamma_i} \, .
\]
\item For every $i\in\N$, we have
\[
\sum_{t=t_{i-1}+1}^{t_i}\frac{1}{\prod_{t'=t_0+1}^t(1-2\eta_{t'}\gap)^2}\leq\frac{4^{2i}}{2\gamma_i} \, .
\]
\end{itemize}

In the analysis for last iterate convergence, we consider a different stopping time $\tau$ defined for the event $\{X_t>\Lambda_t\}$ for some sequence of non-negative thresholds $\{\Lambda_t\}$ chosen properly later. Let $(B_{T_0},\mu_{T_0},\sigma^2_{T_0})$ be the moment profile obtained from~\autoref{lem:pca moment details}. By concentration inequality (i.e.,~\autoref{lem:martingale concentration}), we have
\[
\Pr\left[\exists T_0+1\leq t\leq T_1,\ |M_{t\wedge\tau}|>\Delta\right]<\delta
\]
where
\[
\Delta=2\max\left\{\sqrt{\sum_{t=T_0+1}^{T_1}\sigma^2_{T_0}(t,\Lambda_t)\log\frac{1}{\delta}},\ \max_{T_0+1\leq t\leq T_1}B_{T_0}(t,\Lambda_t)\log\frac{1}{\delta}\right\}+\sum_{t=T_0+1}^{T_1}\mu_{T_0}(t,\Lambda_t) \, .
\]
Let us focus on the case where $T_0=t_0$ and $T_1=t_\ell$ for some $\ell$. The deviation $\Delta$ can be written out as follows.
\begin{align*}
\Delta&\leq2\max\left\{\sqrt{\sum_{t=T_0+1}^{T_1}\frac{\eta_t^2\lambda\cdot(512\Lambda_t^2+624\Lambda_t+512\eta_t^2)}{\prod_{t'=T_0+1}^t(1-2\eta_{t'}\gap)^2}\log\frac{1}{\delta}},\ \max_{T_0+1\leq t\leq T_1}\frac{\eta_t\cdot(20\Lambda_t+12\sqrt{\Lambda_t}+32\eta_t)}{\prod_{t'=T_0+1}^t(1-2\eta_{t'}\gap)}\log\frac{1}{\delta}\right\}\\
&+\sum_{t=T_0+1}^{T_1}\frac{\eta_t^2\lambda\cdot(52\Lambda_t+4)}{\prod_{t'=T_0+1}^t(1-2\eta_{t'}\gap)}\\
&\leq2\max\Bigg\{\sqrt{\sum_{i=1}^\ell\frac{4^{2i}}{2\gamma_i}\cdot\frac{\gamma_i^2\lambda\cdot(512\Lambda_i^2+624\Lambda_i+128\gamma_i^2/\gap^2)}{4\gap^2}\log\frac{1}{\delta}},\\ &\max_{1\leq i\leq\ell}\frac{4^i\gamma_i\cdot(20\Lambda_i+12\sqrt{\Lambda_i}+16\gamma_i/\gap)}{2\gap}\log\frac{1}{\delta}\Bigg\}+\sum_{i=1}^\ell\frac{4^i}{\gamma_i}\cdot\frac{\gamma_i^2\lambda\cdot(52\Lambda_i+4)}{4\gap^2} \, .
\intertext{Let $\Lambda_i=\frac{T_1}{t_i^2}\Lambda=\frac{T_1}{4^it_0^2}\Lambda$ for some $\Lambda$ chosen later, we further have}
&\leq2\sqrt{\sum_{i=1}^\ell\frac{4^{2i}}{2}\cdot\frac{2^{-i}\gamma_0\lambda\cdot\left(512\frac{T_1^2\Lambda^2}{4^{2i}t_0^4}+624\frac{T_1\Lambda}{4^it_0^2}+128\cdot4^{-i}\gamma_0^2/\gap^2\right)}{4\gap^2}\log\frac{1}{\delta}}\\
&+2\max_{1\leq i\leq\ell}\frac{4^i\cdot2^{-i}\gamma_0\cdot\left(20\frac{T_1\Lambda}{4^it_0^2}+12\sqrt{\frac{T_1\Lambda}{4^it_0^2}}+16\cdot\frac{2^{-i}\gamma_0}{\gap}\right)}{2\gap}\log\frac{1}{\delta}\\
&+\sum_{i=1}^\ell\frac{4^i\cdot2^{-i}\gamma_0\lambda\cdot(52\frac{T_1\Lambda}{4^it_0^2}+4)}{4\gap^2} \\
&\leq2\sqrt{\sum_{i=1}^\ell2^{-i}\frac{64\gamma_0\lambda T_1^2\Lambda^2\log\frac{1}{\delta}}{\gap^2t_0^4}}+2\sqrt{\sum_{i=1}^\ell2^i\frac{80\gamma_0\lambda T_1\Lambda\log\frac{1}{\delta}}{\gap^2t_0}}+2\sqrt{\sum_{i=1}^\ell2^i\frac{16\gamma_0^3\lambda\log\frac{1}{\delta}}{\gap^4}}\\
&+\frac{\gamma_0\cdot\left(\frac{20T_1\Lambda}{t_0^2}+12\sqrt{\frac{T_1\Lambda}{t_0^2}}+\frac{16\gamma_0}{\gap}\right)\log\frac{1}{\delta}}{\gap}\\
&+\sum_{i=1}^\ell2^{-i}\frac{13\gamma_0\lambda T_1\Lambda}{\gap^2t_0^2}+2^i\frac{\gamma_0\lambda}{\gap^2}\\
&\leq\frac{T_1\Lambda}{100t_0^2}+\sqrt{2^{i+1}\frac{T_1\Lambda}{1000t_0}}+\sqrt{\frac{2^{i+1}}{1000}}+\frac{T_1\Lambda}{100t_0^2}+\sqrt{\frac{T_1\Lambda}{100t_0^2}}+\frac{\gamma_0}{100\gap}+\frac{T_1\Lambda}{100t_0^2}+\frac{2^{i+1}}{1000} \, .
\intertext{We pick $\Lambda=\frac{1000\lambda\log\frac{1}{\delta}}{\gap^2}$ so that the above can be upper bounded by}
&\leq\frac{T_1\Lambda}{10t_0^2} \, .
\end{align*}

Now, in order to \textit{pull out} the stopping time $\tau$, we have to check the pull-out condition in~\autoref{lem:pull out} as follows.
\[
\Pr[\tau\geq t+1\, |\, M^*_{t}\leq\Delta]=1
\]
for every $T_0+1\leq t\leq T_1$. Observe that when $M_t^*\leq\Delta$, for every $T_0+1\leq t'\leq t$, there is $i$ such that $t_{i-1}+1\leq t'\leq t_i$ by the recursion we have 
\begin{align*}
X_{t'}&=\prod_{t''=T_0+1}^{t'}(1-2\eta_{t''}\gap)(X_{T_0}+M_{t'})\\
&\leq\prod_{t''=T_0+1}^{t_{i-1}}(1-2\eta_{t''}\gap)\cdot2\Delta\\
&\leq\frac{t_0^2}{t_{i-1}^2}\cdot2\Delta\\
&\leq\frac{T_1\Lambda}{5t_{i-1}^2}=\frac{4\Lambda_{t'}}{5}
\end{align*}
and hence $\tau>t'$ by the definition of $\tau$. Since the above holds for all $T_0+1\leq t'\leq t$, we have $\tau\geq t+1$ as desired. Finally, by the pull-out lemma (i.e.,~\autoref{lem:pull out}) we have
\[
\Pr\left[\exists T_0+1\leq t\leq T_1,\ |M_{t}|>\frac{100\lambda T_1\log\frac{1}{\delta}}{\gap^2T_0^2}\right]<\delta \, .
\]
Combine with the recursion, we have
\[
\Pr\left[X_{T_1}>\frac{2000\lambda\log\frac{1}{\delta}}{\gap^2T_1}\right]<\delta \, .
\]

\section{Details on Solving Linear Bandit with SGD Updates}\label{app:ql}
In this subsection, we study linear bandit with SGD dynamic. In stochastic linear bandit, there is a true parameter $\theta_* \in B(0, L_{*}) \subseteq \mathbb{R}^d$ and at each time step $t$ the agent is presented with a decision set $D_t \subseteq B(0, L) \subseteq \mathbb{R}^d$. The agent chooses an action $x_t\in D_t$ and subsequently, the agent observe the reward
\[
y_t = \theta_*^\top x_t + \epsilon_t \, .
\]
where $|\epsilon_t|\leq 1$ and $\Exp[\epsilon_t | x_{1:t}, \epsilon_{1:t-1}] = 0$. We make the bounded assumption of noise term only to simplify the presentation and the sub-Gaussian case can be handled by our framework similarly.

We emphasize that we are considering the incremental update approach, \textit{i.e.} update the estimation of the unknown parameter via SGD instead of solving the linear regression directly (the batch processing approach, \cite{abbasi2011improved,dani2008stochastic} ) . The idea of using an SGD update appeared in \cite{korda2015fast}, but their design of upper confidence bound (UCB) is heuristic, and no regret bound is provided. \cite{jun2017scalable} develops an online-to-confidence-set algorithm to achieve $O(d(T\log ^2 (T) \log (T/\delta))^{1/2})$ regret up to iterated log-factors. They use an online Newton step predictor as a sub-routine to get rid of the dependence on historical data. In contrast, we do not need any sub-routine and update the parameter directly. As a result, we both simplify the procedure and improve the regret bound. The full protocol and algorithm is described below in~\autoref{alg:LinUCB-SGD}.

\vspace{0.5cm}
\begin{algorithm}[H]
  \caption{LinUCB-SGD}
  \label{alg:LinUCB-SGD}
      \textbf{Parameters}: $\lambda > 0$, $\eta=\lambda/L^2$, $\beta_t = 288\max\left\lbrace L_*^2\lambda, \frac{d\lambda}{L^2}\log\left(1 + \frac{T}{d}\right)\log\frac{1}{\delta}\right\rbrace$.\\
      \textbf{Initialize}: $\theta_0 \gets 0$, $V_0 \gets \lambda I$ \\
      \For{round $t=1,\dots,T$}{
      $B_{t} \gets \{\theta :\lVert \theta -\theta _{t-1} \rVert _{V_{t-1}}\le \sqrt{\beta _t}\}$.\\
      Choose $x_t= 
      \underset{x\in D_t}{\text{argmax}}\underset{\theta \in B_t}{\text{max}}\left\langle x,\theta \right\rangle 
$.\\
      Observe the reward $y_t=\left< x_t,\theta _* \right> +\epsilon _t$.\\
      $\theta_{t} \gets \theta_{t-1} + A _t\left( y_t-\theta _{t-1}^{T}x_t \right) x_t $ where $A_t = \eta V_{t-1}^{-1}$.\\
      $V_{t} \gets V_{t-1} + \eta x_tx_t^T$.}
\end{algorithm}
\vspace{0.5cm}

By expanding the SGD update, we have the following dynamics.
\begin{equation}
\label{eq:bandit_update}
\theta_{t} - \theta_* = (I - A_tx_tx_t^\top )(\theta_{t-1} - \theta_*) + \epsilon_tA_tx_t \, .
\end{equation}
The goal is to minimized the regret at time $T$, defined by $R_T = \sum_{t=1}^\top (x_t^* - x_t)^\top \theta_*$, where $x_t^*$ is the optimal action at time $t$. The regret of \autoref{alg:LinUCB-SGD} is bounded by the following.

\begin{theorem}
\label{thm: bandit main regret appendix}
Setting parameters as in \autoref{alg:LinUCB-SGD}, with probability $1-\delta$, for any $\lambda >0$, $L>0$, $L_{*}>0$,
\[
R_T\leq  34\sqrt{2dT\max\left\lbrace L_{*}^2L^2, d\log\left(1 + \frac{T}{d}\right)\log\frac{1}{\delta}\right\rbrace\log\left( 1+\frac{T}{d} \right)} \, .
\]
In particular, we have $R_T=O(d\sqrt{T\log^2 T\log(1/\delta)})$.
\end{theorem}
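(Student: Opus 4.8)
The plan is to run the optimism argument of \cite{abbasi2011improved}, but with the confidence ellipsoids $B_t$ justified by the self-normalized bound on $X_t=\lVert\theta_t-\theta_*\rVert_{V_t}^2$ from the first part of \autoref{thm: bandit main} rather than by a closed-form ridge estimator. First I would condition on the event $\mathcal{E}=\{X_t\le\beta\text{ for all }t\in[T]\}$, where $\beta:=\beta_T=\beta_t$ (the stated $\beta_t$ has no $t$-dependence); by that theorem $\Pr[\mathcal{E}]\ge1-\delta$, and everything below is deterministic on $\mathcal{E}$. On $\mathcal{E}$ we have $\theta_*\in B_t$ for every $t$, since $\lVert\theta_*-\theta_{t-1}\rVert_{V_{t-1}}^2=X_{t-1}\le\beta$.

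Second, I would bound the (non-negative) per-round regret $r_t:=\langle x_t^*-x_t,\theta_*\rangle$ by the confidence width. Let $\widetilde\theta_t:=\arg\max_{\theta\in B_t}\langle x_t,\theta\rangle$; by the choice of $x_t$ in \autoref{alg:LinUCB-SGD} together with $\theta_*\in B_t$, we get $\langle x_t^*,\theta_*\rangle\le\max_{x\in D_t}\max_{\theta\in B_t}\langle x,\theta\rangle=\langle x_t,\widetilde\theta_t\rangle$, hence $r_t\le\langle x_t,\widetilde\theta_t-\theta_*\rangle\le\lVert x_t\rVert_{V_{t-1}^{-1}}\lVert\widetilde\theta_t-\theta_*\rVert_{V_{t-1}}$ by Cauchy--Schwarz in the $V_{t-1}$-inner product. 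Since $\widetilde\theta_t,\theta_*\in B_t$, a triangle inequality gives $\lVert\widetilde\theta_t-\theta_*\rVert_{V_{t-1}}\le2\sqrt{\beta}$, so $r_t\le2\sqrt{\beta}\,\lVert x_t\rVert_{V_{t-1}^{-1}}$ on $\mathcal{E}$.

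Third comes the elliptical-potential step, where the matrix learning rate and the regularizer must be reconciled. Summing with Cauchy--Schwarz over $t$, $R_T=\sum_{t=1}^Tr_t\le\sqrt{T\sum_{t=1}^Tr_t^2}\le2\sqrt{\beta\,T\sum_{t=1}^T\lVert x_t\rVert_{V_{t-1}^{-1}}^2}$. Because $V_{t-1}\succeq\lambda I$, $\lVert x_t\rVert\le L$ and $\eta=\lambda/L^2$, we automatically have $\eta\lVert x_t\rVert_{V_{t-1}^{-1}}^2\le\eta L^2/\lambda=1$; so using $z\le2\log(1+z)$ on $[0,1]$, the identity $\det V_t=\det V_{t-1}\cdot(1+\eta\lVert x_t\rVert_{V_{t-1}^{-1}}^2)$, and $\tr(V_T)\le d\lambda+\eta TL^2$ with AM--GM on the eigenvalues of $V_T$, we obtain $\eta\sum_t\lVert x_t\rVert_{V_{t-1}^{-1}}^2\le2\log\tfrac{\det V_T}{\det V_0}\le2d\log(1+T/d)$, i.e. $\sum_t\lVert x_t\rVert_{V_{t-1}^{-1}}^2\le\tfrac{2dL^2}{\lambda}\log(1+T/d)$. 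Plugging in $\tfrac{\beta L^2}{\lambda}=288\max\{L_*^2L^2,\,d\log(1+T/d)\log\tfrac{1}{\delta}\}$ yields $R_T^2\le 8\beta dTL^2\lambda^{-1}\log(1+T/d)=2304\,dT\log(1+T/d)\max\{L_*^2L^2,\,d\log(1+T/d)\log\tfrac{1}{\delta}\}$, and $\sqrt{2304}=48\le34\sqrt2$ gives the claimed bound.

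The genuinely hard step is the input I am assuming: the uniform self-normalized bound $\Pr[\exists t,\,X_t>\beta]<\delta$ for $X_t=\lVert\theta_t-\theta_*\rVert_{V_t}^2$ under the adaptive SGD update $\theta_t-\theta_*=(I-A_tx_tx_t^\top)(\theta_{t-1}-\theta_*)+\epsilon_tA_tx_t$. Since $x_t$ depends on the entire history, this is precisely the ``chicken and egg'' setting the framework targets: one linearizes $X_t$ in the $V_t$-weighted norm into a contracting dominating term plus an adapted martingale minor term, derives a moment profile for its stopped process against a uniform threshold $\Lambda\asymp\beta$, applies \autoref{lem:martingale concentration}, and invokes the pull-out lemma (\autoref{lem:pull out}) to discharge the stopping time. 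Within the regret derivation itself the only delicate point is the exactness of the $\eta$-bookkeeping in the potential lemma --- in particular the free inequality $\eta\lVert x_t\rVert_{V_{t-1}^{-1}}^2\le1$, which is what lets a single Cauchy--Schwarz over $t$ (instead of a $\min\{1,\cdot\}$ truncation against the crude bound $r_t\le 2LL_*$) already produce the stated constant.
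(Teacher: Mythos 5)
Your proposal is correct and follows essentially the same route as the paper: condition on the uniform bound on $X_t$ from the first part of the theorem to get $\theta_*\in B_t$, bound each per-round regret by $2\sqrt{\beta}\,\lVert x_t\rVert_{V_{t-1}^{-1}}$ via optimism and Cauchy--Schwarz, and control $\sum_t\lVert x_t\rVert_{V_{t-1}^{-1}}^2$ with the determinant/trace elliptical-potential argument (the paper's Lemma on $\sum_i\eta\lVert x_i\rVert_{V_{i-1}^{-1}}^2$), arriving at the same constants. Your explicit handling of $\eta\lVert x_t\rVert_{V_{t-1}^{-1}}^2\le1$ and of the $t$-versus-$T$ bookkeeping in $\beta_t$ only makes explicit what the paper uses implicitly.
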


In order to obtain the above regret bound, we follow the standard approach in~\cite{abbasi2011improved} and study the dynamic of $X_t = \|\theta_t - \theta_*\|_{V_t}^2$ using our framework. Specifically, we will obtain the following theorem.
\begin{theorem}
\label{thm: bandit main appendix}
Setting parameters as in \autoref{alg:LinUCB-SGD}, for any $\lambda > 0, L>0, L_{*}>0, T > 0$, we have
\[
\Pr[\exists t\in [T],\ X_T > 288\max\left\lbrace L_{*}^2\lambda, \frac{d\lambda}{L^2}\log\left(1 + \frac{T}{d}\right)\log\frac{1}{\delta}\right\rbrace] < \delta \, .
\]
\end{theorem}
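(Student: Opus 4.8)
**Proof plan for Theorem (high-probability bound on $X_t = \|\theta_t - \theta_*\|_{V_t}^2$).**

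The plan is to run the three-step framework on $X_t = \|\theta_t - \theta_*\|_{V_t}^2$ exactly as done for SGD and $k$-PCA. \textbf{Step 1 (recursion).} Starting from the dynamic $\theta_t - \theta_* = (I - A_t x_t x_t^\top)(\theta_{t-1}-\theta_*) + \epsilon_t A_t x_t$ with $A_t = \eta V_{t-1}^{-1}$ and $V_t = V_{t-1} + \eta x_t x_t^\top$, I would expand $\|\theta_t - \theta_*\|_{V_t}^2$, using $V_t = V_{t-1} + \eta x_t x_t^\top$ and the identity $A_t V_{t-1} = \eta I$, to obtain a one-step inequality of the form $X_t \le X_{t-1} + (\text{bilinear cross term in }\epsilon_t) + (\text{second-order term})$. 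The key point is that there is \emph{no contracting factor} here (unlike the $(1-c/t)$ in SGD): the matrix $V_t$ is growing, which already provides the ``drift'' that keeps $X_t$ bounded. So rather than unrolling a product, I would telescope directly: $X_T \le X_0 + \sum_{t=1}^T (\text{cross terms}) + \sum_{t=1}^T (\text{second-order terms})$, identifying $D_t$ as the deterministic/drift part (roughly $X_0 = \lambda\|\theta_*\|^2 \le L_*^2\lambda$ plus the accumulated second-order contribution, which telescopes against the $\log\det$ of $V_t$) and $M_t$ as the martingale of cross terms $\sum \epsilon_{t'}\langle \cdot, A_{t'} x_{t'}\rangle$-type quantities. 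The standard elliptical-potential bound $\sum_t \eta x_t^\top V_t^{-1} x_t \le d\log(1 + T/d)$ (up to constants, using $\eta = \lambda/L^2$ and $\|x_t\|\le L$) is what makes the second-order part controllable.

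\textbf{Step 2 (moment/concentration).} Introduce the stopping time $\tau$ for the event $\{X_t > \Lambda\}$ with the uniform threshold $\Lambda := 288\max\{L_*^2\lambda,\ \frac{d\lambda}{L^2}\log(1+\frac Td)\log\frac1\delta\}$, and compute a moment profile $(B,\mu,\sigma^2)$ for the stopped minor process $\{M_{t\wedge\tau}\}$. On the event $\{X_{t-1}\le\Lambda\}$ one has $\|\theta_{t-1}-\theta_*\|_{V_{t-1}}\le\sqrt\Lambda$, hence $|\langle\theta_{t-1}-\theta_*, x_t\rangle| \le \sqrt{\Lambda}\cdot\|x_t\|_{V_{t-1}^{-1}}$ and $|\epsilon_t|\le 1$; this bounds the bounded-difference term by something like $B_t \asymp \sqrt\Lambda\,\eta x_t^\top V_{t-1}^{-1} x_t + (\text{lower order})$, the conditional mean by $\mu_t = 0$ for the pure $\epsilon_t$ cross term (since $\Exp[\epsilon_t\mid\mathcal F_{t-1}]=0$), and the conditional variance by $\sigma_t^2 \asymp \Lambda\,(\eta x_t^\top V_{t-1}^{-1} x_t)$. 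Summing via the elliptical potential lemma gives $\sum_t \sigma_t^2 \lesssim \Lambda\, d\log(1+T/d)$ and similarly for $\sum B_t$. Then Freedman's inequality (\autoref{lem:martingale concentration}) yields $\Pr[\exists t\in[T],\ M_{t\wedge\tau} > \Delta] < \delta$ with $\Delta \lesssim \sqrt{\Lambda\, d\log(1+T/d)\log\frac1\delta} + d\log(1+T/d)\log\frac1\delta\cdot(\text{bounded-diff scale})$.

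\textbf{Step 3 (pull-out).} Feed this into \autoref{lem:pull out} / \autoref{prop:concentration}: the pull-out condition reduces to $D_T + \Delta \le \Lambda$, i.e.\ checking that $L_*^2\lambda + (\text{second-order drift} \lesssim \frac{d\lambda}{L^2}\log(1+T/d)) + \Delta \le \Lambda$. Since $\Delta$ has a $\sqrt{\Lambda}$ and a constant-in-$\Lambda$ piece, this is a self-consistent inequality in $\Lambda$ of the form $c_1 + c_2\sqrt\Lambda \le \Lambda$, which holds once $\Lambda$ exceeds a fixed multiple of $\max\{c_1, c_2^2\}$ — and that is exactly the choice of $\beta_t$ baked into \autoref{alg:LinUCB-SGD}, so the constant $288$ is calibrated to close the loop. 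Concluding, the pull-out lemma removes $\tau$ and the telescoped recursion gives $\Pr[\exists t\in[T],\ X_t > \Lambda] < \delta$, which is the claim.

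\textbf{Main obstacle.} The delicate point is the \emph{adaptivity} of $A_t = \eta V_{t-1}^{-1}$ and the fact that $x_t$ is chosen by the UCB rule using $\theta_{t-1}$ and $\beta_t$ — so the decision set interaction is fully history-dependent. I expect the real work to be in Step 1: showing that the second-order terms $\sum_t \|\epsilon_t A_t x_t\|_{V_t}^2$ and the ``matrix mismatch'' terms coming from $V_t \ne V_{t-1}$ genuinely telescope into (a constant times) $d\log(1 + T/d)\log\frac1\delta$ rather than blowing up, i.e.\ that the matrix learning rate $A_t$ is the right normalization so that this is a clean elliptical-potential sum. Once the recursion is in the standard $X_t \le D_t + M_t$ form with those bounds, Steps 2–3 are the machinery the framework already provides.
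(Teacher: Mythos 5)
Your plan is essentially the paper's proof: the same telescoped recursion $X_t\le X_{t-1}+N_t$ with no contraction (the growth of $V_t$ absorbs the norm change, using $\eta\|x_t\|_{V_{t-1}^{-1}}^2\le 1$), the drift $X_0=\|\theta_*\|_{V_0}^2\le L_*^2\lambda$, a uniform-threshold stopping time at $\Lambda=288\max\{L_*^2\lambda,\tfrac{d\lambda}{L^2}\log(1+\tfrac{T}{d})\log\tfrac1\delta\}$, the elliptical-potential bound $\sum_t\eta\|x_t\|_{V_{t-1}^{-1}}^2\le 2d\log(1+\tfrac{\eta TL^2}{d\lambda})$, and the pull-out lemma closing a self-consistent inequality in $\Lambda$; whether the $\epsilon_t^2$ second-order term is booked as drift (your version) or as the conditional-mean part $\mu_t$ of the minor process (the paper's version) is immaterial, since either way it is bounded deterministically by the same $\log\det$ sum. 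Your ``main obstacle'' is resolved exactly as you anticipate, in the recursion lemma.

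The one place your plan, as written, would not deliver the stated bound is the choice of concentration inequality in Step 2. You invoke \autoref{lem:martingale concentration}, in which the bounded difference enters through its \emph{maximum} $B$: here $B\approx 3\eta\|x_t\|_{V_{t-1}^{-1}}\sqrt\Lambda\le 3\sqrt{\eta\Lambda}$, so Freedman contributes an additive term of order $\sqrt{\eta\Lambda}\log\tfrac1\delta$ to $\Delta$, and the self-consistency $c_1+c_2\sqrt\Lambda\le\Lambda$ then forces $\Lambda\gtrsim\tfrac{\lambda}{L^2}\log^2\tfrac1\delta$ — which is not implied by the stated threshold once $\log\tfrac1\delta\gg d\log(1+\tfrac{T}{d})$, so you would only recover the theorem with an extra $\log^2\tfrac1\delta$ term or in a restricted $\delta$-regime. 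The paper instead applies a Bernstein-type inequality (Theorem 6.2 of the cited Fan et al.) in which the bounded differences enter through $\sum_t B_t^2\lesssim\Lambda\sum_t\eta^2\|x_t\|_{V_{t-1}^{-1}}^2$, which is summable by the same elliptical-potential lemma and hence of the same order as the variance term; this keeps $\Delta\lesssim\sqrt{\tfrac{d\lambda}{L^2}\log(1+\tfrac{T}{d})\Lambda\log\tfrac1\delta}+\tfrac{d\lambda}{L^2}\log(1+\tfrac{T}{d})\sqrt{\log\tfrac1\delta}$ and is what calibrates the constant $288$. So you should replace the max-based Freedman step by a $\sum_t B_t^2$ (or per-step variance-plus-square) form; everything else in your outline goes through as in the paper.
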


\paragraph{Section structure.} In the rest of this section, we provide the recursion analysis in~\autoref{app:ql moment}, the moment and concentration analysis in~\autoref{app:ql moment concentration}, and the improvement analysis in~\autoref{app:ql interval}.

\subsection{Recursion analysis}\label{app:ql moment}
We would like to apply our framework on the quantity $X_t = \|\theta_t - \theta_*\|_{V_t}^2$. We have the following lemma on linearization.
\begin{lemma}[Recursion analysis for stochastic linear bandit with SGD updates]\label{lem: bandit recursion}
Consider the setting in~\autoref{thm: bandit main appendix}. Let $\eta \leq \frac{\lambda}{L^2}$. For all $t\in \N$, we have
\[
X_t \leq X_{t-1} + N_t
\]
where
\[
N_t = 2\eta \epsilon_t(\theta_{t-1} - \theta_*)^\top x_t-2\eta^3\epsilon_t(\theta_{t-1} - \theta_*)^\top x_t\|x_t\|_{V_{t-1}^{-1}}^4 + 2\epsilon_t^2\eta^2\|x_t\|^2_{V_{t-1}^{-1}} \, .
\]

\end{lemma}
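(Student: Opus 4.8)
The plan is to expand $X_t=(\theta_t-\theta_*)^\top V_t(\theta_t-\theta_*)$ directly from the two update rules of \autoref{alg:LinUCB-SGD}, namely $V_t=V_{t-1}+\eta x_tx_t^\top$ and, rewriting \autoref{eq:bandit_update} with $A_t=\eta V_{t-1}^{-1}$,
\[
\theta_t-\theta_*=(\theta_{t-1}-\theta_*)+\eta s_t\,V_{t-1}^{-1}x_t,\qquad s_t:=\epsilon_t-x_t^\top(\theta_{t-1}-\theta_*).
\]
It is convenient to abbreviate $b_t:=x_t^\top(\theta_{t-1}-\theta_*)$ and $a_t:=\|x_t\|_{V_{t-1}^{-1}}^2=x_t^\top V_{t-1}^{-1}x_t$, so that $s_t=\epsilon_t-b_t$. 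First I would split $V_t$ into $V_{t-1}$ and $\eta x_tx_t^\top$ and handle each piece.

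For the $V_{t-1}$-piece, the cross terms collapse because $V_{t-1}^{-1}x_t$ is conjugated by $V_{t-1}$: one has $(V_{t-1}^{-1}x_t)^\top V_{t-1}(\theta_{t-1}-\theta_*)=b_t$ and $(V_{t-1}^{-1}x_t)^\top V_{t-1}(V_{t-1}^{-1}x_t)=a_t$, so
\[
(\theta_t-\theta_*)^\top V_{t-1}(\theta_t-\theta_*)=X_{t-1}+2\eta s_tb_t+\eta^2 s_t^2a_t.
\]
For the $\eta x_tx_t^\top$-piece, since $x_t^\top(\theta_t-\theta_*)=b_t+\eta s_ta_t$, we get $\eta\,(x_t^\top(\theta_t-\theta_*))^2=\eta(b_t+\eta s_ta_t)^2$. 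Adding the two pieces, substituting $s_t=\epsilon_t-b_t$ throughout, and collecting monomials in $\epsilon_t,b_t,a_t$, the mixed terms in $\epsilon_tb_ta_t$ and $\epsilon_tb_t$ cancel pairwise and I expect to be left with
\[
X_t-X_{t-1}-N_t=\eta b_t^2\big((\eta a_t)^2-\eta a_t-1\big)+\eta^2 a_t\epsilon_t^2\,(\eta a_t-1),
\]
with $N_t$ exactly as in the statement; verifying this identity (the constants in particular) is the one genuinely laborious step.

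To conclude I would bound $\eta a_t\le 1$. Since $V_0=\lambda I$ and each step adds $\eta x_tx_t^\top\succeq 0$, we have $V_{t-1}\succeq\lambda I$, hence $a_t\le\|x_t\|_2^2/\lambda\le L^2/\lambda$; combined with the hypothesis $\eta\le\lambda/L^2$ this gives $\eta a_t\le 1$. Then $\eta a_t-1\le0$, and because $0\le(\eta a_t)^2\le\eta a_t\le1\le1+\eta a_t$ we also have $(\eta a_t)^2-\eta a_t-1\le0$; since $b_t^2\ge0$ and $a_t\epsilon_t^2\ge0$, both terms on the right-hand side of the displayed identity are nonpositive, so $X_t\le X_{t-1}+N_t$. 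The main obstacle is purely the bookkeeping in the expansion — tracking and cancelling the mixed $\epsilon_tb_ta_t$-terms — while the only analytic input is $\eta a_t\le1$, which is exactly where $\eta\le\lambda/L^2$ enters (and, implicitly, what motivates the matrix learning rate $A_t=\eta V_{t-1}^{-1}$ in the algorithm).
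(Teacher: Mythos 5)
Your proposal is correct and follows essentially the same route as the paper: a direct expansion of the quadratic form $\|\theta_t-\theta_*\|_{V_t}^2$ using the update $\theta_t-\theta_*=(I-A_tx_tx_t^\top)(\theta_{t-1}-\theta_*)+\epsilon_tA_tx_t$, with the single analytic input $\eta\|x_t\|_{V_{t-1}^{-1}}^2\le 1$ coming from $V_{t-1}\succeq\lambda I$ and $\eta\le\lambda/L^2$. The only difference is bookkeeping: you split $V_t=V_{t-1}+\eta x_tx_t^\top$ and collect everything into one exact identity whose two residual terms are nonpositive, whereas the paper bounds the drift, cross, and noise pieces of the $V_t$-norm separately — the discarded slack terms are identical in both arguments.
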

\begin{proof}
First notice that since $\eta \leq \frac{\lambda}{L^2}$, $\eta \|x_t\|_{V_{t-1}^{-1}}^2\leq 1$.
By \autoref{eq:bandit_update}, we have
\[
\|\theta_t - \theta_*\|_{V_t}^2 =  \|(I - A_tx_tx_t^\top )(\theta_{t-1} - \theta_*)\|_{V_t}^2 + 2\langle (I - A_tx_tx_t^\top )(\theta_{t-1} - \theta_*), \epsilon_tA_tx_t  \rangle_{V_t} + \epsilon_t^2\|A_tx_t\|_{V_t}^2.
\]

So in total we have
\[
X_t \leq X_{t-1} + N_t
\]
where 
\[
N_t = 2\eta\epsilon_t(\theta_{t-1} - \theta_*)^\top x_t-2\eta^3\epsilon_t(\theta_{t-1} - \theta_*)^\top x_t\|x_t\|_{V_{t-1}^{-1}}^4 + 2\epsilon_t^2\eta^2\|x_t\|^2_{V_{t-1}^{-1}} \, .
\]
In particular, 
\[
\|\theta_{t} - \theta_*\|^2_{V_{ t}} \leq \|\theta_{0} - \theta_*\|_{V_{0}}^2 + \sum_{i=1}^{ t}N_i \, .
\]
\end{proof}

\subsection{Moment and concentration analysis}\label{app:ql moment concentration}

We begin with bounding the moment profile of $\{X_i\}_{i=1}^T$.

\begin{lemma}
\label{lem: bandit moments detail}
For $\Lambda>0$, let $\tau$ is the stopping time for the event $\lbrace X_t > \Lambda \rbrace$. For every $T_0+1\leq t\leq T$, the following following functions $(B_{T_0},\mu_{T_0},\sigma^2_{T_0})$ form a moment profile for $\{X_t\}$, $\Lambda$, and $T_0$.
\begin{itemize}
\item (Bounded difference) $|\mathbf{1}_{\tau\geq t}N_t| \leq B_{T_0}(t,\Lambda)= 3\eta \|x_t\|_{V_{t-1}^{-1}}\sqrt{\Lambda} + 2\eta ^2\|x_t\|_{V_{t-1}^{-1}}^2.$
\item (Conditional expectation) $\left|\Exp\left[\mathbf{1}_{\tau\geq t}N_t\, |\, \mathcal{F}_{t-1}\right]\right|\leq \mu_{T_0}(t,\Lambda)= 2\eta^2\|x_t\|_{V_{t-1}^{-1}}^2$.
\item (Conditional variance) $\left|\Var\left[\mathbf{1}_{\tau\geq t}N_t\, |\, \mathcal{F}_{t-1}\right]\right|\leq \sigma^2_{T_0}(t,\Lambda)= 18\eta^2\|x_t\|_{V_{t-1}^{-1}}^2\Lambda + 8\eta^4\|x_t\|_{V_{t-1}^{-1}}^4.$
\end{itemize}
\end{lemma}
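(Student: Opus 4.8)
The plan is to read off the three bounds directly from the explicit formula for $N_t$ in \autoref{lem: bandit recursion}, using the stopping time to replace the otherwise-entangled quantity $X_{t-1}=\|\theta_{t-1}-\theta_*\|_{V_{t-1}}^2$ by the threshold $\Lambda$. First I would record the elementary facts that do all the work. Since $\tau$ is the stopping time for $\{X_t>\Lambda\}$, the event $\{\tau\geq t\}$ lies in $\cF_{t-1}$ and on it $X_{t-1}\leq\Lambda$; by the generalized Cauchy--Schwarz inequality $|(\theta_{t-1}-\theta_*)^\top x_t|\leq\|\theta_{t-1}-\theta_*\|_{V_{t-1}}\,\|x_t\|_{V_{t-1}^{-1}}$, so $\mathbf{1}_{\tau\geq t}\,|(\theta_{t-1}-\theta_*)^\top x_t|\leq\sqrt{\Lambda}\,\|x_t\|_{V_{t-1}^{-1}}$. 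From $V_{t-1}\succeq\lambda I$, $\|x_t\|\leq L$ and $\eta\leq\lambda/L^2$ we get $\eta\|x_t\|_{V_{t-1}^{-1}}^2\leq1$ (this was already used in the proof of \autoref{lem: bandit recursion}). Finally $|\epsilon_t|\leq1$ almost surely and, since $x_t$ is chosen before $\epsilon_t$ is revealed, $\Exp[\epsilon_t\mid\cF_{t-1}]=0$ and $\Exp[\epsilon_t^2\mid\cF_{t-1}]\leq1$. It is also convenient to group the two terms of $N_t$ that are linear in $\epsilon_t$ into
\[
N_t=2\eta\epsilon_t(\theta_{t-1}-\theta_*)^\top x_t\bigl(1-\eta^2\|x_t\|_{V_{t-1}^{-1}}^4\bigr)+2\eta^2\epsilon_t^2\|x_t\|_{V_{t-1}^{-1}}^2,
\]
and to note that $0\leq1-\eta^2\|x_t\|_{V_{t-1}^{-1}}^4\leq1$ by the previous bound.

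With these facts the three estimates are immediate. For the bounded difference, on $\{\tau\geq t\}$ the triangle inequality together with the three facts gives $|N_t|\leq 2\eta\sqrt{\Lambda}\,\|x_t\|_{V_{t-1}^{-1}}+2\eta^2\|x_t\|_{V_{t-1}^{-1}}^2$, which is dominated by the claimed $B_{T_0}(t,\Lambda)$. For the conditional expectation, since $\{\tau\geq t\}$, $\theta_{t-1}$, $V_{t-1}$ and $x_t$ are all $\cF_{t-1}$-measurable, the $\epsilon_t$-linear term contributes $0$ and the remaining term contributes at most $\mathbf{1}_{\tau\geq t}\,2\eta^2\|x_t\|_{V_{t-1}^{-1}}^2\,\Exp[\epsilon_t^2\mid\cF_{t-1}]\leq2\eta^2\|x_t\|_{V_{t-1}^{-1}}^2$. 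For the conditional variance I would bound it by the conditional second moment (valid since $\mathbf{1}_{\tau\geq t}^2=\mathbf{1}_{\tau\geq t}$ and variance is at most second moment), apply $(a+b)^2\leq2a^2+2b^2$ to the two grouped terms, and use the facts above to obtain $\mathbf{1}_{\tau\geq t}N_t^2\leq8\eta^2\Lambda\|x_t\|_{V_{t-1}^{-1}}^2+8\eta^4\|x_t\|_{V_{t-1}^{-1}}^4$, which is again dominated by the stated $\sigma^2_{T_0}(t,\Lambda)$.

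There is no substantive obstacle here; the content of the lemma lies in choosing the right stopping time and in the measurability bookkeeping. The one point to be careful about is to verify that $\{\tau\geq t\}\in\cF_{t-1}$ and that $x_t$ is $\cF_{t-1}$-predictable, so that the indicator, $\theta_{t-1}-\theta_*$, $V_{t-1}$ and $x_t$ all factor out of $\Exp[\,\cdot\mid\cF_{t-1}]$ and only the moments of $\epsilon_t$ remain; and to group the two $\epsilon_t$-linear terms of $N_t$ before bounding, since expanding them separately and bounding term by term also works but yields slightly worse constants than those stated.
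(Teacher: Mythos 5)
Your proposal is correct and follows essentially the same route as the paper: start from the explicit formula for $N_t$ in the recursion lemma, use the stopping time so that on $\{\tau\geq t\}$ one has $\|\theta_{t-1}-\theta_*\|_{V_{t-1}}\leq\sqrt{\Lambda}$, apply the generalized Cauchy--Schwarz inequality together with $\eta\|x_t\|_{V_{t-1}^{-1}}^2\leq1$ and the moment conditions on $\epsilon_t$, and then read off the three bounds. Your grouping of the two $\epsilon_t$-linear terms into a single factor $(1-\eta^2\|x_t\|_{V_{t-1}^{-1}}^4)$ only sharpens the constants relative to the paper's term-by-term bound, and the resulting estimates are dominated by the stated $(B_{T_0},\mu_{T_0},\sigma^2_{T_0})$, so the lemma follows.
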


\begin{proof} By \autoref{lem: bandit recursion} we have 
\[
N_t = 2\eta\epsilon_t(\theta_{t-1} - \theta_*)^\top x_t-2\eta^3\epsilon_t(\theta_{t-1} - \theta_*)^\top x_t\|x_t\|_{V_{t-1}^{-1}}^4 + 2\epsilon_t^2\eta^2\|x_t\|^2_{V_{t-1}^{-1}} \, .
\]

For the first term, we have
\begin{align*}
&\|(I - A_tx_tx_t^\top )(\theta_{t-1} - \theta_*)\|_{V_t}^2\\
=&\ \|(\theta_{t-1} - \theta_*)\|_{V_t}^2 - 2\langle \theta_{t-1} - \theta_*, A_tx_tx_t^\top (\theta_{t-1} - \theta_*) \rangle_{V_t} +  \|A_tx_tx_t^\top (\theta_{t-1} - \theta_*)\|_{V_t}^2\\
=&\ \|(\theta_{t-1} - \theta_*)\|_{V_{t-1}}^2 + \eta ((\theta_{t-1}-\theta_*)^\top x_t)^2 - 2\eta ((\theta_{t-1}-\theta_*)^\top x_t)^2 -2\eta^2((\theta_{t-1}-\theta_*)^\top x_t)^2\|x_t\|_{V_{t-1}^{-1}}^2\\
+&\ \eta^2((\theta_{t-1}-\theta_*)^\top x_t)^2\|x_t\|_{V_{t-1}^{-1}}^2 + \eta^3((\theta_{t-1}-\theta_*)^\top x_t)^2\|x_t\|_{V_{t-1}^{-1}}^4\\
\leq&\ \|(\theta_{t-1} - \theta_*)\|_{V_{t-1}}^2 +  (\eta^3\|x_t\|_{V_{t-1}^{-1}}^4 - \eta^2\|x_t\|_{V_{t-1}^{-1}}^2 - \eta)((\theta_{t-1}-\theta_*)^\top x_t)^2 \\
\leq&\ \|\theta_{t-1} - \theta_*\|_{V_{t-1}}^2 \, .
\end{align*}
For the second term we have
\begin{align*}
    &2\langle (I - A_tx_tx_t^\top )(\theta_{t-1} - \theta_*), \epsilon_tA_tx_t  \rangle_{V_t}\\
    =&\ 2\langle \theta_{t-1} - \theta_*, \epsilon_tA_tx_t  \rangle_{V_t} - 2\langle  A_tx_tx_t^\top (\theta_{t-1} - \theta_*), \epsilon_tA_tx_t  \rangle_{V_t}\\
    =&\  2\eta\epsilon_t(\theta_{t-1} - \theta_*)^\top x_t + 2\eta^2\epsilon_t(\theta_{t-1} - \theta_*)^\top x_t\cdot\|x_t\|_{V_{t-1}^{-1}}^2 - 2\eta^2\epsilon_t(\theta_{t-1} - \theta_*)^\top x_t\cdot\|x_t\|_{V_{t-1}^{-1}}^2\\
    -&\ 2\eta^3\epsilon_t(\theta_{t-1} - \theta_*)^\top x_t\cdot\|x_t\|_{V_{t-1}^{-1}}^4\\
    =&\  2\eta\epsilon_t(\theta_{t-1} - \theta_*)^\top x_t-2\eta^3\epsilon_t(\theta_{t-1} - \theta_*)^\top x_t\cdot\|x_t\|_{V_{t-1}^{-1}}^4 \, .
\end{align*}
For the third term, we have
\[
\epsilon_t^2\|A_tx_t\|^2_{V_t} = \epsilon_t^2\eta^2\|x_t\|^2_{V_{t-1}^{-1}} + \epsilon_t^2\eta^3\|x_t\|^4_{V_{t-1}^{-1}} \leq 2\epsilon_t^2\eta^2\|x_t\|^2_{V_{t-1}^{-1}} \, .
\]

Putting them together, we can compute the moment profile directly. For the bounded difference by $\eta \|x_t\|_{V_{t-1}^{-1}}^{2} \le \frac{1}{2}$ and Cauchy-Schwarz inequality we have
\[
|\mathbf{1}_{\tau\geq t}N_t| \leq 3\eta\|x_t\|_{V_{t-1}^{-1}}\sqrt{\Lambda} + 2\eta^2\|x_t\|_{V_{t-1}^{-1}}^2 \, .
\]
For the conditional expectation, we have
\[
\left|\Exp\left[\mathbf{1}_{\tau\geq t}N_t\, |\, \mathcal{F}_{t-1}\right]\right| \leq 2\eta^2\|x_t\|_{V_{t-1}^{-1}}^2 \, .
\]
For the conditional variance, we have
\[
\left|\Var\left[\mathbf{1}_{\tau\geq t}N_t\, |\, \mathcal{F}_{t-1}\right]\right| \leq 18\eta^2\|x_t\|_{V_{t-1}^{-1}}^2\Lambda + 8\eta^4\|x_t\|_{V_{t-1}^{-1}}^4 \, .
\]
\end{proof}

Notice that we need to bound $\|x_t\|_{V_{t-1}^{-1}}^2$. So we begin with following helper lemma. 

\begin{lemma}[Log-determinant Lemma, see Lemma 11 in \cite{abbasi2011improved}]
\label{lem:sum_x}
We have
\[
\log \left(\frac{\det(V_{ t})}{\det(V_{0})}\right) \leq \sum_{i=1}^{t}\eta\|x_i\|_{V_{i-1}^{-1}}^2 \leq 2\log \left(\frac{\det(V_{t})}{\det(V_{0})}\right) \le 2d\log \left( 1+\frac{\eta tL^2}{d\lambda} \right) \, .
\]
\end{lemma}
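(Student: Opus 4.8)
The plan is to reduce the whole statement to the matrix determinant lemma together with two elementary scalar inequalities. First I would note that $V_i = V_{i-1} + \eta x_i x_i^\top$ is a rank-one update of the positive definite matrix $V_{i-1}$, so the matrix determinant lemma gives
\[
\det(V_i) = \det(V_{i-1})\bigl(1 + \eta\, x_i^\top V_{i-1}^{-1} x_i\bigr) = \det(V_{i-1})\bigl(1 + \eta\|x_i\|_{V_{i-1}^{-1}}^2\bigr).
\]
Telescoping over $i = 1,\dots,t$ yields $\det(V_t)/\det(V_0) = \prod_{i=1}^t (1 + \eta\|x_i\|_{V_{i-1}^{-1}}^2)$, and taking logarithms gives the exact identity
\[
\log\!\left(\frac{\det(V_t)}{\det(V_0)}\right) = \sum_{i=1}^t \log\bigl(1 + \eta\|x_i\|_{V_{i-1}^{-1}}^2\bigr).
\]

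From this identity the first inequality of the lemma follows immediately from $\log(1+u) \le u$ applied termwise. For the second inequality I would first record the a priori bound $\eta\|x_i\|_{V_{i-1}^{-1}}^2 \le 1$: since $V_{i-1} \succeq V_0 = \lambda I$ and $\|x_i\| \le L$, we have $\|x_i\|_{V_{i-1}^{-1}}^2 \le L^2/\lambda$, and the choice $\eta \le \lambda/L^2$ in~\autoref{alg:LinUCB-SGD} makes the product at most $1$. On the interval $[0,1]$ one has the elementary inequality $u \le 2\log(1+u)$ (by concavity of $\log(1+u)$, its graph lies above the chord joining $(0,0)$ and $(1,\log 2)$, so $\log(1+u) \ge u\log 2 \ge u/2$). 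Applying this termwise to the identity above gives $\sum_{i=1}^t \eta\|x_i\|_{V_{i-1}^{-1}}^2 \le 2\log(\det(V_t)/\det(V_0))$.

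For the last inequality I would bound $\det(V_t)$ by AM--GM on its eigenvalues, $\det(V_t) \le (\tr(V_t)/d)^d$, and then use $\tr(V_t) = \tr(\lambda I) + \eta\sum_{i=1}^t\|x_i\|^2 \le d\lambda + \eta t L^2$ together with $\det(V_0) = \lambda^d$ to conclude $\det(V_t)/\det(V_0) \le (1 + \eta t L^2/(d\lambda))^d$, hence $2\log(\det(V_t)/\det(V_0)) \le 2d\log(1 + \eta t L^2/(d\lambda))$.

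There is essentially no obstacle here: this is the classical elliptical-potential (determinant--trace) lemma, and the only steps needing a little care are verifying $\eta\|x_i\|_{V_{i-1}^{-1}}^2 \le 1$ so that $u \le 2\log(1+u)$ applies, and keeping track of how $V_0 = \lambda I$ enters — contributing the $d\lambda$ term in the trace bound and the $\lambda^d$ normalization of the determinant.
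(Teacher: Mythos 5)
Your proposal is correct and follows essentially the same route as the paper's proof: the matrix determinant lemma and telescoping give $\log(\det(V_t)/\det(V_0))=\sum_{i=1}^t\log(1+\eta\|x_i\|_{V_{i-1}^{-1}}^2)$, the two inequalities $\log(1+u)\le u$ and $u\le 2\log(1+u)$ (valid since $\eta\|x_i\|_{V_{i-1}^{-1}}^2\le 1$) give the first two bounds, and the determinant--trace (AM--GM) argument gives the last. Your only addition is to spell out the verification that $\eta\|x_i\|_{V_{i-1}^{-1}}^2\le 1$, which the paper leaves implicit (it is recorded earlier in the recursion lemma via $\eta\le\lambda/L^2$).
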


\subsection{Improvement Analysis}\label{app:ql interval}

Now we can bound the deviation induced by the noise. 
\begin{lemma}[Improvement analysis for linear bandit with SGD updates]\label{lem:bandit concentration details}
Consider the setting in~\autoref{thm: bandit main appendix}. For every $A_0>A_1>0$, $\delta'>0$, $\Lambda>0$, and $1\leq T_0<T_1\in\N$, let
\[
\Delta=\sqrt{144\eta d\log\left(1 + \frac{\eta T_1L^2}{d\lambda}\right)\Lambda\log\frac{1}{\delta'}} + 16\eta d\log\left(1 + \frac{\eta T_1L^2}{d\lambda}\right)\sqrt{\log\frac{1}{\delta'}}  \, .
\]
Suppose that we have $A_0 + \Delta < \Lambda$ and $\Lambda < A_1$. Then, 
\[
\Pr\left[\max_{T_0 + 1\leq t\leq T_1}X_t >  \Lambda \ \middle| \ X_{T_0} \leq A_0\right] < \delta' \, .
\]
In particular, the above implies $\Pr[X_{T_1}>A_0\ |\ X_{T_0}\leq a]\leq\delta'$.
\end{lemma}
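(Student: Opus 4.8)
The plan is to execute the last two steps of the framework on the additive recursion $X_t \le X_{T_0} + M_t$ furnished by \autoref{lem: bandit recursion} — note that here the multiplicative factor is trivial, $D_t \equiv 1$, since the one-step recursion has no contraction — feeding in the moment profile of \autoref{lem: bandit moments detail} and then invoking the pull-out lemma; equivalently, this is a direct instantiation of \autoref{prop:concentration} with $D_t\equiv1$.

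First I would fix $\Lambda$, let $\tau$ be the stopping time for $\{X_t>\Lambda\}$, and apply \autoref{lem:martingale concentration} to the stopped process $\{M_{t\wedge\tau}\}_{T_0\le t\le T_1}$ with the profile $(B_{T_0},\mu_{T_0},\sigma^2_{T_0})$ of \autoref{lem: bandit moments detail}. This yields $\Pr[\max_{T_0<t\le T_1} M_{t\wedge\tau} > \Delta_0] < \delta'$, where $\Delta_0$ is the usual combination of $\sqrt{\sum_t\sigma^2_{T_0}(t,\Lambda)\log(1/\delta')}$, a bounded-difference term, and $\sum_t\mu_{T_0}(t,\Lambda)$. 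The crucial simplification is that every one of these sums is a constant multiple of $\sum_t \eta\|x_t\|^2_{V_{t-1}^{-1}}$ or $\sum_t \eta^2\|x_t\|^4_{V_{t-1}^{-1}}$; using $\eta\|x_t\|^2_{V_{t-1}^{-1}}\le1$ (which holds because $\eta\le\lambda/L^2$) to collapse fourth powers into second powers, and then the elliptical-potential bound $\sum_{t\le T_1}\eta\|x_t\|^2_{V_{t-1}^{-1}} \le 2d\log(1+\eta T_1 L^2/(d\lambda))$ from \autoref{lem:sum_x}, I would reduce $\Delta_0$ to the closed form $\Delta$ in the statement: the square-root term comes from the variance sum ($\propto \eta d\log(\cdots)\Lambda\log(1/\delta')$) and the linear term from the bounded-difference and expectation sums ($\propto \eta d\log(\cdots)$). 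A mild technical point is that the increment bounds $B_{T_0}(t,\Lambda)$ are data-dependent, so I would either pass to a uniform a.s.\ bound via $\|x_t\|^2_{V_{t-1}^{-1}}\le L^2/\lambda$, or (cleaner) use the $\sum_t B_{T_0}(t,\Lambda)^2$ form of the concentration bound, which is again governed by \autoref{lem:sum_x}.

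Next comes the pull-out step, which is especially clean because the recursion is additive. Conditioned on $\{X_{T_0}\le A_0\}$, if $M_{t'}\le\Delta$ for every $t'\in[T_0+1,t]$ then $X_{t'} = X_{T_0}+M_{t'}\le A_0+\Delta$, and the hypothesis $A_0+\Delta<\Lambda$ forces $X_{t'}<\Lambda$, i.e.\ $\tau\ge t+1$. Thus the pull-out condition of \autoref{lem:pull out} holds, and the concentration bound transfers from $\{M_{t\wedge\tau}\}$ to $\{M_t\}$ itself: $\Pr[\max_{T_0<t\le T_1}M_t>\Delta \mid X_{T_0}\le A_0]<\delta'$. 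On the complementary event, $X_t = X_{T_0}+M_t \le A_0+\Delta < \Lambda$ for all $t\in[T_0+1,T_1]$, which is exactly the asserted bound; the ``in particular'' clause is then immediate since $\Lambda<A_1$ gives $\{X_{T_1}>A_1\}\subseteq\{\max_t X_t>\Lambda\}$.

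I expect the only real work — routine rather than conceptual — to be the deviation bookkeeping of the second paragraph: expanding the three pieces of the moment profile, squaring $B_{T_0}$, applying \autoref{lem:sum_x}, and matching the numerical constants $144$ and $16$. The one genuinely substantive ingredient is that $\sum_t\eta\|x_t\|^2_{V_{t-1}^{-1}}$ admits a bound \emph{independent of the adaptive action choices} $x_1,\dots,x_{T_1}$ — this is what makes the moment profile usable in the presence of full history-dependence — and it is precisely the content of \autoref{lem:sum_x}.
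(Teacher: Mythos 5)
Your proposal matches the paper's proof essentially step for step: the paper likewise applies a martingale concentration bound of the $\sum_t B^2$ form (Theorem 6.2 of the cited Fan et al.\ reference) to the stopped noise sum with the moment profile of \autoref{lem: bandit moments detail}, collapses the fourth-power terms via $\eta\|x_t\|_{V_{t-1}^{-1}}^2\le 1$, invokes \autoref{lem:sum_x} to obtain the stated $\Delta$ with constants $144$ and $16$, and then concludes via \autoref{prop:concentration} with trivial contraction, which is exactly the pull-out argument you spell out explicitly. No gaps; your treatment of the data-dependent increments and of the additive ($D_t\equiv 1$) pull-out step is precisely what the paper does, just written out in more detail.
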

\begin{proof}
Given the moment profile $(B_{T_0},\mu_{T_0},\sigma^2_{T_0})$ in \autoref{lem: bandit moments detail}, we apply a martingale concentration inequality (see Theorem 6.2 in \cite{Fan2006}) to obtain the deviation of $\sum_{t=T_0}^{T_1\wedge\tau}N_t$ as follows.
\begin{align*}
 &\sqrt{\sum_{t = T_0 + 1}^{T_1}2\sigma_{T_0}^2(t, \Lambda)\log\frac{1}{\delta'} + 2B_{T_0}^2(t,\Lambda)\log\frac{1}{\delta'}} + \sum_{t=T_0+1}^{T_1}\mu_{T_0}(t, \Lambda)\\
 \leq&\ \sqrt{\sum_{t = T_0 + 1}^{T_1}\left(72\eta^2\|x_t\|_{V_{t-1}^{-1}}^2\Lambda + 32\eta^4\|x\|_{V_{t-1}^{-1}}^4\right)\log\frac{1}{\delta'} } + \sum_{t=T_0+1}^{T_1}2\eta ^2\|x_t\|_{V_{t-1}^{-1}}^2\\
 \leq&\ \sqrt{\sum_{t = T_0 + 1}^{T_1}72\eta^2\|x_t\|_{V_{t-1}^{-1}}^2\Lambda\log\frac{1}{\delta'} } + \sqrt{\sum_{t = T_0 + 1}^{T_1}32\eta^4\|x_t\|_{V_{t-1}^{-1}}^4\log\frac{1}{\delta'}}  + \sum_{t=T_0+1}^{T_1}2\eta^2\|x_t\|_{V_{t-1}^{-1}}^2\\
 \leq&\ \sqrt{\sum_{t = T_0 + 1}^{T_1}72\eta^2\|x_t\|_{V_{t-1}^{-1}}^2\Lambda\log\frac{1}{\delta'} } + \sum_{t = T_0 + 1}^{T_1}6\eta^2\|x_t\|_{V_{t-1}^{-1}}^2\sqrt{\log\frac{1}{\delta'}}  + \sum_{t=T_0+1}^{T_1}2\eta^2\|x_t\|_{V_{t-1}^{-1}}^2 \, .
 \intertext{By~\autoref{lem:sum_x}, we have}
 \leq&\ \sqrt{144\eta d\log\left(1 + \frac{\eta T_1L^2}{d\lambda}\right)\Lambda\log\frac{1}{\delta'}} + 16\eta d\log\left(1 + \frac{\eta T_1L^2}{d\lambda}\right)\sqrt{\log\frac{1}{\delta'}} \, .
\end{align*}
We get what we want from \autoref{prop:concentration}.
\end{proof}

Now we are ready to prove \autoref{thm: bandit main appendix}

\begin{proof}
Let $t_0 = 0, t_1 = T, a_0 = \|\theta_*\|_{V_0}^2$ and
\[
a_1 = \Lambda = 288\max\left\lbrace L_{*}^2\lambda, \frac{d\lambda}{L^2}\log\left(1 + \frac{T}{d}\right)\log\frac{1}{\delta}\right\rbrace \, 
\]
by plugging $\eta = \frac{\lambda}{L^2}$. Now, we invoke \autoref{lem:bandit concentration details} with $A_0=a_0$, $A_1=a_1$, $T_0=0$, $T_1=T$, and $\delta'=\delta$.
Let us verify the two conditions. First, we verify the pull out condition. We have
\begin{align*}
a_{0} + \Delta
&\leq L_{*}^2\lambda + \sqrt{144\frac{d\lambda}{L^2}\log\left(1 + \frac{T}{d}\right)\Lambda\log\frac{1}{\delta}} + 16\frac{d\lambda}{L^2}\log\left(1 + \frac{T}{d}\right)\sqrt{\log\frac{1}{\delta}} \\
&= \frac{\Lambda}{288} + \frac{\Lambda}{\sqrt{2}} + \frac{16\Lambda}{288} < \Lambda \, .
\end{align*}
For the improvement condition, we have $\Lambda_1\leq a_1$ trivially. By \autoref{lem:bandit concentration details} this implies that
\[
\Pr\left[\exists t\in [T],\ X_t > 288\max\left\lbrace L_{*}^2\lambda, \frac{d\lambda}{L^2}\log\left(1 + \frac{T}{d}\right)\log\frac{1}{\delta}\right\rbrace\right] < \delta \, .
\]
\end{proof}
Finally we prove the regret bound~\autoref{thm: bandit main regret appendix}.
\begin{proof}
To prove the regret bound, by setting
$$
\beta_t =  288\max\left\lbrace L_{*}^2\lambda, \frac{d\lambda}{L^2}\log\left(1 + \frac{t}{d}\right)\log\frac{1}{\delta}\right\rbrace \, ,
$$ 
\autoref{thm: bandit main appendix} can guarantee with probability $1-\delta$,  $\theta_* \in B_t$. As a result, defining $\tilde{\theta}_t=\underset{\theta \in B_t}{\text{arg}\max}\underset{x\in D_t}{\max}\langle x,\theta \rangle $, $\left< x_{t}^{*},\theta _* \right> \le \langle x_t,\tilde{\theta}_t \rangle  $.
Therefore,
\[
\langle x_{t}^{*}-x_t,\theta _* \rangle \le \langle x_t,\tilde{\theta}_t-\theta _* \rangle \le \lVert x_t \rVert _{V_{t-1}^{-1}}\lVert \tilde{\theta}_t-\theta _* \rVert _{V_{t-1}}\le 2\lVert x_t \rVert _{V_{t-1}^{-1}}\sqrt{\beta _t} \, .
\]
Taking the sum,
\begin{align*}
R_T &\leq \sum_{i=1}^\top  2
\sqrt{\beta_t}\|x_i\|_{V_{i-1}^{-1}} \, .
\intertext{By Cauchy-Schwarz inequality, we have}
&= 2\sqrt{288T\max\left\lbrace L_{*}^2\lambda, \frac{d\lambda}{L^2}\log\left(1 + \frac{T}{d}\right)\log\frac{1}{\delta}\right\rbrace}\sqrt{\sum_{i=1}^\top \|x_i\|_{V_{i-1}^{-1}}^2}\\
&\overset{\left( i \right)}{\le} 34\sqrt{T\max\left\lbrace L_{*}^2L^2, d\log\left(1 + \frac{T}{d}\right)\log\frac{1}{\delta}\right\rbrace}\sqrt{2d\log\left( 1+\frac{T}{d} \right)}\\
&\leq  34\sqrt{2nT\max\left\lbrace L_{*}^2L^2, d\log\left(1 + \frac{T}{d}\right)\log\frac{1}{\delta}\right\rbrace\log\left( 1+\frac{T}{d} \right)}
\end{align*}
where $(i)$ is by Lemma~\ref{lem:sum_x}.
\end{proof}

\end{document}